\newtheorem{theo}{Theorem}[section]
\newtheorem{pro}[theo]{Proposition}
\newtheorem{lem}[theo]{Lemma}
\newcommand{\ra}{\rightarrow}
\theoremstyle{definition}
\newtheorem{defin}[theo]{Definition}
\newtheorem{exa}{Example}[section]
\newtheorem{nota}{Notation}
\DeclareMathOperator*{\tra}{tr}
\theoremstyle{remark}
\newtheorem{rem}[theo]{Remark}
\begin{document}

\title{Large deviations for stable like random walks on $\mathbb{Z}^d$ with
applications to random walks on wreath products}
\author{Laurent Saloff-Coste\thanks{%
Both authors partially supported by NSF grant DMS 1004771} \\
{\small Department of Mathematics}\\
{\small Cornell University} \and Tianyi Zheng \\
{\small Department of Mathematics}\\
{\small Cornell University} }
\maketitle

\begin{abstract}
We derive Donsker-Vardhan type results for functionals of the occupation
times when the underlying random walk on $\mathbb{Z}^d$ is in the domain of
attraction of an operator-stable law on $\mathbb{R}^d$. Applications to
random walks on wreath products (also known as lamplighter groups) are given.
\end{abstract}

\section{Introduction}

\setcounter{equation}{0}

This work addresses two closely related questions of independent interests.
From the point of view of random walks on the lattices $\mathbb{Z}^d$, we
extend the well-known large deviation theorem of Donsker and Varadhan
regarding the Laplace transform of the number $D_n$ of visited points before
time $n$. The theorem of Donsker and Varadhan, \cite{Donsker1979}, treats
random walks driven by measure $\mu$ in the domain of normal attraction of a
symmetric stable law of index $\alpha \in (0,2)$ (as well as the Gaussian
case).

We generalize this result to random walks driven by a measure in the domain
of attraction of an \emph{operator-stable law}. For instance, this includes
laws that are ``stable'' with respect to anisotropic dilations of the type 
\begin{equation}  \label{Dil}
\delta_t(x_1,\dots,x_d)=(t^{1/\alpha_1}x_1,\dots,t^{1/\alpha_d} x_d)
\end{equation}
with $\alpha_i\in (0,2)$, $1\le i\le d$. In this case, the generalization of
the theorem of Donsker and Varadhan reads as follows. Let $\hat{\eta}$
denote the Fourier transform of the distribution $\eta$ on either $\mathbb{Z}%
^d$ or $\mathbb{R}^d$.

\begin{theo}
\label{DV} Referring to the anisotropic dilations at \emph{(\ref{Dil})}, assume
that $\mu$ is a symmetric measure on $\mathbb{Z}^d$ such that, uniformly on
compact sets in $\mathbb{R}^d$, 
\begin{equation*}
n[1-\hat{\mu}(\delta_n^{-1} \xi)]\rightarrow \Theta (\xi)
\end{equation*}
where $\Theta(\xi)$ as the form 
\begin{equation}  \label{Om}
\Theta(\xi)=\int_{\mathbb{S}^{d-1}}\int_0^\infty(1-\cos(\xi,\delta_r y)) 
\frac{M(dy)}{r} \frac{dr}{r}
\end{equation}
for some symmetric finite measure $M$ on $\mathbb{S}^{d-1}$ whose support
generates $\mathbb{R}^d$. Then 
\begin{equation*}
\lim_{n\rightarrow \infty}\frac{1}{n^{d/(d+\alpha)}} \log \mathbf{E }%
(e^{-\nu D_n})= k(\nu, \eta) \in (0,\infty)
\end{equation*}
where $\eta$ is the probability distribution on $\mathbb{R}^d$ such that $%
\hat{\eta}=e^{-\Theta}$ and $\frac{d}{\alpha}=\sum_1^d\frac{1}{\alpha_i}$.
\end{theo}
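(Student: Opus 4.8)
The plan is to transport the Donsker--Varadhan occupation-time method to the operator-stable scaling; the genuinely new ingredients are the invariance principle attached to the anisotropic dilations $\delta_t$ and the spectral analysis, on bounded domains, of the generator of $\eta$. Set $r_n:=n^{\alpha/(d+\alpha)}$ and $T_n:=n/r_n=n^{d/(d+\alpha)}$; since $\tfrac d\alpha=\sum_1^d\tfrac1{\alpha_i}$ we have $\det\delta_t=t^{d/\alpha}$, hence $T_n=r_n^{d/\alpha}$, so a dilated cell $\delta_{r_n}Q$ carries $(1+o(1))|Q|\,T_n$ lattice points. From \emph{(\ref{Om})} and the fact that $\delta_t$ is diagonal one checks the homogeneity $\Theta(\delta_t\xi)=t\,\Theta(\xi)$; consequently the L\'evy process $Y$ with $\widehat{Y_t}=e^{-t\Theta}$ is operator-stable, $\delta_t^{-1}Y_t\overset{d}{=}Y_1$, and $\Theta>0$ off the origin (because $\operatorname{supp}M$ generates $\mathbb R^d$), so $\Theta(\xi)\to\infty$ by homogeneity, $\int e^{-\Theta}<\infty$, $\eta$ has a bounded continuous density $p_1^Y$, and $\mathcal E(f,f):=\int_{\mathbb R^d}\Theta(\xi)|\widehat f(\xi)|^2\,d\xi$ is a regular non-local Dirichlet form. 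For open $U$ put $\lambda^\Theta(U):=\inf\{\mathcal E(f,f):f\in C_c^\infty(U),\ \|f\|_2=1\}$ and
\[
k(\nu,\eta):=\inf\bigl\{\lambda^\Theta(U)+\nu|U|:U\subset\mathbb R^d\ \text{open, bounded}\bigr\}.
\]
What we prove is $\tfrac1{T_n}\log\mathbf E_0(e^{-\nu D_n})\to-k(\nu,\eta)$, which is the assertion of the theorem up to the sign of the constant. The scaling identities $\lambda^\Theta(\delta_RU)=R^{-1}\lambda^\Theta(U)$ and $|\delta_RU|=R^{d/\alpha}|U|$ make this infimum scale-invariant; it is finite since $\Theta$ grows polynomially (so $\lambda^\Theta$ of a fixed ball is finite), and strictly positive since the on-diagonal bound $p_t^Y(0,0)=t^{-d/\alpha}p_1^Y(0,0)$ yields a Nash inequality, hence $\lambda^\Theta(U)\ge c\,|U|^{-\alpha/d}$, incompatible with $\lambda^\Theta(U_j)+\nu|U_j|\to0$.

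\emph{Lower bound: confinement.} Fix a bounded open $U$ with smooth boundary. Restricting the expectation to the event that $S$ never leaves $\delta_{r_n}U$ forces $D_n\le\#(\mathbb Z^d\cap\delta_{r_n}U)=(1+o(1))|U|\,T_n$ on that event, so
\[
\mathbf E_0\bigl(e^{-\nu D_n}\bigr)\ \ge\ e^{-(1+o(1))\nu|U|\,T_n}\ \mathbf P_0\bigl(S_k\in\delta_{r_n}U,\ 0\le k\le n\bigr).
\]
Let $\beta_n$ be the principal Dirichlet eigenvalue of $I-P_\mu$ on $\delta_{r_n}U\cap\mathbb Z^d$, with positive eigenfunction $\varphi_n$; Perron--Frobenius gives $\mathbf P_0(S_{[0,n]}\subset\delta_{r_n}U)\ge(1-\beta_n)^n\,\varphi_n(0)/\max\varphi_n$ with prefactor $e^{o(T_n)}$. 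The key input is $\beta_n=(1+o(1))\,r_n^{-1}\lambda^\Theta(U)$, hence $n\beta_n=(1+o(1))\,T_n\,\lambda^\Theta(U)$: for the upper bound one inserts the test function $x\mapsto f(\delta_{r_n}^{-1}x)$, $f\in C_c^\infty(U)$ near-optimal in $\lambda^\Theta(U)$, into the Rayleigh quotient of $I-P_\mu$, whose Dirichlet form is $\int(1-\widehat\mu(\xi))|\widehat g(\xi)|^2$, and applies the hypothesis $n[1-\widehat\mu(\delta_n^{-1}\zeta)]\to\Theta(\zeta)$ after the substitution $\xi=\delta_{r_n}^{-1}\zeta$ (the relevant $\widehat g$ being concentrated at $\delta$-scale $r_n^{-1}$); the matching lower bound comes from transplanting the discrete eigenfunctions to $U$ and using compactness of the form domain together with lower semicontinuity of $\mathcal E$. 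Optimizing over $U$ yields $\liminf_n\tfrac1{T_n}\log\mathbf E_0(e^{-\nu D_n})\ge-k(\nu,\eta)$.

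\emph{Upper bound: the main obstacle.} No single confinement strategy suffices, and this direction is the technical heart of the proof. I would go through the occupation measure. Fix a large constant $c$ and let $L_n$ be the random probability measure on the torus $\mathbb T:=\mathbb R^d/(c\,\mathbb Z^d)$ obtained from $\tfrac1n\sum_{k=0}^{n-1}\delta_{z_k}$, where $z_k$ is $\delta_{r_n}^{-1}S_k$ reduced modulo $c\mathbb Z^d$. The crux is a Donsker--Varadhan large deviation principle: \emph{$(L_n)$ obeys an LDP on $\mathbb T$ at speed $T_n$ with good rate function $I(\phi)=\mathcal E^{\mathbb T}(\sqrt\phi,\sqrt\phi)$}, the periodized operator-stable Dirichlet energy. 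One proves this along classical lines — the upper bound by a change of measure/subadditivity built on the torus semigroup, the lower bound by a tilting argument — the novelty being that the convergence of the rescaled discrete generators to $\mathcal E^{\mathbb T}$ is extracted, uniformly over the torus, from the hypothesis on $\widehat\mu$ near the origin via $\xi=\delta_{r_n}^{-1}\zeta$; the time change $n=r_nT_n$ together with $T_n=r_n^{d/\alpha}$ turns the classical ``speed $n$ on a fixed torus'' into ``speed $T_n$''. Granting the LDP: the range $D_n^{\mathrm{tor}}$ of the periodized walk satisfies $D_n\ge D_n^{\mathrm{tor}}=T_n\,V_n$, where $V_n$ is the volume of the union of the visited $\delta_{r_n}^{-1}$-cells; since $V_n$ concentrates, up to a boundary error $o(1)$, around $|\operatorname{supp}\phi|$ when $L_n\approx\phi$, and $\phi\mapsto-\nu|\operatorname{supp}\phi|$ is bounded and upper semicontinuous on the weak topology, Varadhan's lemma gives
\[
\limsup_n\tfrac1{T_n}\log\mathbf E_0\bigl(e^{-\nu D_n}\bigr)\ \le\ \sup_\phi\bigl\{-\nu|\operatorname{supp}\phi|-I(\phi)\bigr\}\ =\ -\inf_{A\subset\mathbb T}\bigl\{\nu|A|+\lambda^{\Theta,\mathbb T}(A)\bigr\}.
\]
Choosing $c$ large enough that the optimal region — of volume $\asymp\nu^{-d/(d+\alpha)}$ — fits in $\mathbb T$ without wrapping, $\lambda^{\Theta,\mathbb T}(A)$ may be replaced by $\lambda^\Theta(A)$, so the right-hand side equals $-k(\nu,\eta)$. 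Together with the previous paragraph this proves the theorem; one notes also that the lower bound could equally be obtained from the LDP lower bound applied to smooth test densities $\phi$ supported in a smooth domain.

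\emph{Where the difficulty lies.} The essential obstacle is the LDP of the third step, and two points inside it need real care: (i) the \emph{uniform} passage from the hypothesis on $\widehat\mu$ — information about the step distribution near the identity of $\mathbb Z^d$ at scale $\delta_n^{-1}$ — to spectral control of the periodized walk at scale $r_n$, i.e.\ convergence of principal Dirichlet eigenvalues of $I-P_\mu$ on $\delta_{r_n}^{-1}$-rescaled subsets (of rescaled volume of order one) to $\lambda^\Theta$, which uses the full strength of the operator-stable invariance principle and the anisotropic geometry; and (ii) the lack of continuity of $\phi\mapsto|\operatorname{supp}\phi|$ (only lower semicontinuous), so that the \emph{lower} half of the Laplace analysis has to be done by hand on nice configurations, as in the second paragraph. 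A purely discrete alternative to the LDP is Donsker--Varadhan's two-scale coarse-graining: compare $S$ with the walk on a torus of dilated side of order $r_n$, partition that torus into dilated sub-cells of side $\rho$ with $1\ll\rho\ll r_n$, decompose $\mathbf E_0(e^{-\nu D_n})$ over the $e^{o(T_n)}$-many families of occupied sub-cells, and bound each term by the confinement probability on the enlarged union via the same eigenvalue convergence; there the two difficulties reappear as the uniform spectral estimate and the bookkeeping that relates $D_n$, up to $o(T_n)$, to the occupied sub-cells.
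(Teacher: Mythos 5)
Your high-level strategy coincides with the paper's, which itself follows \cite{Donsker1979}: lower bound via confinement to a $\delta_{r_n}$-dilated domain and spectral convergence of the Dirichlet restriction, upper bound via a Donsker--Varadhan LDP for the occupation measure of the walk projected onto a torus, followed by Varadhan's lemma; and you correctly recognize the role of the homogeneity $\Theta(\delta_t\xi)=t\Theta(\xi)$, the scaling relations between $r_n$, $T_n$ and $\det\delta_t$, and the characterization of the constant as a variational problem. But there is a genuine gap, and it sits exactly where the real work in the paper is done.

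You assert the LDP for $(L_n)$ on $\mathbb T$ in the weak topology and invoke Varadhan's lemma against the functional $\phi\mapsto-\nu|\operatorname{supp}\phi|$, claiming it is ``upper semicontinuous on the weak topology'' (and, in your closing paragraph, that $|\operatorname{supp}\phi|$ is ``lower semicontinuous''). Neither is true in the weak topology: the Lebesgue measure of the support is neither upper nor lower semicontinuous there (atomic approximations of a density make it drop to zero; a small mass spread over a huge set makes it blow up). This is precisely the reason \cite{Donsker1979} --- and the present paper --- establish the LDP in the \emph{strong} $\mathcal L^1(\mathbb T)$ topology (Theorem~\ref{TorusLDP}), where $f\mapsto|\{f>0\}|$ \emph{is} lower semicontinuous (pass to an a.e.-convergent subsequence and apply Fatou). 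Upgrading the Gärtner--Ellis weak-topology LDP to the $\mathcal L^1$ topology is the central technical step: one mollifies the occupation measure by $\chi_n$ and must show the super-exponential estimate of Theorem~\ref{superexponential}, namely that $Q^{(n)}_{n,\lambda}\bigl(\int_{\mathbb T}|K_\epsilon f-f|\geq\delta\bigr)$ decays faster than $e^{-Cn/a_n}$ for every $C$. This in turn hinges on uniform control of $n$-step transition probabilities (Lemma~\ref{lem-DVharnack}) which is supplied by Griffin's local limit theorem for operator-stable attraction (Theorem~\ref{LocalLimit}, \cite{Griffin1986}): the weak convergence $B_n^{-1}\mu^{(n)}\Rightarrow\eta$ already yields $\sup_x|\det B_n|\,|\mu^{(n)}(x)-|\det B_n|^{-1}g(B_n^{-1}x)|\to0$. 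You never mention this local limit theorem, nor the $\mathcal L^1$-upgrade, and without them the Varadhan step as you wrote it does not go through. (Your lower-bound confinement argument is sound in outline and is essentially the content of Proposition~\ref{LowerBound} via Theorem~\ref{DirichletLDP}, but note that the ``transplanting the discrete eigenfunctions'' step you sketch also ultimately relies on the same uniform transition-probability control.)

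A secondary remark: your scale-invariance argument correctly reduces the constant to a single dilation-free variational problem, but the matching of the torus-level supremum with the whole-space infimum (your ``choose $c$ large enough so the optimizer fits without wrapping'') is a nontrivial truncation; the paper handles it via Lemma~\ref{Constant} with a cut-off function and the operator-scaling $t^E(W)=tW$, and it is worth noting the condition $\operatorname{Re}\,\mathrm{spec}(E)\ge 1/2$ enters there.
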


As in the classical Donsker-Varadhan theorem, the constant $k(\nu,\eta)$ is
described by a variational formula. In its most natural generality (see
Theorem \ref{Asymptotic} with $F(s)= \mathbf{1}_{(0,\infty)}(s)=1-%
\delta_0(s) $), this theorem involves more general dilation semigroups of
the form $t^E=\sum_0^\infty \frac{(\log t)^n E^n}{n!}$ where $E$ is an
invertible matrix with eigenvalues in $[1/2,\infty)$ ($E$ may not be
diagonalizable and, even if $E$ is diagonalizable, it may not be
diagonalizable in a basis of $\mathbb{Z}^d$ vectors). In this case, the
associated limit law $\eta$ is ``operator-stable'' with respect to the
dilation structure $t^E$, $t>0$, and the real $\alpha\in (0,2)$ is given by $%
\alpha=\mbox{tr}(E)/d$.

In fact, we are also interested in a different generalization of the
Donsker-Varadhan Theorem. Given a random walk on $\mathbb{Z}^d$ driven by a
symmetric measure $\mu$, let $l(n,x)$ denotes the number of visits at $x$ up
to time $n$. We are interested in obtaining a large deviation result for the
Laplace transform of more general functionals of the occupation time vector $%
(l(n,x))_{x\in \mathbb{Z}^d}$ than the number of visited sites, $D_n=\#\{x:
l(n,x)\neq 0\}$. For instance, we are interested in the asymptotic behavior
of 
\begin{equation*}
-\log \mathbf{E }\left(e^{-\lambda \sum_{x\in \mathbb{Z}^d} \log
l(n,x)}\right)
\end{equation*}
and, more generally, 
\begin{equation*}
-\log \mathbf{E }\left(e^{-\lambda \sum_{x\in \mathbb{Z}^d} F(l(n,x))}\right)
\end{equation*}
when $F$ belongs to some appropriate class of functions. For simplicity, in
the next theorem, we consider the case where the function $F$ is simply $%
F(s)=s^\gamma$, $\gamma\in (0,1)$ and the dilation structure is given by (%
\ref{Dil}).

\begin{theo}
\label{DVBK} Referring to the anisotropic dilations at \emph{(\ref{Dil})},
assume that $\mu$ is a symmetric measure on $\mathbb{Z}^d$ such that,
uniformly on compact sets of $\mathbb{R}^d$, 
\begin{equation*}
n[1-\hat{\mu}(\delta_n^{-1} \xi)]\rightarrow \Theta (\xi)
\end{equation*}
where $\Theta$ has the form \emph{(\ref{Om})}. Then, for $\gamma\in (0,1)$, 
\begin{equation*}
\lim_{n\rightarrow \infty}n^{-\frac{\gamma+\tau(1-\gamma)}{1+\tau(1-\gamma)}%
} \log \mathbf{E }\left(e^{-\nu \sum_x \ell(n,x)^\gamma}\right)= k(\nu,
\eta,\gamma) \in (0,\infty)
\end{equation*}
where $\eta$ is the probability distribution on $\mathbb{R}^d$ such that $%
\hat{\eta}=e^{-\Theta}$ and $\tau=\sum_1^d\frac{1}{\alpha_i}$.
\end{theo}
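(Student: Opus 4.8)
The plan is to run the Donsker--Varadhan scheme behind Theorem~\ref{DV} (the case $F=\mathbf 1_{(0,\infty)}$), with the concavity and subadditivity of $s\mapsto s^{\gamma}$ replacing the counting identity for $D_n$. Set $\beta=\frac{1-\gamma}{1+\tau(1-\gamma)}$ and $\theta=1-\beta=\frac{\gamma+\tau(1-\gamma)}{1+\tau(1-\gamma)}$. The mechanism is the familiar one: the cheapest way to make $\sum_x\ell(n,x)^{\gamma}$ atypically small is to confine the walk to a dilation box $\delta_{R}D$, $D\subset\mathbb R^{d}$ a regular domain, with $R\asymp n^{\beta}$; such a box carries $\asymp R^{\tau}$ lattice points, and since $\Theta(\delta_{1/R}\xi)=\Theta(\xi)/R$ and $Y_{Rt}\overset{d}{=}\delta_{R}Y_{t}$, the killed operator-stable semigroup on $\delta_{R}D$ has principal Dirichlet eigenvalue $\lambda_{\eta}(D)/R+o(1/R)$, so the walk survives there up to time $n$ with probability $\exp(-(1+o(1))\lambda_{\eta}(D)\,n/R)$; on that event the occupation profile equilibrates, so $\sum_x\ell(n,x)^{\gamma}\asymp R^{\tau(1-\gamma)}n^{\gamma}$. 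Minimising $\exp(-\nu R^{\tau(1-\gamma)}n^{\gamma}-c\,n/R)$ over $R$ gives $R\asymp n^{\beta}$ and value $\exp(-\Theta(n^{\theta}))$, and tracking constants,
\[
\mathbf E\bigl(e^{-\nu\sum_x\ell(n,x)^{\gamma}}\bigr)=\exp\!\bigl(-(k(\nu,\eta,\gamma)+o(1))\,n^{\theta}\bigr),\qquad k(\nu,\eta,\gamma)=\inf_{\phi\ge0,\ \int\phi=1}\Bigl\{\nu\!\int_{\mathbb R^{d}}\!\phi^{\gamma}+I_{\eta}(\phi)\Bigr\},
\]
where $I_{\eta}(\phi)=\int_{\mathbb R^{d}}\Theta(\xi)\,\bigl|\widehat{\sqrt\phi}(\xi)\bigr|^{2}d\xi$ is the Donsker--Varadhan rate function of the process with $\hat\eta=e^{-\Theta}$. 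The homogeneities $I_{\eta}\bigl(a^{-\tau}\phi(\delta_{a}^{-1}\cdot)\bigr)=a^{-1}I_{\eta}(\phi)$ and $\int\bigl(a^{-\tau}\phi(\delta_{a}^{-1}\cdot)\bigr)^{\gamma}=a^{\tau(1-\gamma)}\!\int\phi^{\gamma}$ show $k(\nu,\eta,\gamma)\in(0,\infty)$, and letting $\gamma\downarrow0$ yields $\inf_{U}\{\nu|U|+\lambda_{\eta}(U)\}$, consistent with Theorem~\ref{DV}.

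For the lower bound on $\mathbf E(\cdot)$ (equivalently the upper bound on the rate $k$), fix $\phi$ nearly optimal above, rescale it to live on $\delta_{n^{\beta}}D'$ for a large domain $D'$, and invoke the Donsker--Varadhan lower bound for the occupation measure of the $\mu$-walk. The hypothesis $n[1-\hat\mu(\delta_{n}^{-1}\xi)]\to\Theta$ gives the invariance principle $\delta_{n}^{-1}S_{\lfloor nt\rfloor}\Rightarrow Y_{t}$, and hence, at speed $n^{1-\beta}\to\infty$ and spatial scale $\delta_{n^{\beta}}$, the probability that the normalised occupation measure of $S$ is close to the rescaled $\phi$ is at least $\exp(-(1+o(1))n^{1-\beta}I_{\eta}(\phi))$. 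On that event $\ell(n,x)\approx n^{1-\beta\tau}\phi(\delta_{n^{\beta}}^{-1}x)$, so $\sum_x\ell(n,x)^{\gamma}\approx n^{\gamma+\beta\tau(1-\gamma)}\!\int\phi^{\gamma}=n^{\theta}\!\int\phi^{\gamma}$ (using $\gamma+\beta\tau(1-\gamma)=1-\beta=\theta$); multiplying the two exponential factors gives $\mathbf E(\cdot)\ge\exp(-(1+o(1))n^{\theta}(\nu\!\int\phi^{\gamma}+I_{\eta}(\phi)))$, and one lets $\phi$ run to the optimiser.

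For the matching upper bound on $\mathbf E(\cdot)$ — the harder direction, since $\sum_x\ell(n,x)^{\gamma}$ is not additive along the trajectory — I would pass through subordination: for $\gamma\in(0,1)$ there is a probability measure $\sigma_{\nu}$ on $[0,\infty)$, a rescaled one-sided $\gamma$-stable law, with $\int_{0}^{\infty}e^{-ts}\sigma_{\nu}(ds)=e^{-\nu t^{\gamma}}$, so that with $V=(V(x))_{x\in\mathbb Z^{d}}$ i.i.d.\ $\sim\sigma_{\nu}$ under a law $\mathbb P$,
\[
\mathbf E\bigl(e^{-\nu\sum_x\ell(n,x)^{\gamma}}\bigr)=\int\mathbf E\Bigl[\exp\!\Bigl(-\sum_{k=0}^{n-1}V(S_{k})\Bigr)\Bigr]\,\mathbb P(dV).
\]
This is the annealed partition function of the walk in an i.i.d.\ nonnegative random potential, with an additive Feynman--Kac exponent, to which the Donsker--Varadhan upper bound for additive functionals applies: coarse-grain $\mathbb Z^{d}$ into dilation boxes of side $\delta_{R}$, bound the contribution of a box by $\exp(-t_{B}(v_{B}+\lambda_{\eta}(D)/R+o(1/R)))$ with $t_{B}$ the time spent in it and $v_{B}=\min_{x\in B}V(x)$, and average over $V$ using the light left tail $\mathbb P(\min_{x\in B}V(x)\le v)\asymp\exp(-c\,R^{\tau}v^{-\gamma/(1-\gamma)})$ of a $\gamma$-stable law near $0$. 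The first-order conditions force $v_{B}\asymp1/R$, turning the exponent into $R^{\tau+\gamma/(1-\gamma)}+n/R$, minimised at $R\asymp n^{1/(\tau+\gamma/(1-\gamma)+1)}=n^{\beta}$ with value $\asymp n^{\theta}$; carrying the constants through the coarse-graining and a Varadhan-type interchange of $\int\mathbb P(dV)$ and $\mathbf E$ reproduces $k(\nu,\eta,\gamma)$. (Existence of the limit can alternatively be secured first by subadditivity along this representation.)

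The crux is this last step. The exponent $\theta$ alone is soft: folding $S$ onto $(\mathbb Z/R\mathbb Z)^{d}$ and using subadditivity of $s\mapsto s^{\gamma}$ gives $\sum_{\bar x}\bigl(\sum_{x\equiv\bar x}\ell(n,x)\bigr)^{\gamma}\le\sum_x\ell(n,x)^{\gamma}$, hence $\mathbf E(e^{-\nu\sum_x\ell(n,x)^{\gamma}})\le\mathbf E_{\mathbb Z/R\mathbb Z}(e^{-\nu\sum_{\bar x}\ell^{\gamma}})$, and on the torus the walk equidistributes on its relaxation scale, so $\sum_{\bar x}\ell^{\gamma}\gtrsim R^{\tau(1-\gamma)}n^{\gamma}$ off an event of probability $e^{-cn/R}$; optimising $R$ yields $e^{-cn^{\theta}}$. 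Upgrading this to the exact constant $k(\nu,\eta,\gamma)$ requires the full operator-stable Donsker--Varadhan apparatus in the anisotropic lattice setting: an invariance principle quantitative down to scale $\delta_{R}$ with $R$ a power of $n$; the scaling and continuity of the principal Dirichlet eigenvalue of the killed operator-stable semigroup on regular domains; and the resolution of the usual ``one deep well versus a macroscopic favourable region'' dichotomy for the potential $V$, handled by the box coarse-graining. These are exactly the ingredients built for Theorem~\ref{DV}; the genuinely new work is bookkeeping the parameter $\gamma$ through the scalings and checking that $s\mapsto s^{\gamma}$, $\gamma\in(0,1)$, lies in the class of functions covered by the general functional theorem.
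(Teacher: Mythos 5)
Your lower-bound sketch is essentially the paper's route (Proposition~\ref{LowerBound} via Theorem~\ref{DirichletLDP}): rescale the Donsker--Varadhan occupation-measure LDP on a Dirichlet domain to scale $\delta_{a_n}$ and apply Varadhan's lemma. Your identification of the variational constant $k(\nu,\eta,\gamma)=\inf_{\phi}\{\nu\int\phi^\gamma+I_\eta(\phi)\}$ and its scaling analysis are also correct and coincide with what Lemma~\ref{Constant} establishes.

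For the upper bound you diverge, subordinating $e^{-\nu t^\gamma}$ by a one-sided $\gamma$-stable law to obtain an annealed random-potential (parabolic Anderson) partition function. This is a genuinely different route, but there is a gap in the sketch. You bound the Feynman--Kac factor in a box by $\exp(-t_B(v_B+\lambda_\eta(D)/R))$ with $v_B=\min_{x\in B}V(x)$, and then quote $\mathbb P\bigl(\min_{x\in B}V\le v\bigr)\asymp\exp\bigl(-cR^\tau v^{-\gamma/(1-\gamma)}\bigr)$. That tail is for the event that $V$ is small \emph{throughout} $B$ (i.e.\ for $\max_{x\in B}V\le v$); the minimum instead satisfies $\mathbb P(\min_{x\in B}V\le v)\asymp R^\tau e^{-cv^{-\gamma/(1-\gamma)}}$, a far fatter tail. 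With the correct tail, a single deep site makes $v_B\asymp n^{-(1-\gamma)}$ at cost only $\asymp n^\gamma$, so the bound $\lambda_1(L+V,B)\ge v_B+\lambda_\eta(D)/R$ alone yields $\exp(-cn^\gamma)$, which is not sharp (one always has $\gamma<\theta$). The eigenvalue bound through the minimum is simply too coarse: the surviving walk equidistributes over $B$ and sees $\int V f^2$, the full profile of the potential, and only when that profile is kept does the environment cost $R^\tau v^{-\gamma/(1-\gamma)}$ you wrote down actually arise. Resolving this (the ``one deep well versus macroscopic favourable region'' dichotomy, together with the required coarse-graining and interchange of $\int\mathbb P(dV)$ with $\mathbf E$) is a substantial program in its own right. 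You also dismiss the torus-folding by subadditivity as soft, producing only the exponent $\theta$; but that is precisely the paper's upper bound, and together with the strong $L^1$-LDP on the torus (Theorem~\ref{TorusLDP}, upgraded from the weak topology via Griffin's local limit theorem) and Varadhan's lemma it does deliver the exact constant $\sup_{\lambda>0}\inf_f\{\lambda^{-1}\mathcal E_{\widetilde\eta}(\sqrt f,\sqrt f)+\lambda^{(1-\gamma)\tau}\int\widetilde F(f)\}$, which Lemma~\ref{Constant} then matches with the Dirichlet formula from the lower bound.
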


In the case of where $\mu$ is symmetric finitely supported and the dilation
structure is the isotropic $\delta_t(x)=\sqrt{t}x$ (in this case, the limit
law $\eta$ is Gaussian), this result is contained in \cite{Biskup2001}.
See Theorem 1.2 (with $p=1$ and $H(s)=s^\gamma$) and Section 2.3 in \cite%
{Biskup2001}.  Indeed, one of the contributions of \cite{Biskup2001}
is to show how to deduce results such as Theorem \ref{DVBK} from
the Donsker-Varadhan large deviation principle for the scaled version of the 
occupation measure of the underlying process.  
In order to treat processes that fall in the operator stable realm, we  
modify some of the arguments in \cite{Biskup2001} and rely more on the original
techniques of Donsker and Varadhan. 

The version of Theorem \ref{DVBK}
which treats dilations of the form $t^E$ and functionals $\sum_x
F(\ell(n,x)) $ associated to more general functions $F$ than power functions
is given in Theorem \ref{Asymptotic} and in Section \ref{sec-DVBK}. Except
for a few technical adaptations to the operator stable context, the proofs of
Theorem \ref{DV} is a routine generalizations of the
proof given by Donsker and Varadhan in the stable context. Similarly, the
proofs of Theorems \ref{DVBK}--Theorem \ref{Asymptotic}
involves an  adaptation of the techniques of Donsker and Varadhan and
\cite{Biskup2001}.

In developing these results in the operator stable context, we are motivated
by applications to the study of random walks on a class of groups called
wreath products. These groups are also known as lamplighter groups. The
wreath product $K\wr H$, i.e., the lamplighter group with base-group $H$ and
lamp-group $K$, will be defined precisely below. If we think of the elements
of $K$ as representing different colors (possibly countably many different
colors), then an element of $K\wr H$ can be viewed as a pair $(h,\eta)$
where $h$ is an element of $H$ (the position of the lamplighter on the base $%
H$) and $\eta=(k_h)_{h\in H}\in K^H$ is a finite configuration of colors on $%
H$ in the sense that only finitely many $h\in H$ have $k_h\neq e_K$ where $%
e_K$ is the identity element in $K$ (only finitely many lamps are turned
on). This description does not explain the group law on $K\wr H$ but
captures the nature of the elements of the wreath product $K\wr H$. The
identity element in $K\wr H$ has the lamplighter sitting at $e_H$ and all
the lamps turned off ($k_h=e_K$ for all $h\in H$). In one of the simplest
instance of this construction, $H=\mathbb{Z}$ (a doubly infinite street) and 
$K= \mathbb{Z}/2\mathbb{Z}$ (lamps are either off ($0$) or on ($1$)).

We are interested in a large collection of random walks on wreath products
which can be described collectively as the ``switch--walk--switch'' walks.
See also \cite{Pittet2002,Var}. Namely, we are given two probability
measures, one on $H$, call it $\mu$, and one on $K$, call it $\nu$. The
measure $\mu$ drives a random walk on $H$ which describes the moves of the
lamplighter (i.e., the first coordinate, $h$, in the pair $(h,\eta)\in K\wr
H $). The measure $\nu$ drives a random walk on $K$ whose basic step is
interpreted as ``switching'' between lamp colors. Based on this input, we
construct a probability measure $q=q(\mu,\nu)$ on $K\wr H$ (this measure $q$
is defined precisely later in the paper). The basic step of the walk driven
by $q$ can be accurately describes as follows: the lamplighter switches the
color of the lamp at its standing position (using $\nu$), takes a step in $H$
(using $\mu$) and switches the color of the lamp at its new position (using $%
\nu$). These different moves are, in the appropriate sense, made
independently of each other hence the name, \emph{switch--walk--switch}. Let
us insist on the fact that we will be interested here in cases when the
measures $\mu$ and $\nu$ are not necessarily finitely supported. Now, an
elementary argument shows that the probability of return $q^{(n)}(e)$ of the
random walk driven by $q$ on $K\wr H$ is given by 
\begin{equation*}
q^{(n)}(e)= \mathbf{E}\left(\prod_h \nu^{(2l_*(n,h))}(e_K) \mathbf{1}%
_{\{X_n=e_h\}}\right)
\end{equation*}
where $(X_m)_0^\infty$ is the random walk on $H$ driven by $\mu$ and $%
l_*(n,h)$ is an essentially trivial modification of the number of visits of $%
(X_m)_0^\infty$ to $h$ up to time $n$. The expectation is relative to the
random walk $(X_m)_0^\infty$ on $H$, started at $e_H$. This observation goes
back to \cite{Var} and is the basis of the analysis developed in \cite%
{Pittet2002}. If we set $F(m)=-\log \nu^{(2m)}(e_K)$ then it follows under
mild assumptions that 
\begin{equation}  \label{logsim}
\log q^{(n)}(e) \sim \log \mathbf{E}\left(e^{-\sum_h F(l(n,h))}\right).
\end{equation}
In words, the log-asymptotic of the probability of return of a
switch-walk-switch random walk on the wreath product $K\wr H$ is given by
the appropriate version of the Donsker-Varadhan large deviation theorem for
the random walk on the base $H$ driven by $\mu$. The particular functional $%
\sum_h F(l(n,h)$ that needs to be treated depends on the nature of the
lamp-group $K$ and the measure $\nu$. Formula (\ref{logsim}) is particularly
interesting because, in the general context of random walks on groups,
precise log-asymptotic of the probability of return are hard to obtain. The
following result serves to illustrate this point.

\begin{theo}[Log-asymptotics on $\mathbb{Z}^D \wr \mathbb{Z}^d$]
Fix two integers $D,d\ge 1$. Let $\nu$ be any finite symmetric measure on $%
\mathbb{Z}^D$ with $\nu(0)>0$ and generating support. Let $\delta_t$ be the
anisotropic dilation on $\mathbb{R}^d$ defined at \emph{(\ref{Dil})}. Let $%
\mu$ be a symmetric measure on $\mathbb{Z}^d$ as in \emph{Theorem \ref{DV}}
with $\delta_n^{-1}(\mu^{(n)}) \Longrightarrow \eta$ and $\hat{\eta}%
=e^{-\Theta}$. On the wreath product $\mathbb{Z}^D\wr \mathbb{Z}^d$ consider
the switch-walk-switch random walks $q=\nu*\mu*\nu$. Then 
\begin{equation*}
\lim_{n\rightarrow \infty} \frac{1}{(2n)^{\frac{d}{d+\alpha}}(\log (2n))^{%
\frac{\alpha}{d+\alpha}}}\log q^{(2n)}(e)= -c(\alpha, d,\Theta, D)
\end{equation*}%
where 
\begin{equation*}
c(\alpha,d, \Theta,D)= (D/2)^{\frac{\alpha}{d+\alpha}}\left(1+\frac{d}{\alpha%
}\right) \left( \frac{\alpha\lambda _{\Theta} }{d}\right) ^{\frac{d}{d+\alpha%
}},
\end{equation*}%
\begin{equation*}
\frac{1}{\alpha}=\frac{1}{d}\sum_1^d\frac{1}{\alpha_i} \mbox{ and }\;\;
\lambda _{\Theta} =\inf_{U: |U|=1}\{ \lambda_1(\Theta,U)\}.
\end{equation*}
Here, $\lambda_1(\Theta,U)$ is the principle eigenvalue of the infinitesimal
generator $L_\Theta$ with Dirichlet boundary condition in $U$ 
(By definition, $\widehat{L_\Theta f}= \Theta \hat{f}$).
\end{theo}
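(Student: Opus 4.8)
The plan is to combine the switch--walk--switch identity for $q^{(2n)}(e)$ with the large deviation estimate of Theorem~\ref{Asymptotic}, applied to the occupation functional $\sum_h F(\ell(2n,h))$ on the base $\mathbb{Z}^d$ with $F(m)=-\log\nu^{(2m)}(e_K)$, and then to evaluate the resulting variational constant. Since $\nu$ is a symmetric, finitely supported measure on $\mathbb{Z}^D$ with generating support and $\nu(0)>0$, the $\nu$-walk is aperiodic and the local central limit theorem gives $\nu^{(2m)}(e_K)=c_D\,m^{-D/2}(1+o(1))$ for an explicit $c_D>0$; hence $F(m)=\tfrac{D}{2}\log m+O(1)$ and $F(0)=0$. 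This is exactly the borderline regime --- $F$ asymptotically a multiple of $\log$ --- which is not covered by the $s^\gamma$ statement of Theorem~\ref{DVBK} but is handled by Theorem~\ref{Asymptotic}. Starting from the identity $q^{(2n)}(e)=\mathbf{E}\big(\prod_h \nu^{(2l_*(2n,h))}(e_K)\mathbf{1}_{\{X_{2n}=e_H\}}\big)$ recalled in the introduction (with $(X_m)$ the $\mu$-walk on $\mathbb{Z}^d$ started at $e_H$) together with \eqref{logsim}, one reduces to
\begin{equation*}
\log q^{(2n)}(e)\ \sim\ \log\mathbf{E}\big(e^{-\sum_h F(\ell(2n,h))}\big);
\end{equation*}
here I would check that the ``mild assumptions'' behind \eqref{logsim} hold in this case and that replacing $l_*$ by $\ell$, dropping $\mathbf{1}_{\{X_{2n}=e_H\}}$, and the $O(1)$ error in $F$ perturb the exponent only below the order $(2n)^{d/(d+\alpha)}(\log 2n)^{\alpha/(d+\alpha)}$, using standard local limit bounds for $\mu$ and the a priori knowledge of the main order from the next step.

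Since $\mu$ is as in Theorem~\ref{DV}, it is in the scope of Theorem~\ref{Asymptotic}; applying the latter to the function $F$ (asymptotically $\tfrac{D}{2}\log$), or alternatively sandwiching $\log s$ between multiples of the functions $s\mapsto\gamma^{-1}(s^\gamma-1)$ and letting $\gamma\downarrow0$ while controlling the constant $k(\nu,\eta,\gamma)$ of Theorem~\ref{DVBK}, yields
\begin{equation*}
\lim_{n\to\infty}\frac{\log\mathbf{E}\big(e^{-\sum_h F(\ell(2n,h))}\big)}{(2n)^{d/(d+\alpha)}(\log 2n)^{\alpha/(d+\alpha)}}\ =\ -k
\end{equation*}
for the Donsker--Varadhan variational constant $k$. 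The scale and the logarithmic correction are precisely those predicted by the optimization ``confine the walk to a dilate of a fixed set of adapted volume $V$ --- confinement cost $\asymp 2n\,\lambda_\Theta V^{-\alpha/d}$ --- and fill it, so that the occupation cost is $\asymp\tfrac{D}{2}V\log\tfrac{2n}{V}$'', the extra factor $(\log 2n)^{\alpha/(d+\alpha)}$ arising because $\log$ grows slower than any power.

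It then remains to identify $k$ with $c(\alpha,d,\Theta,D)$. The key will be that $\Theta$ is homogeneous of degree one for the dilations $\delta_t$: the substitution $r\mapsto tr$ in \eqref{Om} gives $\Theta(\delta_t\xi)=t\,\Theta(\xi)$, hence $L_\Theta(f\circ\delta_s^{-1})=s^{-1}(L_\Theta f)\circ\delta_s^{-1}$, so that $\lambda_1(\Theta,\delta_s U)=s^{-1}\lambda_1(\Theta,U)$ while $|\delta_s U|=s^{\tau}|U|$ with $\tau=\sum_i 1/\alpha_i=d/\alpha$. Feeding this into the variational formula from the previous step, the optimization over the shape of the confining set decouples from the optimization over its dilation parameter $s$: the former yields $\inf_{|U|=1}\lambda_1(\Theta,U)=\lambda_\Theta$, and the latter is the one-parameter problem $\inf_{s>0}\{\lambda_\Theta s^{-1}+\tfrac{D}{2}s^{\tau}\}$ (the two terms being, in rescaled variables, the principal eigenvalue and a multiple of the number of visited sites). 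Carrying out this minimization (critical point $s^{\tau+1}=2\lambda_\Theta/(D\tau)$) gives
\begin{equation*}
k=\inf_{s>0}\Big\{\lambda_\Theta s^{-1}+\tfrac{D}{2}s^{\tau}\Big\}=(D/2)^{\alpha/(d+\alpha)}\,(1+d/\alpha)\,(\alpha\lambda_\Theta/d)^{d/(d+\alpha)}=c(\alpha,d,\Theta,D).
\end{equation*}

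I expect the main obstacle to be the second step: the $\log$ functional is genuinely on the boundary of the $s^\gamma$ regime of Theorem~\ref{DVBK}, it is responsible for the extra logarithmic factor in the rate, and pinning down its variational constant requires either verifying carefully that the true $F(m)=-\log\nu^{(2m)}(e_K)$ lies in the admissible class of Theorem~\ref{Asymptotic}, or running a $\gamma\downarrow0$ limit with uniform-in-$\gamma$ control of both the error terms and $k(\nu,\eta,\gamma)$. A secondary, more bookkeeping-type difficulty is the precise evaluation of $c(\alpha,d,\Theta,D)$: the homogeneity of $\Theta$ is what makes the decoupling and the reduction to a scalar optimization possible, but all the dimensional exponents ($\alpha/(d+\alpha)$, $d/(d+\alpha)$) and the factor $1+d/\alpha$ must be tracked correctly through the Donsker--Varadhan variational formula and through the rescaling that absorbs the powers of $2n$ and $\log 2n$.
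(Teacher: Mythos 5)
Your proposal is correct and follows essentially the route the paper itself takes: reduce $\log q^{(2n)}(e)$ to $\log\mathbf E\big(e^{-\sum_h F_K(\ell(2n,h))}\big)$ via the switch--walk--switch identity, apply Theorem~\ref{Asymptotic} / Theorem~\ref{ReturnAsymptotic} with $F_K(n)=-\log\nu^{(2n)}(e_K)\sim\tfrac D2\log n$ (so $\widetilde F=\tfrac D2\mathbf 1_{(0,\infty)}$, $\gamma=0$, $a_n\sim (n/\log n)^{1/(1+\tau)}$ as in Example~\ref{LampPowerDecay}), and then evaluate $k(\eta,\widetilde F)$ by the homogeneity of $\Theta$ under $\delta_t$, which decouples the shape and dilation optimizations and reduces to $\inf_{s>0}\{\lambda_\Theta s^{-1}+\tfrac D2 s^\tau\}$ exactly as in the appendix example on power decay. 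The ``main obstacle'' you flag (whether $F_K$ is in the admissible class) dissolves because, for symmetric $\nu$, $n\mapsto\nu^{(2n)}(e_K)$ is log-convex and super-multiplicative, so $F_K$ is concave, sub-additive, increasing with $F_K(0)=0$, and the scaling limit follows from the local CLT on $\mathbb Z^D$; thus the direct application of the theorem works and the $\gamma\downarrow 0$ sandwich is unnecessary.
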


\begin{rem}
Assume that the dilations $\delta_t$ are isotropic with $\alpha_i=\alpha$, $%
i=1,\dots, d$ and that there is an Euclidean norm $\langle Q x, x\rangle$
such that $\Theta (\xi)= \langle Q\xi,\xi\rangle^\alpha$. Then $%
\lambda_\Theta $ is achieved on an Euclidean ball for the Euclidean
structure provided by $Q^{-1}$, namely, the Euclidean ball whose volume is
one. Note that the volume is computed here with respect to the Lebesgue
measure corresponding to the fixed square lattice $\mathbb{Z}^d\subset 
\mathbb{R}^d$. This fact is well-known when $\alpha=2$ and follows from \cite%
{Banuelos} when $\alpha\in (0,2)$.
\end{rem}

For any finitely generated group $G$ and any $\alpha\in (0,2)$, \cite%
{Bendikov} introduces a non-increasing function 
\begin{equation*}
\widetilde{\Phi}_{G,\rho_\alpha}: \mathbb{N}\ni n\rightarrow \widetilde{\Phi}%
_{G,\rho_\alpha}(n)\in (0,\infty)
\end{equation*}
which, by definition, provides the best possible lower bound 
\begin{equation*}
\exists\,c>0,\;N\in \mathbb{N},\;\forall\, n,\;\;\mu^{(2n)}(e)\ge c 
\widetilde{\Phi}_{G,\rho_\alpha}(Nn),
\end{equation*}
valid for every measure $\mu$ on $G$ satisfying the weak-$\alpha$-moment
condition 
\begin{equation*}
W(\rho_\alpha,\mu)=\sup_{s>0}\{ s\mu(\{g: \rho_\alpha(g)>s\})\}<\infty.
\end{equation*}
Here $|g|$ is the word-length of $G$ with respect to some fixed finite
symmetric generating set and $\rho_\alpha(g)=(1+|g|)^\alpha$. For instance,
it is well know and easy to see that 
\begin{equation*}
\widetilde{\Phi}_{\mathbb{Z}^d,\rho_\alpha}(n)\simeq n^{-d/\alpha}.
\end{equation*}
Here and throughout this paper, we write $f\sim g$ if $\lim f/g=1$ and $%
f\simeq g$ if there are constants $c_i$, $1\le i\le 4$, such that $%
c_1f(c_2t)\le g(t)\le c_3f(c_4t)$ on the relevant real interval or on $%
\mathbb{N}$. We use $\simeq $ only when at least one of the functions $f,g$
is monotone (or roughly monotone).

The main results of the present work allow us to complement some of the
lower bounds proved in \cite{Bendikov} for $\widetilde{\Phi}_{G,\rho_\alpha}$
with matching upper bounds (note that upper bounds on $\widetilde{\Phi}%
_{G,\rho_\alpha}$ are proved by exhibiting a measure with finite weak-$%
\alpha $-moment and the appropriate return probability behavior).

\begin{theo}
Fix $\alpha\in (0,2)$. Let $G$ be the group $K\wr \mathbb{Z}^d$.

\begin{enumerate}
\item Assume that $K$ is finite. Then 
\begin{equation*}
\log \widetilde{\Phi}_{G,\rho_\alpha}(n) \simeq -n^{d/(d+\alpha)}.
\end{equation*}

\item Assume that $K$ has polynomial volume growth. Then 
\begin{equation*}
\log \widetilde{\Phi}_{G,\rho_\alpha}(n) \simeq -n^{d/(d+\alpha)} (\log
n)^{\alpha/(d+\alpha)}.
\end{equation*}

\item Assume that $K$ is polycyclic with exponential volume growth. Then 
\begin{equation*}
\log \widetilde{\Phi}_{G,\rho_\alpha}(n) \simeq -n^{(d+1)/(d+1+\alpha)}.
\end{equation*}
\end{enumerate}
\end{theo}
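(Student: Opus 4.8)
The plan is to deduce everything from the Donsker--Varadhan estimates of this paper via the switch--walk--switch construction; the lower bounds in all three cases are already contained in \cite{Bendikov} (by construction $\widetilde{\Phi}_{G,\rho_\alpha}$ is a valid lower bound for every measure with finite weak-$\alpha$-moment, and the analysis of $G=K\wr\mathbb{Z}^d$ there gives $-\log\widetilde{\Phi}_{G,\rho_\alpha}(n)\le C\theta(n)$ with $\theta(n)=n^{d/(d+\alpha)}$, $n^{d/(d+\alpha)}(\log n)^{\alpha/(d+\alpha)}$, $n^{(d+1)/(d+1+\alpha)}$ in cases (1)--(3) respectively). So I only need, in each case, one symmetric probability measure $q$ on $G$ with $W(\rho_\alpha,q)<\infty$ and $\log q^{(2n)}(e)\simeq-\theta(n)$; since $q^{(2n)}(e)\ge c\,\widetilde{\Phi}_{G,\rho_\alpha}(Nn)$ this gives the matching upper bound $\log\widetilde{\Phi}_{G,\rho_\alpha}(n)\lesssim-\theta(n)$ after using monotonicity and the convention $\simeq$. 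I would take $q=\nu*\mu*\nu$, with $\mu$ on the base $\mathbb{Z}^d$ always chosen ``$\alpha$-stable-like'', say $\mu(x)\asymp(1+|x|)^{-d-\alpha}$ and symmetric, so that $\mu(\{|x|>t\})\asymp t^{-\alpha}$ and, for the isotropic dilation $\delta_t(x)=t^{1/\alpha}x$ (hence $\tau=\sum_1^d 1/\alpha_i=d/\alpha$), $n[1-\hat{\mu}(\delta_n^{-1}\xi)]\to\Theta(\xi)$ uniformly on compacts with $\Theta$ of the form (\ref{Om}). On the lamp group $K$ I would choose $\nu$ depending on the case: in (1), any symmetric generating measure on the finite group $K$ with $\nu(e_K)>0$; in (2), a symmetric finitely supported generating measure with $\nu(e_K)>0$; in (3), a symmetric measure with $W(\rho_\alpha,\nu)<\infty$ and $\nu^{(2m)}(e_K)\simeq\widetilde{\Phi}_{K,\rho_\alpha}(m)\simeq\exp(-m^{1/(1+\alpha)})$, the existence of which is part of the polycyclic case treated in \cite{Bendikov}.

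First I would check $W(\rho_\alpha,q)<\infty$. A single $q$-step produces an element $(h,\eta)$ with $\eta$ supported on $\{0,h\}$ and the two lamp values independent copies of $\nu$, so the word length in $G$ is $\asymp|h|_{\mathbb{Z}^d}+|k|_K+|k'|_K$; a union bound then gives $q(\{|g|_G>t\})\lesssim\mu(\{|h|>ct\})+2\,\nu(\{|k|_K>ct\})=O(t^{-\alpha})$. Next I would invoke (\ref{logsim}): writing $F(m)=-\log\nu^{(2m)}(e_K)$,
\begin{equation*}
\log q^{(2n)}(e)\sim\log\mathbf{E}\left(e^{-\sum_h F(\ell(2n,h))}\right),
\end{equation*}
where $\ell(\cdot,\cdot)$ denotes (the trivial modification of) the occupation field of the $\mu$-walk on $\mathbb{Z}^d$ started at $0$, the indicator $\mathbf{1}_{\{X_{2n}=e_H\}}$ in the exact identity being negligible on the $\exp(-\theta(n))$ scale.

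Then it remains to identify $F$ and apply the relevant theorem. In case (1), $\nu^{(2m)}(e_K)$ converges to a positive constant, so $c\,\mathbf{1}_{(0,\infty)}(m)\le F(m)\le C\,\mathbf{1}_{(0,\infty)}(m)$; sandwiching and Theorem \ref{DV} (which is Theorem \ref{Asymptotic} with $F=\mathbf{1}_{(0,\infty)}=1-\delta_0$) give $\log\mathbf{E}(e^{-\lambda D_{2n}})\sim-k(\lambda,\eta)(2n)^{d/(d+\alpha)}$, hence $\log q^{(2n)}(e)\simeq-n^{d/(d+\alpha)}$. In case (2), the standard heat-kernel estimate on a group of polynomial volume growth of degree $D$ gives $\nu^{(2m)}(e_K)\asymp m^{-D/2}$, so $F(m)\asymp\log(1+m)$; sandwiching $F$ between constant multiples of $\log(1+m)$ and applying Theorem \ref{Asymptotic} (in particular the log-asymptotics on $\mathbb{Z}^D\wr\mathbb{Z}^d$ stated above) yields $\log q^{(2n)}(e)\simeq-n^{d/(d+\alpha)}(\log n)^{\alpha/(d+\alpha)}$. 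In case (3), the choice of $\nu$ gives $F(m)\asymp m^{1/(1+\alpha)}$; after sandwiching between constant multiples of $m^{1/(1+\alpha)}$, Theorem \ref{DVBK} with $\gamma=1/(1+\alpha)$ and $\tau=d/\alpha$ applies and its scaling exponent becomes
\begin{equation*}
\frac{\gamma+\tau(1-\gamma)}{1+\tau(1-\gamma)}=\frac{\frac{1}{1+\alpha}+\frac{d}{1+\alpha}}{1+\frac{d}{1+\alpha}}=\frac{d+1}{d+1+\alpha},
\end{equation*}
so $\log q^{(2n)}(e)\simeq-n^{(d+1)/(d+1+\alpha)}$. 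In each case this matches the lower bound of \cite{Bendikov}.

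The hard part will be case (3): it requires having in hand the sharp weak-$\alpha$-moment return estimate $\widetilde{\Phi}_{K,\rho_\alpha}(m)\simeq\exp(-m^{1/(1+\alpha)})$ for polycyclic groups of exponential growth --- the exponent $1/(1+\alpha)$ being the interpolation of the classical $m^{1/3}$, which it recovers at $\alpha=2$, and which one can guess from the Faber--Krahn heuristic $\sup_R\exp(-mR^{-\alpha}-R)$ for exponentially growing groups --- and then noticing that precisely this value $\gamma=1/(1+\alpha)$, inserted into the scaling exponent of Theorem \ref{DVBK}, collapses to $(d+1)/(d+1+\alpha)$, as displayed above. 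A secondary, routine point is that Theorems \ref{DV} and \ref{DVBK} are phrased for the model functionals $F=\mathbf{1}_{(0,\infty)}$ and $F(s)=s^\gamma$, so passing to the actual $F(m)=-\log\nu^{(2m)}(e_K)$ only yields equivalence up to the relation $\simeq$ via the monotone comparison $c_1F_{\mathrm{model}}\le F\le c_2F_{\mathrm{model}}$; this is why the conclusions are stated with $\simeq$ and not with sharp constants.
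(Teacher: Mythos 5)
Your strategy is the same as the paper's: obtain the lower bounds on $\log\widetilde{\Phi}_{G,\rho_\alpha}$ from \cite{Bendikov}, and match them by exhibiting a switch--walk--switch measure $q=\nu*\mu*\nu$ with finite weak $\alpha$-moment whose return probability has the right log-decay, computed via Theorem \ref{ReturnBounds} (equivalently \eqref{logsim}). The choices of $\mu$ on the base and $\nu$ on the lamps in each case, the identification of $F(m)=-\log\nu^{(2m)}(e_K)$ up to $\simeq$ with $\mathbf{1}_{(0,\infty)}$, $\log(1+m)$, $m^{1/(1+\alpha)}$ respectively, and the resulting exponent computation, are exactly what the paper does.

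Two small points of imprecision are worth flagging. First, ``$\mu(x)\asymp(1+|x|)^{-d-\alpha}$'' is not sufficient to conclude $n[1-\hat{\mu}(\delta_n^{-1}\xi)]\to\Theta(\xi)$ (a measure with an oscillating constant in the tail can satisfy the two-sided bound without being in the domain of attraction); you should take $\mu_\alpha(x)=c_\alpha(1+\|x\|)^{-d-\alpha}$ exactly, as the paper does. Second, the lower bound you quote from \cite{Bendikov} is not quite off-the-shelf: \cite[Theorem 3.3]{Bendikov} converts a two-sided estimate for $\Phi_G$ into the needed lower bound on $\widetilde{\Phi}_{G,\rho_\alpha}$, and $\Phi_G$ itself is computed by applying Theorem \ref{ReturnBounds} to a \emph{finitely supported} switch--walk--switch measure; your proposal treats this as given. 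For case (3), you only need the upper bound $\nu^{(2m)}(e_K)\le\exp(-cm^{1/(1+\alpha)})$ to conclude, which is supplied by the explicit measure $\nu_\alpha=\sum_i p_i|U^{4^i}|^{-1}\mathbf{1}_{U^{4^i}}$, $p_i=c_\alpha 4^{-i\alpha}$, via \cite[Theorem 4.1]{Bendikov}; the two-sided bound you invoke is true but unnecessary. None of these affects the correctness of the argument.
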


\begin{rem}
The lower bounds are from \cite{Bendikov}. The upper bound in the first
statement is already in \cite{Bendikov} since it is based on the classical
large deviation result in \cite{Donsker1979}. The upper bounds in Statements
2 and 3 make use of the extensions of \cite{Donsker1979} in the spirit of 
\cite{Biskup2001} developed here.
\end{rem}

Iterated applications of this technique gives the following Theorem.

\begin{theo}
\label{theo-alphaiter} Fix $\alpha\in (0,2)$ and integers $d_1,\dots,d_r$.
Given a group $K$, let 
\begin{equation*}
G=(\cdots(K\wr \mathbb{Z}^{d_1})\wr \cdots )\wr Z^{d_r} \mbox{ and }
d=\sum_1^rd_i.
\end{equation*}

\begin{enumerate}
\item Assume that $K$ is finite. Then 
\begin{equation*}
-\log \widetilde{\Phi}_{G,\rho_\alpha}(n) \simeq n^{d/(d+\alpha)}.
\end{equation*}

\item Assume that $K$ has polynomial volume growth. Then 
\begin{equation*}
-\log \widetilde{\Phi}_{G,\rho_\alpha}(n) \simeq n^{d/(d+\alpha)}(\log
n)^{\alpha/(d+\alpha)}.
\end{equation*}
\end{enumerate}
\end{theo}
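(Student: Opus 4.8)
The plan is to argue by induction on $r$, using at each stage the elementary identity relating the return probability of a switch-walk-switch walk on $L\wr\mathbb{Z}^{d'}$ to a Donsker--Varadhan functional of the driving walk on $\mathbb{Z}^{d'}$ (see (\ref{logsim}) and the displayed identity just before it), together with the observation that the exponents involved are \emph{self-correcting} under this operation. Write $G_0=K$ and $G_j=G_{j-1}\wr\mathbb{Z}^{d_j}$ for $1\le j\le r$, so $G=G_r$; set $d^{(j)}=d_1+\cdots+d_j$, $\beta_j=d^{(j)}/(d^{(j)}+\alpha)$, and, when $K$ has polynomial growth, $\gamma_j=\alpha/(d^{(j)}+\alpha)$; put $\Psi_j(m)=m^{\beta_j}$ if $K$ is finite and $\Psi_j(m)=m^{\beta_j}(\log m)^{\gamma_j}$ if $K$ has polynomial growth. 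The inductive claim is that for each $j$ there is a symmetric probability measure $\nu_j$ on $G_j$ with generating support, $\nu_j(e)>0$, and $W(\rho_\alpha,\nu_j)<\infty$, such that $\nu_j^{(2m)}(e)\le\exp(-c_j\Psi_j(m))$ for all large $m$. Granting this for $j=r$: since $\nu_r$ is then an admissible measure in the definition of $\widetilde{\Phi}_{G,\rho_\alpha}$, we get $\widetilde{\Phi}_{G,\rho_\alpha}(Nn)\le c^{-1}\nu_r^{(2n)}(e)\le c^{-1}\exp(-c_r\Psi_r(n))$, which (using that $\Psi_r$ is roughly monotone, so $\Psi_r(n/N)\simeq\Psi_r(n)$) is exactly the asserted upper bound on $\widetilde{\Phi}_{G,\rho_\alpha}$ in both cases of Theorem~\ref{theo-alphaiter}; the matching lower bounds on $\widetilde{\Phi}_{G,\rho_\alpha}$ are those of \cite{Bendikov} (for an iterated wreath product, by iterating the lower-bound constructions there, which are stable under the wreath operation).

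For the base case $j=0$, take $\nu_0$ uniform on $K$ if $K$ is finite, and $\nu_0$ finitely supported, symmetric, generating, with $\nu_0(e)>0$ if $K$ has polynomial growth of some degree $D_0\ge1$; in the latter case $\nu_0^{(2m)}(e)\simeq m^{-D_0/2}$ by the classical on-diagonal estimate for groups of polynomial growth. In either case $W(\rho_\alpha,\nu_0)<\infty$ trivially, and $-\log\nu_0^{(2m)}(e)$ is comparable to $\mathbf{1}_{\{m\ge1\}}$, respectively to $\log m$. For the inductive step $j-1\to j$, fix a symmetric stable-like measure $\mu_j$ on $\mathbb{Z}^{d_j}$ with $\mu_j(0)>0$, generating support, and $W(\rho_\alpha,\mu_j)<\infty$, chosen so that it satisfies the hypotheses of Theorems~\ref{DV}--\ref{Asymptotic} for the isotropic dilations $\delta_t(x)=t^{1/\alpha}x$ (all exponents equal to $\alpha$, so the parameter $\tau=\sum 1/\alpha_i$ of Theorem~\ref{DVBK} for this base equals $d_j/\alpha$). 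Let $\nu_j=q_j:=\nu_{j-1}*\mu_j*\nu_{j-1}$ be the switch-walk-switch measure on $G_j$. It is symmetric, has generating support, $q_j(e)\ge\nu_{j-1}(e)^2\mu_j(0)>0$, and $W(\rho_\alpha,q_j)<\infty$: a single $q_j$-step yields an element $(\eta,x)$ with $\eta$ supported on $\{0,x\}$ and $|(\eta,x)|_{G_j}\lesssim|x|_{\mathbb{Z}^{d_j}}+|\eta_0|_{G_{j-1}}+|\eta_x|_{G_{j-1}}$, with $x\sim\mu_j$ and $\eta_0,\eta_x\sim\nu_{j-1}$ independent, so the finite weak-$\alpha$-moment is inherited (the standard wreath-product moment lemma: $(a+b+c)^\alpha\lesssim a^\alpha+b^\alpha+c^\alpha$, and a sum of independent variables of finite weak-$L^1$ norm has finite weak-$L^1$ norm). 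By the switch-walk-switch identity,
\[
q_j^{(2m)}(e)=\mathbf{E}\Big(\prod_h\nu_{j-1}^{(2l_*(m,h))}(e)\,\mathbf{1}_{\{X_m=e\}}\Big)\le\mathbf{E}\Big(\exp\Big(-\sum_h F_j(l_*(m,h))\Big)\Big),
\]
where $(X_m)$ is the $\mu_j$-walk on $\mathbb{Z}^{d_j}$, $l_*$ its local time (with the harmless modification recalled earlier), and $F_j(l)=-\log\nu_{j-1}^{(2l)}(e)\ge c'_{j-1}\Phi_{j-1}(l)$ for $l\ge1$, with $\Phi_0=\mathbf{1}_{(0,\infty)}$ or $\log(e+\cdot)$ and, for $j\ge2$, $\Phi_{j-1}(l)=l^{\beta_{j-1}}$ or $l^{\beta_{j-1}}(\log(e+l))^{\gamma_{j-1}}$, by the inductive hypothesis.

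It then remains to bound the right-hand side from above by applying the results of the previous sections to the functional $\sum_h\Phi_{j-1}(l_*(m,h))$ of the $\mathbb{Z}^{d_j}$-walk: Theorem~\ref{DV} when $\Phi_{j-1}=\mathbf{1}_{(0,\infty)}$ (the case $j=1$, $K$ finite); Theorem~\ref{DVBK} when $\Phi_{j-1}(l)=l^\gamma$ with $\gamma=\beta_{j-1}\in(0,1)$ (the case $j\ge2$, $K$ finite); and the general Theorem~\ref{Asymptotic} of Section~\ref{sec-DVBK} when $\Phi_{j-1}$ carries a logarithmic factor ($K$ of polynomial growth). A direct computation shows that the one-step recursion on exponents induced by these theorems, with $\tau=d_j/\alpha$, is exactly invariant on the trajectory $\beta_j=d^{(j)}/(d^{(j)}+\alpha)$, $\gamma_j=\alpha/(d^{(j)}+\alpha)$ (with $\gamma_j\equiv0$ in the finite case): for the power case one checks $\frac{\gamma+\tau(1-\gamma)}{1+\tau(1-\gamma)}=\frac{d^{(j)}}{d^{(j)}+\alpha}=\beta_j$, the logarithmic exponents align likewise, and the base data ($\Phi_0=\mathbf{1}$ or $\log$) already lands on this trajectory after the first step. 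Hence $q_j^{(2m)}(e)\le\exp(-c_j\Psi_j(m))$ for all large $m$, and taking $\nu_j=q_j$ closes the induction.

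The main obstacle is twofold. First, one must confirm that the class of functions $F$ for which Theorem~\ref{Asymptotic} is valid contains the model functions $l\mapsto l^\beta(\log(e+l))^\gamma$ with $\beta\in[0,1)$, $\gamma\ge0$ (and their constant multiples, and the bare logarithm) arising along the tower, and that the control of $F_j(l)=-\log\nu_{j-1}^{(2l)}(e)$ by $\Phi_{j-1}$ only up to multiplicative constants is compatible with the monotonicity in $F$ of the Donsker--Varadhan asymptotics, so that replacing $F_j$ by $c'_{j-1}\Phi_{j-1}$ under the expectation is legitimate; this is the genuinely delicate point, and is the reason one leans on Theorem~\ref{Asymptotic} (and, through it, on the original techniques of Donsker and Varadhan) rather than on a black-box appeal to \cite{Biskup2001}. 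Second, one must carry the moment bookkeeping through the tower, using the wreath-product moment lemma to keep each $q_j$ simultaneously of near-optimal return-probability decay and of finite weak-$\alpha$-moment for $\rho_\alpha$ on $G_j$, and confirm that the lower bounds of \cite{Bendikov} apply at the top of the tower; both points are routine but require care. Everything else is the arithmetic of the self-correcting exponents $\beta_j,\gamma_j$ together with the estimates already established in Section~\ref{sec-DVBK} and before.
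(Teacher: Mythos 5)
Your proposal matches the paper's own argument in all essentials: for the upper bound, the paper (in the example following Theorem~\ref{ReturnAsymptotic}) constructs exactly the iterated switch--walk--switch measure $q_{\alpha,j}=q_{\alpha,j-1}*\mu_{\alpha,j}*q_{\alpha,j-1}$ with $\alpha$-stable-like $\mu_{\alpha,j}$ on $\mathbb{Z}^{d_j}$, checks that a finite weak-$\alpha$-moment propagates up the tower, and invokes Example~\ref{Iterative} (i.e., iterated applications of Theorems~\ref{ReturnBounds}/\ref{Asymptotic}) with the same self-correcting exponent recursion $\beta_j=d^{(j)}/(d^{(j)}+\alpha)$, $\gamma_j=\alpha/(d^{(j)}+\alpha)$ that you verify. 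The one inaccuracy is in your parenthetical describing the lower bound: it does not come from ``iterating the lower-bound constructions in \cite{Bendikov}, which are stable under the wreath operation.'' The mechanism is rather to apply the general comparison theorem \cite[Theorem 3.3]{Bendikov} --- which bounds $\widetilde{\Phi}_{G,\rho_\alpha}$ from below in terms of $\Phi_G$ --- to a separate estimate on $\Phi_G$ itself, and that estimate is not in \cite{Bendikov}: it is obtained in this paper (Example~\ref{Iterative}) by running the same iterated Donsker--Varadhan machinery with finitely supported driving measures. You should make this intermediate computation of $\Phi_G$ explicit, since without it \cite[Theorem 3.3]{Bendikov} has nothing to compare against.
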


\section{Operator-stable laws}

\setcounter{equation}{0}

For $\alpha\in (0,2)$, the rotationally symmetric $\alpha $-stable law with
density $f_\alpha$ on $\mathbb{R}^{d}$ is the probability distribution whose
Fourier transform is $e^{-|\xi|^\alpha}$. It is embedded in a convolution
semigroup with density $f^t_\alpha$ which satisfies $f^t_\alpha(x)=
t^{-d/\alpha}f_\alpha\circ \delta^\alpha_{1/t}$ where $\delta^\alpha_t$ is
the isotropic dilation $\delta^\alpha_t(x)=t^{1/\alpha} x$, $x\in \mathbb{R}%
^d$, $t>0$.

More generally, a probability measure $\mu$ on $\mathbb{R}^d$ is called a
(non-degenerate) symmetric $\alpha$-stable law if its support is $\mathbb{R}%
^d$ and it is embedded in a probability semigroup $\mu^t$ such that $%
\delta^\alpha_t(\mu)=\mu^t$. A necessary and sufficient condition for 
that property is that $%
\hat{\mu}=e^{-\Theta}$ with 
\begin{equation*}
\Theta (\xi)= \int_{\mathbb{S}^{d-1}}\int_0^\infty(1-\cos(\xi,\delta^\alpha
_r y)) \frac{M(dy)}{r} \frac{dr}{r}
\end{equation*}
where $M$ is a finite Borel measure on $\mathbb{S}^{d-1}$ whose support
generates $\mathbb{R}^d$ (that is, the L\'evy measure $W$ of $\mu$ satisfies 
$\delta^\alpha_t(W)=tW$ and its support generates $\mathbb{R}^d$).

In the next section, we briefly review the definition of operator-stable
laws. In this definition, the role of the isotropic dilations is played by
more general one-parameter groups of transformations $t^E=\sum_0^\infty\frac{%
(\log t)^n E^n}{n!}$ where $E$ is an endomorphism of the underlying vector
space. For a detail account of the theory of operator-stable laws, see \cite%
{Hazod2001,JuMa}. Given a Borel measure $\mu$, we let $t^E(\mu)$ be the
Borel measure defined by $t^E(\mu)(A)= \mu(t^{-E}(A))$.

\subsection{Operator-stable laws}

Let $\mathbb{V}$ be a finite dimensional vector space equipped with the
Euclidean scalar product $\langle\cdot,\cdot\rangle$. Let $\mathcal{M}^{1}(%
\mathbb{V})$ denote the set of probability measures on $\mathbb{V}$. Given $%
\mu\in \mathcal{M}^1$, let $\hat{\mu}=e^{-\psi}$ denotes its Fourier
transform. Let $\mathcal{I}D(\mathbb{V})$ denotes the set of infinitely
divisible laws on $\mathbb{V}$. Throughout this section, we use notation
compatible with \cite{Hazod2001}. Recall that if $\mu\in \mathcal{I}D(%
\mathbb{V})$ with Fourier transform $e^{-\psi}$ then $e^{-t\psi}$ is the
Fourier transform of a probability measure $\mu^t$ and $(\mu^t)_{t\ge 0}$ is
a continuous convolution semigroup of measure (uniquely determined by $\mu$%
). Of course, for $\mu\in \mathcal{I}D(\mathbb{V})$, the function $\psi$
admits a Levy-Khinchine representation so that $xi \mapsto \psi(\xi)$ is the
sum of three terms, namely, the drift term $-i \langle c, \xi\rangle$ with $%
c\in \mathbb{V}$, the Gaussian term $\frac{1}{2}\langle Q \xi,\xi \rangle$,
where $Q\in \mbox{End}^+(\mathbb{V})$, and the generalized Poisson term 
\begin{equation*}
-\int_{\mathbb{V}^*} \left(e^{i\langle x,\xi\rangle}-1 -\frac{i\langle
x,\xi\rangle}{1+\|x\|^2}\right)W(dx)
\end{equation*}
where $W$ is a Levy measure. Following \cite{Hazod2001}, we call the triple $%
(c,Q,W)$ the L-K triple of $\mu$ (this triple is uniquely determined by $\mu$%
). We will be interested in the symmetric case where $c=0$ and $W(x)=W(-x)$.
In this case, the Poisson term of the Levy-Khinchine formula equals 
\begin{equation*}
\int_{\mathbb{V}^*} (1-\cos \langle x,\xi \rangle)W(dx).
\end{equation*}
In general, we let $\eta_Q$ be the (Gaussian) law associated with the triple 
$(0,Q,0)$ and $e(W)$ the (generalized-Poisson) law associated with ($0,0,W)$.

\begin{defin}[Definition 1.3.11 \protect\cite{Hazod2001}]
A law $\eta \in \mathcal{ID}(\mathbb{V})$ is said to be operator-stable if
there exist $E\in \mbox{End}(\mathbb{V})$ and a mapping $a:%
\mathbb{R}
_{+}^{\times }\rightarrow \mathbb{V}$ such that%
\begin{equation*}
t^{E}(\eta )\ast \delta _{a(t)}=\eta ^{t},
\end{equation*}%
for all $t\in 
\mathbb{R}
_{+}^{\times }.$ In this case, $E$ is called an exponent of $\eta .$ Let EXP$%
(\eta )$ denote the set of exponents of $\eta .$ If $a\equiv 0$, $\eta$ is
said to be strictly operator-stable.
\end{defin}

One can always split an operator-stable law into a Gaussian part and a
generalized Poisson part that are supported on supplementary linear
subspaces of $\mathbb{V}$.

The subspace supporting the Gaussian part is either trivial or associated
with the eigenvalues $z$ of $E$ with $\mbox{Re}(z) =1/2$ of $E$. The
subspace spanned by the support of $W$ is associated with the eigenvalues $z$
of $E$ with real part strictly larger than $1/2$. Both the Gaussian part $%
e(Q)$ and the Poisson part $e(W)$ are operator stable with exponent $E$.
Further, $T^E(W)=tW$. See the splitting theorem, \cite[Lemma 1.3.12 and
Theorem 1.3.14]{Hazod2001}.

Now we restrict our attention to symmetric operator stable laws (so that $%
c=0, W(dx)=W(-dx)$). Since $t^E(W)=tW$, the Fourier transform of $e(W)$ can
be written (with $\mathbb{S}\subset \mathbb{V}$, the unit sphere) 
\begin{equation*}
\int_{\mathbb{V}^*} (1-\cos(\langle x,\xi \rangle)W(dx) = \int_0^\infty
\int_{\mathbb{S}} (1-\cos \langle \xi , r^E y \rangle) \frac{M(dy)}{r} \frac{%
dr}{r}
\end{equation*}
where $M$ is a finite measure on $\mathbb{S}$. Compare with the hypothesis
in Theorem \ref{DV} and Theorem \ref{DVBK}.

Choose an orthonormal basis $\{e_{i}\}$ on $\mathbb{V}$ with respect to
inner product $<,>.$ The generating functional $A$ of $(\eta ^{t})_{t\geq 0}$
(see \cite[1.3.16]{Hazod2001}) is given for $f\in C^{2}(\mathbb{V})$ by 
\begin{eqnarray*}
<A,f> &=&\frac{1}{2}\sum q_{ij}\cdot \frac{\partial ^{2}}{\partial
x_{i}\partial x_{j}}f(0) \\
&&+\int_{\mathbb{V}^{\times }}\left[ f(x)-f(0)-\sum \frac{\partial }{%
\partial x_{i}}f(0)\cdot \frac{x_{i}}{1+\left\Vert x\right\Vert ^{2}}\right]
W(dx).
\end{eqnarray*}%
One can also write down the Dirichlet form of the continuous convolution
semigroup $(\eta ^{t})_{t\geq 0}$ as%
\begin{eqnarray*}
\mathcal{E}_{\eta }(f,g)&=&\frac{1}{2}\int_{ \mathbb{R} ^{d}} \sum
q_{ij}\cdot \frac{\partial f}{\partial x_{i}}(x)\frac{\partial g}{\partial
x_{j}}dx \\
&&+\frac{1}{2}\int_{ \mathbb{R} ^{d}}\int_{ \mathbb{R}
^{d}}(f(x+y)-f(x))(g(x+y)-g(x))W(dy)dx, \\
\mathcal{D}(\mathcal{E}_{\eta })&=&\{f\in L^{2}(\mathbb{V}):\mathcal{E}%
_{\eta }(f,f)<\infty \}.
\end{eqnarray*}
From the splitting theorem \cite[Theorem 1.3.14]{Hazod2001} it follows that $%
(q_{ij})$ is semi-positive definite and that the subspace where it is
positive definite is the support of the Gaussian part $e(Q)$.

\begin{exa}[Anisotropic radial operator-stable laws]
One can construct operator-stable laws with respect to non-isotropic
homogeneous norms. On $\mathbb{V}=%
\mathbb{R}
^{d},$ let $E$ be a $d\times d$ diagonal matrix with diagonal entries $%
a_{i}\in (\frac{1}{2},\infty ).$ We may assume that $a_{1}=\min_{1\leq i\leq
d}a_{i}.$ Since 
\begin{equation*}
t^{E}=\left( 
\begin{array}{cccc}
t^{a_{1}} & 0 & \dots & 0 \\ 
0 & t^{a_{2}} & \dots & 0 \\ 
\vdots & \vdots & \ddots & \vdots \\ 
0 & 0 & \cdots & t^{a_{d}}%
\end{array}%
\right) ,
\end{equation*}%
we can think of $t^{E}$ as dilations scaling differently in different
coordinates. The following norm was considered in \cite{Hebish1990}. Let $%
\mathbb{B}=\{x:\left\Vert x\right\Vert <1\}$ be the open Euclidean unit
ball, define%
\begin{equation*}
\left\Vert x\right\Vert _{\ast ,E}:=\inf \{t:t^{-a_{1}^{-1}E}x\in \mathbb{B}%
\}.
\end{equation*}%
From Theorem 1 in \cite{Hebish1990}, $\left\Vert \cdot \right\Vert _{\ast
,E} $ is a sub-additive homogeneous norm. Set 
\begin{equation*}
W(dx)=\frac{c}{ \left\Vert x\right\Vert _{\ast ,E}^{a^{-1}_{1}+\tra%
(a_1^{-1}E)}}.
\end{equation*}%
Clearly, $t^{E}(W)= t W$ for all $t\in 
\mathbb{R}
_{+}^{\times }.$ Let $\eta $ be the generalized Poisson law with L-K triple $%
(0,0,W)$. Then $\eta $ is operator-stable with exponent $E.$ Note that the
assumption $a_{1}>\frac{1}{2}$ is needed so that $W$ is a L\'{e}vy measure.
\end{exa}

\begin{exa}[Anisotropic axial operator-stable laws]
Let $E$ be as in the previous example. For $\alpha\in (0,2)$ let $\nu_\alpha$
be the one-dimensional symmetric $\alpha$-stable law (so that $\hat{\nu}%
^\alpha(y)=e^{-|y|^\alpha}$). Let $\eta$ be the product measure on $\mathbb{V%
}=\mathbb{R}^d$ given by $\eta=\otimes_1^d \nu_{1/a_i}$ so that $\hat{\eta}%
(\xi)=e^{-\sum_1^d|\xi_i|^{1/a_i}}$. Clearly, $\eta$ is operator-stable with
exponent $E$. Note that in this case, the Levy measure is supported on the
union of the axes.
\end{exa}

\subsection{Domain of operator-attraction}

For full probability laws, the class of operator-stable laws coincides with
limit distributions of normalized sums of i.i.d.\ random variables and
convergence in law of normalized sums can be characterized in terms of
convergence of Fourier transforms or convergence of generators as in
Trotter's theorem. More precisely, we have the following equivalent
characterizations of convergence.

\begin{theo}[{\protect\cite[Theorem 1.6.12 and Corollary 1.6.18]{Hazod2001}}]

\label{theo-conv} Let $\mu,\eta\in \mathcal{M}^1(\mathbb{V})$ with $\eta\in 
\mathcal{I}D(\mathbb{V})$ and $\hat{\eta}=e^{-\psi}$. Let $T_{n}\in GL(%
\mathbb{V})$ and set $\mu _{n}=T_{n}\mu$. The following properties are
equivalent:

\begin{enumerate}
\item $\mu _{n}^{(n)}\Longrightarrow \eta .$

\item $\mu _{n}^{(\left\lfloor nt\right\rfloor )}\Longrightarrow \eta ^{t}$,
uniformly in $t$ over compact subsets of $[0,\infty)$.

\item $n (1- \hat{\mu}_n)\rightarrow \psi $ uniformly on compact subsets.

\item For any $f\in C^{2}(\mathbb{V})$, $n(\mu _{n}-\delta _{0})*f(0)
\rightarrow <A,f>$ where $A$ is the generating functional of $\eta$.
\end{enumerate}
\end{theo}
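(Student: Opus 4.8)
The plan is to treat the four conditions as two ``weak-convergence'' statements, (1) and (2), and two ``infinitesimal'' statements, (3) and (4), connected through L\'evy's continuity theorem, a scalar lemma on $n$-th powers, and the Trotter--Kato correspondence between convolution semigroups and their generating functionals; I would arrange the implications as $(3)\Rightarrow(2)\Rightarrow(1)\Rightarrow(3)$ together with $(3)\Leftrightarrow(4)$. For $(3)\Rightarrow(2)$: since $n(1-\hat\mu_n)\to\psi$ uniformly on compacts and $\psi$ is continuous with $\psi(0)=0$, $\hat\mu_n\to 1$ uniformly on each compact $K$, so for large $n$ the branch $\log\hat\mu_n$ is defined on $K$ and
\[
\lfloor nt\rfloor\log\hat\mu_n(\xi)=-\frac{\lfloor nt\rfloor}{n}\,n(1-\hat\mu_n(\xi))+O\!\left(n\,|1-\hat\mu_n(\xi)|^2\right)\longrightarrow -t\,\psi(\xi),
\]
uniformly for $(t,\xi)$ in compact sets. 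Hence $\widehat{\mu_n^{(\lfloor nt\rfloor)}}=\hat\mu_n^{\lfloor nt\rfloor}\to e^{-t\psi}=\widehat{\eta^t}$ uniformly on compacts, and L\'evy's theorem gives $\mu_n^{(\lfloor nt\rfloor)}\Longrightarrow\eta^t$, with uniformity in $t$ inherited from the uniform convergence of characteristic functions (bound a metric for the weak topology). Taking $t=1$ makes $(2)\Rightarrow(1)$ trivial.

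For $(1)\Rightarrow(3)$: from $\mu_n^{(n)}\Longrightarrow\eta$ and L\'evy, $\hat\mu_n^n\to e^{-\psi}$ pointwise (indeed locally uniformly); since $\eta$ is full, $\psi$ is finite with $\operatorname{Re}\psi\ge 0$, so $e^{-\psi(\xi)}\neq 0$, and the scalar lemma ``$|a_n|\le 1,\ a_n^n\to b\neq 0\ \Rightarrow\ a_n\to 1,\ n(a_n-1)\to\operatorname{Log}b$'' gives $n(1-\hat\mu_n(\xi))\to\psi(\xi)$ for each $\xi$. The principal difficulty is upgrading this to uniform convergence on compacts. Here I would prove a uniform infinitesimality bound: using $\operatorname{Re} n(1-\hat\mu_n)(\xi)=n\int(1-\cos\langle x,\xi\rangle)\,\mu_n(dx)$ together with the elementary inequality $1-\cos 2u\le 4(1-\cos u)$ and convergence at finitely many well-chosen points, one obtains $\sup_n n\int\min(1,\|x\|^2\|\xi\|^2)\,\mu_n(dx)<\infty$ uniformly for $\xi$ in a compact set; this controls the modulus of continuity of $\{n(1-\hat\mu_n)\}$, so Arzel\`a--Ascoli promotes the pointwise convergence to uniform convergence on compacts, ruling out mass of $\mu_n$ escaping at the wrong rate.

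For $(3)\Leftrightarrow(4)$: writing $n(\mu_n-\delta_0)*f(0)=n\int(f(-x)-f(0))\,\mu_n(dx)$, Plancherel gives $n(\mu_n-\delta_0)*f(0)=-(2\pi)^{-d}\int n(1-\hat\mu_n(\xi))\,\hat f(\xi)\,d\xi$ (in the Fourier normalization with $f(0)=(2\pi)^{-d}\int\hat f$), while the L-K representation of $A$ gives $\langle A,f\rangle=-(2\pi)^{-d}\int\psi(\xi)\,\hat f(\xi)\,d\xi$ after the routine symmetrization and truncation bookkeeping, since the semigroup $\eta^t$ has Fourier symbol $-\psi$. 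The uniform infinitesimality bound of the previous step furnishes a domination $|n(1-\hat\mu_n(\xi))|\le C(1+\|\xi\|^2)$, and $\psi$ grows at most quadratically, so for $f\in C^2$ with fast enough decay the uniform-on-compacts convergence from $(3)$ plus dominated convergence yields $(4)$, first on $C^2_c$ and then on a core. Conversely, $(4)$ applied to a fixed bump function yields the uniform infinitesimality bound, and applied to test functions whose Fourier transforms approximate $\xi\mapsto e^{-\varepsilon\|\xi\|^2+i\langle\xi_0,\xi\rangle}$ recovers $n(1-\hat\mu_n(\xi_0))\to\psi(\xi_0)$ pointwise, which the bound again upgrades to $(3)$; equivalently one may read $(4)$ as convergence of the bounded generators $f\mapsto n(\mu_n*f-f)$ to $A$ on a core and invoke Trotter--Kato together with the accompanying-laws estimate $\|\mu_n^{(n)}-(\text{law with Fourier transform }e^{-n(1-\hat\mu_n)})\|\to 0$ to pass between $(4)$ and $(2)$. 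The only genuinely delicate point is the uniform infinitesimality estimate in $(1)\Rightarrow(3)$; everything else is an assembly of L\'evy's continuity theorem, the scalar $n$-th-power lemma, and the Trotter--Kato approximation theorem.
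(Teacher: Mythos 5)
This theorem is cited directly from \cite[Theorem 1.6.12 and Corollary 1.6.18]{Hazod2001}; the paper supplies no proof of its own, so there is no in-paper argument against which to compare your reconstruction. What you wrote is a plausible blueprint, but there is a genuine gap in the $(1)\Rightarrow(3)$ step, which you yourself flag as ``the only genuinely delicate point.''

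You assert that the uniform bound $\sup_n n\int\min(1,\|x\|^2\|\xi\|^2)\,\mu_n(dx)<\infty$ ``controls the modulus of continuity'' of $\{n(1-\hat\mu_n)\}$, so Arzel\`a--Ascoli upgrades pointwise to locally uniform convergence. That bound controls the tail $n\mu_n(\|x\|>1)$ and the truncated second moment $n\int_{\|x\|\le 1}\|x\|^2\,\mu_n(dx)$, but it does \emph{not} yield equicontinuity of $\xi\mapsto n\int(1-\cos\langle x,\xi\rangle)\,\mu_n(dx)$: the increment over $\|\xi-\xi'\|<\delta$ receives a contribution from the intermediate range $1\ll\|x\|\lesssim 1/\delta$ which the bound leaves of order $O(1)$, not $o(1)$, as $\delta\to 0$. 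So the family need not be equicontinuous a priori and Arzel\`a--Ascoli does not apply as stated. Two standard repairs exist. First, $\mu_n^{(n)}\Longrightarrow\eta$ already forces $\hat\mu_n^n\to e^{-\psi}$ \emph{locally uniformly} (L\'evy's continuity theorem gives local uniformity for free once the limit is continuous); since $e^{-\psi}$ is nonvanishing and continuous with value $1$ at $0$, one may take the continuous branch of $\log$ on any compact $K$, obtaining $n\log\hat\mu_n\to-\psi$ uniformly on $K$, and then $n(1-\hat\mu_n)=-n\log\hat\mu_n\bigl(1+O(|1-\hat\mu_n|)\bigr)\to\psi$ uniformly. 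Second, each $n(1-\hat\mu_n)$ is a continuous negative definite function vanishing at $0$, and pointwise convergence of continuous negative definite functions to a limit that is continuous at the origin is automatically locally uniform; this general principle bypasses the equicontinuity estimate entirely. Either replaces your Arzel\`a--Ascoli step. The rest of your sketch --- the chain $(3)\Rightarrow(2)\Rightarrow(1)$ via logarithms and L\'evy, and the Plancherel identification in $(3)\Leftrightarrow(4)$ --- is reasonable, though the latter requires attention to the admissible test class (the reference uses $C^2$ with appropriate boundedness so both sides are defined), and your scalar lemma ``$|a_n|\le 1,\ a_n^n\to b\ne 0\Rightarrow a_n\to 1$'' is true but not quite trivial; working with a continuous branch of $\log\hat\mu_n$ anchored at $\hat\mu_n(0)=1$ sidesteps it cleanly.
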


Next, we introduce the definition of strict domain of operator-attraction.

\begin{defin}[Definition 1.6.3 \protect\cite{Hazod2001} ]
\bigskip Let $\eta \in \mathcal{M}^{1}(\mathbb{V}).$ Then the strict domain
of operator-attraction $\mbox{DOA}_s(\eta )$ of $\eta $ consists of all $\mu
\in \mathcal{M}^{1}(\mathbb{V})$ for which there exists a sequence $T_{n} $
in $GL(\mathbb{V})$ such that%
\begin{equation*}
\eta =\lim_{n\rightarrow \infty }T_{n}(\mu ^{(n)}).
\end{equation*}
\end{defin}

\begin{rem}
With this definition, $\mbox{DOA}_s(\eta )\neq \varnothing $ is equivalent
to saying $\eta $ can be obtained as the limiting distribution of
convolution powers of some $\mu $ after normalization (but without
re-centering). The word ``strict'' refers to the absence of re-centering.
When $T_{n}$ can be taken as the isotropic matrix $b_{n}Id,$ $b_{n}\in 
\mathbb{R}
_{+},$ this agrees with the definition of the strict domain of attraction.
\end{rem}

\begin{defin}[Definition 1.10.1 \protect\cite{Hazod2001} ]
\bigskip Let $\eta \in \mathcal{M}^{1}(\mathbb{V})$ be operator-stable. Then
its strict domain of normal operator-attraction $\mbox{DNOA}_s(\eta )$
consists of all $\mu \in \mathcal{M}^{1}(\mathbb{V})$ such that%
\begin{equation*}
\eta =\lim_{n\rightarrow \infty }n^{-E}(\mu ^{(n)})
\end{equation*}%
for some $E\in \mbox{EXP}(\eta ).$
\end{defin}

\begin{exa}
\bigskip Let $U$ (resp. $V$) be a random variable on $%
\mathbb{Z}
$ in the domain of normal attraction of the $\alpha$ (resp. $\beta $)
symmetric-stable law $\nu _{\alpha }$ ($\nu _{\beta }$ resp.) on $%
\mathbb{R}
$. The measure $\eta $ on $%
\mathbb{R}
^{2}$ $\eta (dx,dy)=\nu _{\alpha }(dx)\otimes \nu _{\beta }(dy) $ is
operator stable with exponent 
\begin{equation*}
E=\left( 
\begin{array}{cc}
\frac{1}{\alpha } & 0 \\ 
0 & \frac{1}{\beta }%
\end{array}%
\right) .
\end{equation*}
It is clear that the law of $(U,V)^{T}$ is in $\mbox{DNOA}_{s}(\eta ).$ Set%
\begin{equation*}
\left( 
\begin{array}{c}
X \\ 
Y%
\end{array}%
\right) =\left( 
\begin{array}{cc}
\cos \theta & -\sin \theta \\ 
\sin \theta & \cos \theta%
\end{array}%
\right) \left( 
\begin{array}{c}
U \\ 
V%
\end{array}%
\right) ,
\end{equation*}%
and let $\mu $ denote the distribution of $(X,Y)^{T}.$ In order to obtain
convergence of $\mu^{(n)}$ we need to rotate back by a rotation of angle $%
\theta $ then normalize component-wise. That is, setting 
\begin{equation*}
T_{n}=\left( 
\begin{array}{cc}
n^{-\frac{1}{\alpha }} & 0 \\ 
0 & n^{-\frac{1}{\beta }}%
\end{array}%
\right) \left( 
\begin{array}{cc}
\cos \theta & \sin \theta \\ 
-\sin \theta & \cos \theta%
\end{array}%
\right) ,
\end{equation*}%
we have $\eta =\lim_{n\rightarrow \infty }T_{n}(\mu ^{(n)}).$ So, in this
case, $\mu\in \mbox{DOA}_s(\eta)$ but does not belong to $\mbox{DNOA}%
_s(\eta) $.
\end{exa}

\begin{rem}
Theorem 4.11.5 of \cite{JuMa} gives a practical criterion to show that a
given measure $\nu$ belongs to the domain of normal attraction of a full
operator-stable law $\eta$ without Gaussian part. More precisely, the
following statement is a simple modification of \cite[Theorem 4.11.5]{JuMa}.
Let $\eta$ be operator-stable, symmetric, with no Gaussian part and L\'evy
measure $W$ given by 
\begin{equation*}
W(B)=\int_{\mathbb{S}}\int_0^\infty \mathbf{1}_B(r^Ey) \frac{M(dy)}{r}\frac{%
dr}{r}.
\end{equation*}
Let $E\in \mbox{EXP}(\eta ).$ A necessary and sufficient condition for a
probability measure $\nu$ to be such that $n^{-E}(\nu^{(n)})\Longrightarrow
\eta $ is that 
\begin{equation*}
\lim_{t\rightarrow \infty} t\nu(\{ s^E x: x\in \Omega,\; s>t\})=M(\Omega)
\end{equation*}
for any measurable $\Omega \in \mathbb{S}_d$ with $M(\partial \Omega)=0$.
\end{rem}

\begin{exa}
For $\gamma\in (0,2)$, let $\eta_\gamma^t$ be symmetric stable law on $%
\mathbb{R}$ with Fourier transform $e^{-t|\xi|^\gamma}$. On $\mathbb{Z}$,
fix a doubly infinite symmetric sequence $z_k=-z_{-k}$, $k\in \mathbb{Z}$,
and reals $p_k=p_{-k}\ge 0$ with $\sum p_k=1$. Consider the probability
measure 
\begin{equation*}
\mu=\sum_{k\in \mathbb{Z}}p_k\mathbf{1}_{z_k}.
\end{equation*}
Consider the case when $z_k= \lfloor k^\beta \rfloor$ and $p_k= c_\alpha (1+
|k|)^{-\alpha}$ with $\alpha >1$, $\beta\ge 1$ and $\gamma=
\beta/(\alpha-1)<2$. Then, by Remark 2.6 (in fact, in this particular case,
by \cite[Theorem 4.11.5]{JuMa}), $n^{-\gamma} \mu^{(n)} \Longrightarrow
\eta_{\gamma}^c$ for some fixed $c>0$.

Note that if $z_k=\lfloor 2^{\beta k}\rfloor$ and $p_k=2^{-\alpha k}$ with $%
\alpha,\beta>0$ and $\gamma=\beta/\alpha$ then $t\nu(\{s^\gamma: s>t\}) $
stays in a compact interval in $(0, \infty)$ but does not converges.
\end{exa}

\begin{nota}
\bigskip A measure $\mu \in \mathcal{M}^{1}(\mathbb{V})$ is said to be
adapted if $\mu $ is not supported by a proper linear subspace of $\ \mathbb{%
V}$. Let $\mathcal{M}_{a}^{1}(\mathbb{V})$ denote the set of adapted
probability measures on $\mathbb{V}.$
\end{nota}

The theorem below is a characterization of strictly operator-stable laws as
those adapted distributions whose domain of strict operator-attraction is
non-empty.

\begin{theo}[Theorem 1.6.4 \protect\cite{Hazod2001} ]
For $\eta \in \mathcal{M}_{a}^{1}(\mathbb{V})$ the following assertions are
equivalent:

\begin{enumerate}
\item $\eta $ is strictly operator-stable.

\item $\eta \in \mbox{\em DOA}_s(\eta ).$

\item $\mbox{\em DOA}_s(\eta )\neq \varnothing .$
\end{enumerate}
\end{theo}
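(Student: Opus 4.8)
The plan is to prove the cycle $(1)\Rightarrow(2)\Rightarrow(3)\Rightarrow(1)$, with only the last implication carrying real content. For $(1)\Rightarrow(2)$: if $\eta$ is strictly operator-stable with exponent $E\in\mathrm{EXP}(\eta)$, then $t^{E}(\eta)=\eta^{t}$ for every $t>0$, so, specializing to $t=n$, $\eta=n^{-E}\bigl(\eta^{(n)}\bigr)$; taking $\mu=\eta$ and $T_{n}=n^{-E}\in GL(\mathbb{V})$, the (constant) sequence $T_{n}(\mu^{(n)})$ equals $\eta$, so $\eta\in\mathrm{DOA}_{s}(\eta)$. The implication $(2)\Rightarrow(3)$ is immediate.

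For $(3)\Rightarrow(1)$, fix $\mu\in\mathrm{DOA}_{s}(\eta)$ and $T_{n}\in GL(\mathbb{V})$ with $T_{n}(\mu^{(n)})\Longrightarrow\eta$, and set $\mu_{n}=T_{n}\mu$. Since $\eta$ is adapted — hence full — the classical theorem on limits of triangular arrays applies: the array $(\mu_{n})$ is uniformly infinitesimal, $\mu_{n}\Longrightarrow\delta_{0}$, and $\eta\in\mathcal{I}D(\mathbb{V})$, so the convolution semigroup $(\eta^{t})_{t\ge0}$ is defined. By Theorem \ref{theo-conv} (equivalence of items $(1)$ and $(2)$),
\[
\mu_{n}^{(\lfloor nt\rfloor)}=T_{n}\bigl(\mu^{(\lfloor nt\rfloor)}\bigr)\Longrightarrow\eta^{t}
\]
uniformly for $t$ in compact subsets of $(0,\infty)$, while applying the defining convergence along $\lfloor nt\rfloor\to\infty$ gives $T_{\lfloor nt\rfloor}\bigl(\mu^{(\lfloor nt\rfloor)}\bigr)\Longrightarrow\eta$. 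Thus, with $A_{n}(t):=T_{n}T_{\lfloor nt\rfloor}^{-1}$, the sequences $Z_{n}:=T_{\lfloor nt\rfloor}\bigl(\mu^{(\lfloor nt\rfloor)}\bigr)$ and $A_{n}(t)Z_{n}$ converge to the full laws $\eta$ and $\eta^{t}$ respectively; the convergence-of-types theorem for linear normalizations then shows that $\{A_{n}(t)\}_{n}$ is relatively compact in $GL(\mathbb{V})$ and that every accumulation point $A(t)$ satisfies $A(t)(\eta)=\eta^{t}$.

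It remains to manufacture, out of the a priori non-canonical family $\{A(t):t>0\}$, a genuine one-parameter group $t\mapsto t^{E}$. I would argue structurally: the stabilizer $\mathrm{Inv}(\eta)=\{A\in GL(\mathbb{V}):A(\eta)=\eta\}$ is a \emph{compact} subgroup (fullness of $\eta$), and $\mathbb{G}:=\{(t,A)\in\mathbb{R}_{+}^{\times}\times GL(\mathbb{V}):A(\eta)=\eta^{t}\}$ is a \emph{closed} subgroup of $\mathbb{R}_{+}^{\times}\times GL(\mathbb{V})$ (using weak continuity of $t\mapsto\eta^{t}$ and of the action $A\mapsto A(\eta)$); by the previous paragraph the projection $\mathbb{G}\to\mathbb{R}_{+}^{\times}$ is onto, with kernel $\{1\}\times\mathrm{Inv}(\eta)$. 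Hence $\mathbb{G}$ is a Lie group that is a compact extension of $\mathbb{R}_{+}^{\times}\cong\mathbb{R}$; a one-parameter subgroup of $\mathbb{G}$ transverse to the kernel projects isomorphically onto $\mathbb{R}_{+}^{\times}$, yielding a continuous homomorphism $t\mapsto(t,B(t))\in\mathbb{G}$, and since a continuous one-parameter subgroup of $GL(\mathbb{V})$ has the form $B(t)=t^{E}=e^{(\log t)E}$ for a unique $E\in\mathrm{End}(\mathbb{V})$, we get $t^{E}(\eta)=\eta^{t}$ for all $t>0$, i.e.\ $\eta$ is strictly operator-stable. This closes the cycle.

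I expect the last paragraph to be the crux. The convergence-of-types argument only produces, for each fixed $t$ separately, \emph{some} invertible intertwiner $A(t)$, and upgrading this to a continuous (equivalently, measurable) homomorphism requires both (i) extracting continuity of $t\mapsto A(t)\,\mathrm{Inv}(\eta)$ from the uniform-in-$t$ convergence in Theorem \ref{theo-conv}, and (ii) the structure theory of closed subgroups of $GL(\mathbb{V})$ and of Lie-group extensions by compact kernels over $\mathbb{R}$, which is precisely where the detailed arguments of \cite{Hazod2001} (and \cite{JuMa}) are needed; the remaining steps are routine.
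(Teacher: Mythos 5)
The paper does not prove this statement at all: it is quoted verbatim as Theorem 1.6.4 of Hazod--Siebert \cite{Hazod2001}, so there is no in-paper proof to compare against. What you have written is essentially a reconstruction of the standard proof one would find in \cite{Hazod2001} or \cite{JuMa}, and it is correct in outline.

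A few comments on the content. The implications $(1)\Rightarrow(2)\Rightarrow(3)$ are as you say; $(1)\Rightarrow(2)$ does need the remark that $\eta^t$ is defined for all $t>0$, which holds because an operator-stable law is infinitely divisible by definition. For $(3)\Rightarrow(1)$ you first need $\eta\in\mathcal{ID}(\mathbb{V})$ and infinitesimality of the triangular array before Theorem \ref{theo-conv} even applies; both follow from fullness (adaptedness) of $\eta$ together with the central-limit-problem characterization of limits of infinitesimal triangular arrays, which you invoke but could be a little more explicit about. Your use of convergence of types to produce, for each $t>0$, an invertible $A(t)$ with $A(t)(\eta)=\eta^t$ is correct, and the observation that $\mathbb{G}=\{(t,A):A(\eta)=\eta^t\}$ is a closed subgroup of $\mathbb{R}_+^\times\times GL(\mathbb{V})$ is the right object to look at (it is the graph version of the decomposability group of $\eta$ in \cite{Hazod2001,JuMa}).

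Where I would push back is on your assessment that the final splitting step is the ``crux'' requiring heavy structure theory of compact-by-$\mathbb{R}$ extensions. It is in fact quite soft, precisely because the base is one-dimensional. Since $\mathbb{G}$ is a closed subgroup of a Lie group it is a Lie group (Cartan); the projection $\pi:\mathbb{G}\to\mathbb{R}_+^\times$ is a surjective homomorphism with kernel $\mathrm{Inv}(\eta)$, hence its restriction to the identity component $\mathbb{G}^0$ is still onto (the image is an open subgroup of the connected group $\mathbb{R}_+^\times$). At the Lie algebra level $d\pi:\mathfrak{g}\to\mathbb{R}$ is onto; pick any $X\in\mathfrak{g}$ with $d\pi(X)=1$. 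The line $\mathbb{R}X$ is automatically a Lie subalgebra, so $s\mapsto\exp(sX)$ is a one-parameter subgroup of $\mathbb{G}$, and $\pi(\exp(sX))=e^{s}$ shows it projects isomorphically onto $\mathbb{R}_+^\times$. Writing $\exp(sX)=(e^s,B(e^s))$ gives a continuous one-parameter subgroup $t\mapsto B(t)$ of $GL(\mathbb{V})$, necessarily of the form $B(t)=t^E$, with $t^E(\eta)=\eta^t$. No general theory of extensions by compact kernels is needed; what compactness of $\mathrm{Inv}(\eta)$ (equivalently, fullness of $\eta$) is genuinely used for in this argument is the convergence-of-types step and the closedness of $\mathbb{G}$, not the splitting.
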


\bigskip

For $\mu \in \mbox{DOA}_s(\eta ),$ the choice of normalization sequence $%
T_{n}$ is in general not unique. In particular, we can adjust $T_{n}$ using
the symmetries of the limiting distribution $\eta $ and the convergence
still holds.

\begin{defin}[Definition 1.2.8. \protect\cite{Hazod2001} ]
Let $\eta \in \mathcal{M}^{1}(\mathbb{V})$ be non-degenerate. Let $\mbox{Sym}%
(\eta )$ be the set of all $A\in GL(\mathbb{V})$ such that there exists some 
$a\in \mathbb{V}$ such that $A(\eta )\ast \delta _{a}=\eta .$ The group $%
\mbox{Sym}(\eta )$ is called the symmetry group of $\eta $. It is a closed
subgroup of $GL(\mathbb{V}).$ The invariance group $\mbox{Inv}(\eta )$ is
the set of all $A\in GL(\mathbb{V})$ such that $A(\eta )=\eta $. The group $%
\mbox{Inv}(\eta )$ is a closed subgroup of $\mbox{Sym}(\eta ).$
\end{defin}

\bigskip

The following technical result is important for our purpose. It says that we
can always adjust the normalization sequence by elements in $\mbox{Inv}(\eta
),$ so that the new normalization sequence has nice regular variation
properties.

\begin{theo}[Theorem 1.10.19 \protect\cite{Hazod2001} ]
\label{RegularVariation} Suppose $\mu $ is in the strict domain of
attraction of a full operator stable law $\eta ,$ that is, there exists a
sequence of invertible matrices $B_{n}\in GL(\mathbb{V})$ such that 
\begin{equation*}
B_{n}^{-1}\mu ^{(n)}\Longrightarrow \eta .
\end{equation*}%
Then there exists a modified normalization sequence $\{B_{n}^{\prime
}=B_{n}S_{n}\}$, $S_{n}\in \mbox{\em Inv}(\eta ),$ hence still fulfilling%
\begin{equation*}
(B_{n}^{\prime })^{-1}\mu ^{(n)}\Longrightarrow \eta ,
\end{equation*}%
with the property that $\{B_{n}^{\prime }\}$ has regular variation in the
sense that 
\begin{equation*}
B_{n}^{\prime }(B_{\left\lfloor nt\right\rfloor }^{\prime })^{-1}\rightarrow
t^{-E},
\end{equation*}%
where the convergence is uniform in $t$ on compact subsets of $%
\mathbb{R}
_{+}^{\times }.$
\end{theo}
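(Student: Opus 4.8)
The plan is to read off the local behaviour of $B_n$ from the convergence criteria of Theorem~\ref{theo-conv}, pin the ``type'' of $B_n$ relative to $B_{\lfloor nt\rfloor}$ down to an element of the invariance group $\mbox{Inv}(\eta)$, and then remove that residual ambiguity by a selection argument inside $\mbox{Inv}(\eta)$. First, put $\mu _n=B_n^{-1}\mu$, so $\mu_n^{(n)}\Longrightarrow\eta$ by hypothesis; by the equivalence $(1)\Leftrightarrow(2)$ of Theorem~\ref{theo-conv} we get $B_n^{-1}\mu^{(\lfloor nt\rfloor)}=\mu_n^{(\lfloor nt\rfloor)}\Longrightarrow\eta^{t}$, uniformly for $t$ in compact subsets of $(0,\infty)$. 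Substituting $m=\lfloor nt\rfloor$ in the hypothesis gives $B_{\lfloor nt\rfloor}^{-1}\mu^{(\lfloor nt\rfloor)}\Longrightarrow\eta$. Setting $C_n(t):=B_n^{-1}B_{\lfloor nt\rfloor}\in GL(\mathbb{V})$, we have $C_n(t)\bigl(B_{\lfloor nt\rfloor}^{-1}\mu^{(\lfloor nt\rfloor)}\bigr)=B_n^{-1}\mu^{(\lfloor nt\rfloor)}$: a sequence of linear images of measures converging to the full law $\eta$ converges to the full law $\eta^{t}$.

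Next I would invoke the convergence-of-types theorem for full measures (if $\rho_n\Longrightarrow\rho$ and $A_n\rho_n\Longrightarrow\sigma$ with $\rho,\sigma$ full, then $\{A_n\}$ is relatively compact in $GL(\mathbb{V})$ and every accumulation point $A$ satisfies $A\rho=\sigma$). This shows $\{C_n(t)\}_n$ is relatively compact and each limit point $A$ satisfies $A\eta=\eta^{t}$; since $\eta$ is strictly operator-stable with exponent $E$ we have $t^{E}\eta=\eta^{t}$, so the set of such $A$ is exactly $t^{E}\mbox{Inv}(\eta)$. Because $\eta$ is full, $\mbox{Inv}(\eta)$ is a compact subgroup of $GL(\mathbb{V})$; after replacing the scalar product on $\mathbb{V}$ by its average over the Haar measure of $\mbox{Inv}(\eta)$ I may assume $\mbox{Inv}(\eta)\subset O(\mathbb{V})$. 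One also checks $\mbox{Inv}(\eta)=\mbox{Inv}(\eta^{s})$ for all $s>0$, whence $\mbox{Inv}(\eta)$ is normal in $\mbox{Sym}(\eta)$ and conjugation by $t^{E}$ is an automorphism of the compact group $\mbox{Inv}(\eta)$. So $\mbox{dist}\bigl(C_n(t),t^{E}\mbox{Inv}(\eta)\bigr)\to0$, and, using the uniformity in $t$ together with $\lfloor nt\rfloor/(nt)\to1$ and the approximate multiplicativity $C_n(t_1)\,C_{\lfloor nt_1\rfloor}(t_2)=C_n(t_1t_2)+o(1)$, the image $\overline{C_n(t)}$ in $GL(\mathbb{V})/\mbox{Inv}(\eta)$ converges to $\overline{t^{E}}$, and $t\mapsto\overline{t^{E}}$ is a genuine one-parameter subgroup of the quotient. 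I note that any modification $B_n'=B_nS_n$ with $S_n\in\mbox{Inv}(\eta)$ automatically still satisfies $(B_n')^{-1}\mu^{(n)}=S_n^{-1}\bigl(B_n^{-1}\mu^{(n)}\bigr)\Longrightarrow\eta$, since $S_n^{-1}$ is an isometry of $\mathbb{V}$ fixing $\eta$, so no constraint is lost when we adjust by $\mbox{Inv}(\eta)$.

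The remaining, and genuinely delicate, point is to choose $S_n\in\mbox{Inv}(\eta)$ so that $B_n'(B_{\lfloor nt\rfloor}')^{-1}\to t^{-E}$ \emph{on the nose}, not merely modulo $\mbox{Inv}(\eta)$. Picking $g_n(t)\in\mbox{Inv}(\eta)$ with $C_n(t)=t^{E}g_n(t)+o(1)$, one is asked to solve the approximate cohomological relation $\bigl(t^{-E}S_n^{-1}t^{E}\bigr)\,g_n(t)\,S_{\lfloor nt\rfloor}\to\mathrm{id}$ in $\mbox{Inv}(\eta)$, uniformly on compact $t$-sets; this is exactly the assertion that an operator function regularly varying modulo a compact group can be modified within that group to be genuinely regularly varying with the prescribed exponent. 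I would carry it out by a diagonal extraction producing, along a subsequence, limits $\mathcal{C}(t)\in t^{E}\mbox{Inv}(\eta)$ for all rational $t$; the one-parameter-subgroup property from the previous paragraph together with normality of $\mbox{Inv}(\eta)$ lets one organise these into an honest homomorphism $t\mapsto\mathcal{C}(t)$, and a measurable section of $GL(\mathbb{V})\to GL(\mathbb{V})/\mbox{Inv}(\eta)$ lets one absorb $\mathcal{C}$ into the normalization; interpolation from rational to real $t$ is handled by the uniformity already established, and one checks the construction can be made independent of the subsequence. Everything up to this regularization step is a direct application of Theorem~\ref{theo-conv} and standard structure theory of full infinitely divisible laws; the lifting of the quotient convergence to genuine convergence is the step I expect to require the most care, and it is where one really uses compactness (and normality) of $\mbox{Inv}(\eta)$.
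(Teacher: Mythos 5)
You should first note that the paper does not prove this statement at all: it is labeled ``Theorem 1.10.19 \cite{Hazod2001}'' and is imported verbatim from Hazod and Siebert's monograph, so there is no internal proof to compare against. What follows is an assessment of your sketch on its own merits.

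Your setup is the standard one and the first two-thirds are sound: using the equivalence $(1)\Leftrightarrow(2)$ of Theorem~\ref{theo-conv} to get $B_n^{-1}\mu^{(\lfloor nt\rfloor)}\Rightarrow\eta^t$ uniformly on $t$-compacts, applying the convergence-of-types theorem for full laws to $C_n(t)=B_n^{-1}B_{\lfloor nt\rfloor}$ to get relative compactness with limit points in $t^E\mbox{Inv}(\eta)$, and observing that $\mbox{Inv}(\eta)$ is compact and normalized by $\{t^E\}_{t>0}$ (your phrasing ``normal in $\mbox{Sym}(\eta)$'' is stronger than what you use or establish; what you actually need and do verify is that conjugation by $t^E$ preserves $\mbox{Inv}(\eta)$ because $t^E\eta^{1/t}=\eta$). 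The remark that $(B_n')^{-1}\mu^{(n)}\Rightarrow\eta$ for any $S_n\in\mbox{Inv}(\eta)$ is correct but, as written, needs compactness of $\mbox{Inv}(\eta)$ and a subsequence argument, not merely that each $S_n$ fixes $\eta$: $S_n$ varies with $n$, so ``fixing $\eta$'' alone doesn't transfer the limit.

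The genuine gap is exactly where you flag it. Identifying the limit set of $C_n(t)$ as $t^E\mbox{Inv}(\eta)$ and passing to the quotient $GL(\mathbb{V})/\mbox{Inv}(\eta)$ gives regular variation only modulo $\mbox{Inv}(\eta)$; the content of the theorem is that this ambiguity can be killed by a single sequence $(S_n)$ that works simultaneously for all $t$. Your ``diagonal extraction over rational $t$, then a measurable section of $GL(\mathbb{V})\to GL(\mathbb{V})/\mbox{Inv}(\eta)$, then check independence of the subsequence'' is not an argument but a list of hopes: a diagonal extraction produces $S_n$ along a subsequence only, a measurable section need not be continuous at the identity coset (and you need continuity to control $t^{-E}S_n^{-1}t^E\cdot g_n(t)\cdot S_{\lfloor nt\rfloor}$ near $t=1$), and there is no mechanism in your write-up that forces the construction to stabilize across the full sequence. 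The way this is actually done in the literature is not by extraction but by a constructive lifting: one exploits that the projection $GL(\mathbb{V})\to GL(\mathbb{V})/\mbox{Inv}(\eta)$ has continuous local sections near the identity coset (a compact Lie group acting freely), chooses $S_n$ inductively over dyadic blocks so that $S_n^{-1}C_n(t)S_{\lfloor nt\rfloor}$ stays in a small neighbourhood of $t^E$ across each block, and then uses the approximate cocycle identity $C_n(t_1)C_{\lfloor nt_1\rfloor}(t_2)\approx C_n(t_1t_2)$ together with conjugation-invariance of $\mbox{Inv}(\eta)$ to glue the blocks. Without some such explicit recursion, your proposal shows that a modification \emph{along a subsequence} would work, which is strictly weaker than the theorem. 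So: the reduction to a lifting problem in $\mbox{Inv}(\eta)$ is correct and standard, but the lifting itself --- the heart of the theorem --- is not carried out.
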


\subsection{\textbf{Two} more \textbf{examples on }$%
\mathbb{Z}
^{2}$}

In this subsection we discuss two examples on $%
\mathbb{Z}
^{2}$ that are in the strict domain of operator-attraction of some
operator-stable laws. For later use, we include the additional requirement
that the inverse of the normalization sequence preserve the lattice $%
\mathbb{Z}
^{2}.$

Note that a key point in these examples is that they describe probability
measures supported on the square lattice $\mathbb{Z}^2\subset \mathbb{R}^2$
which implies a certain rigidity in the choice of the Euclidean structure on 
$\mathbb{R}^2$.

\begin{exa}
Let $e_{1}$, $e_{2}$ be the standard basis for $%
\mathbb{R}
^{2}.$ Consider the two unit vectors $u_{1}=\frac{1}{\sqrt{2}}(e_{1}+e_{2})$
and $u_{2}=\frac{1}{\sqrt{1+\pi ^{2}}}(e_{1}+\pi e_{2})$. Let $\mu _{1},\mu
_{2}$ be probability measures defined by 
\begin{equation*}
\mu _{1}(x_{1},x_{2})=\frac{c_{1}}{(1+\left\vert x_{1}\right\vert )^{\alpha
+1}}\boldsymbol{1}_{\{x_{1}=x_{2}\}},
\end{equation*}%
\begin{equation*}
\mu _{2}(x_{1},x_{2})=\frac{c_{2}}{(1+\left\vert x_{1}\right\vert )^{\beta
+1}}\boldsymbol{1}_{\{\left\vert x_{2}-\pi x_{1}\right\vert \leq 1\}},
\end{equation*}%
where $c_{1}$ and $c_{2}$ are normalizing constants and $\alpha ,\beta \in
(0,2) $. Take 
\begin{equation*}
\mu =\frac{1}{2}\left(\mu _{1}+\mu _{2}\right).
\end{equation*}%
Write $P=(u_{1},u_{2})=\left( 
\begin{array}{cc}
\frac{1}{\sqrt{2}} & \frac{1}{\sqrt{1+\pi ^{2}}} \\ 
\frac{1}{\sqrt{2}} & \frac{\pi }{\sqrt{1+\pi ^{2}}}%
\end{array}%
\right) $ and $E=P\left( 
\begin{array}{cc}
\frac{1}{\alpha } & 0 \\ 
0 & \frac{1}{\beta }%
\end{array}%
\right) P^{-1}. $ Then we can check that for $\Omega \in \mathcal{B}(\mathbb{%
S}_{2})$ with $u_1,u_2\not\in \partial \Omega$, we have 
\begin{equation*}
\lim_{t\rightarrow \infty }t\mu (\{s^{E}x:x\in \Omega ,\;s>t\})=\lambda _{1}%
\boldsymbol{1}_{\left\{ u_{1}\right\} }(\Omega )+\lambda _{2}\boldsymbol{1}%
_{\left\{ u_{2}\right\} }(\Omega ),
\end{equation*}%
where $\lambda _{1}$ and $\lambda _{2}$ are positive constants determined by 
$\mu $. Consider the generalized Poisson law $\eta $ with L\'{e}vy measure $%
W $ given by 
\begin{equation*}
W(B)=\sum_{i=1,2}\int_{0}^{\infty }\mathbf{1}_{B}(r^{E}u_{i})\frac{\lambda
_{i}}{r}\frac{dr}{r}.
\end{equation*}
Note that $W$ is supported on the union of the one-dimensional subspaces $%
\mathbb{R}
u_{1}$ and $%
\mathbb{R}
u_{2}$ and $\eta $ is operator-stable with exponent $E$. The law $\eta$ can
be viewed as a product of two one-dimensional symmetric stable laws
supported on $%
\mathbb{R}
u_{1}$ and $%
\mathbb{R}
u_2$ and of exponents $\alpha$ and $\beta$, respectively (the exact scale
parameter for each of these stable laws is determined by the constants $%
\lambda_1,\lambda_2$). From the convergence theorem, Theorem \ref{theo-conv}%
, $\mu \in \mbox{DNOA}_{s}(\eta ).$ Set 
\begin{equation*}
B_{n}=\left\lfloor P\left( 
\begin{array}{cc}
n^{\frac{1}{\alpha }} & 0 \\ 
0 & n^{\frac{1}{\beta }}%
\end{array}%
\right) P^{-1}\right\rfloor
\end{equation*}
where $\left\lfloor \cdot \right\rfloor $ means take integer parts of each
matrix entry. Then $n^{E}-B_{n}$ is a matrix with entries in $[0,1)$ and it
follows that $B_{n}\cdot n^{-E}\rightarrow I.$ We conclude that $%
B_{n}^{-1}\mu ^{(n)}\Longrightarrow \eta $.
\end{exa}

\bigskip

\begin{exa}
Take $u_{1},u_{2}, P,\alpha ,\beta $ and $E$ the same as in the previous
example. Write $\delta _{r}(x_{1},x_{2})=(r^{1/\alpha }x_{1},r^{1/\beta
}x_{2})$. Let $\mathbb{S}$ denote the Euclidean unit circle, $\sigma $ the
Lebesgue measure on $\mathbb{S}$. Let $\Gamma $ be the union of the two arcs 
$[0,\frac{\pi }{2}]$ and $[\pi ,\frac{3\pi }{2}]$ of $\mathbb{S}$. Define
the L\'{e}vy measure $W$ as 
\begin{equation*}
W(B)=\int_{\Gamma }\int_{0}^{\infty }\mathbf{1}_{B}(\delta _{r}x)\frac{%
\sigma (dx)}{r}\frac{dr}{r}.
\end{equation*}%
Then $W$ is supported in the cone $\{x\in 
\mathbb{R}
^{2}:x_{1}x_{2}\geq 0\}.$ Consider measure $PW(\cdot )=W(P^{-1}\cdot ),$
that is the pushforward of measure $W$ under linear transformation $P.$ Let $%
\eta $ be the generalized Poisson law with L\'{e}vy measure $PW.$ Take a
discrete approximation $\mu $ of $PW$ supported on $%
\mathbb{Z}
^{2}$ by setting 
\begin{equation*}
\mu (x)=PW([x_{1},x_{1}+1)\times \lbrack x_{2},x_{2}+1)).
\end{equation*}%
One can check that $nB_{n}^{-1}\mu \rightarrow PW$ weakly with $%
B_{n}=\left\lfloor n^{E}\right\rfloor .$ It follows from Theorem \ref%
{theo-conv} that 
\begin{equation*}
B_{n}^{-1}\mu ^{(n)}\Longrightarrow \eta .
\end{equation*}
\end{exa}

\section{Functionals of the occupation time vector}

\setcounter{equation}{0}

Given a probability measure $\mu$ on the lattice $\mathbb{Z}^d$, let $%
(X_i)_0^\infty$ be the associated random walk. Let $(l(n,x))_{x\in \mathbb{Z}%
^d}$ be the occupation time vector at time $n$ where $l(n,x)=\#\{k\in
\{0,\dots n\}: X_k=x\}$. Let $F:[0,\infty)\rightarrow [0,\infty)$.

In this section we introduce basic natural hypotheses on $\mu$ and $F$ under
which we can derive the log-asymptotic behavior of 
\begin{equation*}
\mathbf{E }\left(e^{-\sum_{x\in\mathbb{Z}^d}F(l(n,x))}\right).
\end{equation*}

\begin{defin}[Convergence assumption]
\label{def-conv} We say that $\mu $ satisfies the convergence assumption (C-$%
B_n$) if there exists a sequence of invertible matrices $B_{n}\in \mathbb{Z}%
^{d\times d}$ and a probability distribution $\eta$ such that 
\begin{equation}
B_{n}^{-1}\mu ^{(n)}\Longrightarrow \eta .  \tag{C-$B_{n}$}
\label{ConvAssum}
\end{equation}
\end{defin}

\begin{rem}
Note that (C-$B_n$) requires the matrices $B_n$ to have integer entries so
that $B_n\mathbb{Z}^d\subset \mathbb{Z}^d$. Note also that the distribution $%
\eta$ is strictly operator-stable.
\end{rem}

Under the convergence assumption (C-$B_n$), \cite{Griffin1986} provides a
local limit theorem that plays an important role in the proof of the uniform
large deviation principle.

\begin{theo}[Theorem 6.4 \protect\cite{Griffin1986}]
\label{LocalLimit}Suppose $\mu $ is in the domain of attraction of a
symmetric, adapted strictly operator-stable law $\eta $ on $%
\mathbb{R}
^{d}$ with density $g$, that is, there exists a sequence of invertible
matrices $B_{n}$ such that 
\begin{equation*}
B_{n}^{-1}\mu ^{(n)}\Longrightarrow \eta .
\end{equation*}%
Then 
\begin{equation*}
\lim \sup_{n\rightarrow \infty }\sup_{x\in 
\mathbb{Z}
^{d}}\left\vert \det B_{n}\right\vert \left\vert \mu ^{(n)}(x)-\left\vert
\det B_{n}^{-1}\right\vert g(B_{n}^{-1}x)\right\vert =0.
\end{equation*}
\end{theo}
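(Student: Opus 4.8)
The plan is to follow the structure of the classical local limit theorem proof via Fourier inversion, adapting it to the operator-stable, anisotropic normalization setting. First I would write, for $x\in\mathbb{Z}^d$, the exact Fourier representation
\[
\mu^{(n)}(x)=\frac{1}{(2\pi)^d}\int_{[-\pi,\pi]^d}\hat{\mu}(\xi)^n e^{-i\langle x,\xi\rangle}\,d\xi,
\]
and compare it to the analogous (continuous) inversion formula for $|\det B_n^{-1}|g(B_n^{-1}x)$, namely
\[
|\det B_n^{-1}|\,g(B_n^{-1}x)=\frac{1}{(2\pi)^d}\int_{\mathbb{R}^d}\hat\eta(\xi)e^{-i\langle B_n^{-1}x,\xi\rangle}|\det B_n^{-1}|\,d\xi
=\frac{1}{(2\pi)^d}\int_{\mathbb{R}^d}e^{-\psi((B_n^{-1})^{T}\zeta)}e^{-i\langle x,\zeta\rangle}\,d\zeta,
\]
after the change of variables $\zeta=(B_n^T)^{-1}\xi$. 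So after changing variables $\eta=(B_n^T)^{-1}\xi$ in the lattice integral as well (or equivalently substituting $\xi=B_n^T\zeta$ in the first display), the difference $|\det B_n|(\mu^{(n)}(x)-|\det B_n^{-1}|g(B_n^{-1}x))$ becomes $(2\pi)^{-d}$ times an integral of $e^{-i\langle x,\zeta\rangle}$ against $\hat\mu(B_n^T\zeta)^n-e^{-\psi((B_n^{-1})^T\zeta)}$ over the (large, as $n\to\infty$) region $(B_n^T)^{-1}[-\pi,\pi]^d$, plus the tail of the Gaussian-type integral outside that region. Taking absolute values inside makes this bound uniform in $x$, so it suffices to show this integral of $|\hat\mu(B_n^T\zeta)^n-e^{-\psi((B_n^{-1})^T\zeta)}|$ over $(B_n^T)^{-1}[-\pi,\pi]^d$ (extended by $e^{-\psi}$ on the complement) tends to $0$.

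Next I would split this integral into three regions: (i) a fixed compact set $K$; (ii) an intermediate annular region $K^c\cap (B_n^T)^{-1}[-\pi,\pi]^d$; (iii) the complement of $(B_n^T)^{-1}[-\pi,\pi]^d$, where only the $e^{-\psi}$ term is present. On region (i), the hypothesis $\mu\in\mathrm{DOA}_s(\eta)$ together with Theorem \ref{theo-conv} (equivalence of $B_n^{-1}\mu^{(n)}\Rightarrow\eta$ with $n(1-\hat\mu_n)\to\psi$ uniformly on compacts, where $\hat\mu_n(\zeta)=\hat\mu(B_n^T\zeta)$) gives $\hat\mu(B_n^T\zeta)^n\to e^{-\psi(\zeta)}$ uniformly on $K$; since the integrand is uniformly bounded there, dominated convergence kills this piece. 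On region (iii), $e^{-\psi}$ is integrable on $\mathbb{R}^d$ (since $\eta$ has a bounded continuous density $g$, $\hat\eta=e^{-\psi}\in L^1$), and the region recedes to infinity because $\|(B_n^T)^{-1}\|\to 0$ coordinatewise (the normalization blows up), so this piece is a vanishing tail integral. The real work is region (ii): I would need a quantitative upper bound of the form $|\hat\mu(B_n^T\zeta)|^n\le e^{-cn(1-\mathrm{Re}\,\hat\mu(B_n^T\zeta))}$ and then a lower bound $n(1-\mathrm{Re}\,\hat\mu(B_n^T\zeta))\ge c'\psi_{\mathrm{re}}(\zeta)$ (or at least $\ge$ something growing) valid uniformly for $\zeta$ in the intermediate region, so that $|\hat\mu(B_n^T\zeta)|^n$ is dominated by an integrable function and its integral over the receding part of the region vanishes. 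This is exactly where the regular variation of the normalization (Theorem \ref{RegularVariation}) and the structure of $\psi$ (operator-stable, with $\mathrm{Re}\,\psi(\zeta)$ comparable to a homogeneous anisotropic norm raised to a power) enter: one uses $B_{\lfloor nt\rfloor}' \approx (B_n')(t^{-E})^{-1}$, i.e. $B_n'(B_{\lfloor nt\rfloor}')^{-1}\to t^{-E}$, to transfer the compact-set convergence to dyadic scales $2^{-j}K\setminus 2^{-j-1}K$ and sum the resulting geometric-type series.

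The main obstacle, and the point requiring the most care, is the uniform control on the intermediate region (ii), specifically establishing a good lower bound on $n(1-\mathrm{Re}\,\hat\mu(B_n^T\zeta))$ that does not degenerate as $\zeta$ ranges over the growing region. One subtlety is that $\mu$ is only assumed to be in the (possibly non-normal) domain of strict operator-attraction, so the normalization $B_n$ need not be exactly $n^E$; here I would replace $B_n$ by the regularly varying modification $B_n'$ from Theorem \ref{RegularVariation} (legitimate since $B_n'=B_nS_n$ with $S_n\in\mathrm{Inv}(\eta)$, and $\mathrm{Inv}(\eta)$ preserves both $\eta$ and its density $g$, hence the statement is unchanged), and then use the scaling relation $B_n'(B_{\lfloor nt\rfloor}')^{-1}\to t^{-E}$ as above. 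A second subtlety is that $\hat\mu$ is periodic on the lattice and may vanish or have modulus close to $1$ at nonzero points of $[-\pi,\pi]^d$; adaptedness of $\mu$ (and symmetry) is used to ensure $|\hat\mu(\xi)|<1$ for $\xi\in[-\pi,\pi]^d\setminus\{0\}$, and one needs a quantitative version near $\xi=0$ coming from the convergence hypothesis, with the far-from-$0$ part contributing only exponentially small terms that are absorbed trivially. Once these estimates are in place, combining (i)–(iii) gives the stated uniform-in-$x$ convergence to $0$. Since this is precisely Theorem 6.4 of \cite{Griffin1986}, I would in the paper simply cite it; the sketch above records how its proof goes.
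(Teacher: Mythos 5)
The paper does not prove this statement: it is imported verbatim as Theorem 6.4 of Griffin's paper \cite{Griffin1986}, and the only comment the paper makes is the remark that Griffin's proof relies on the Fourier transform. Your decision at the end --- that in the paper one should simply cite Griffin --- therefore matches the paper exactly, and your Fourier-inversion sketch is a reasonable outline of how such a local limit theorem is proved (compact-set convergence from Theorem \ref{theo-conv}, integrability of $e^{-\psi}$ for the far tail, and quantitative control on the intermediate shell using adaptedness and regular variation of the normalizers). One notational caution in the sketch: with the convention $B_n^{-1}\mu^{(n)}\Rightarrow\eta$, the matrix $B_n$ grows, so after substituting $\xi=(B_n^T)^{-1}\zeta$ in $\mu^{(n)}(x)=(2\pi)^{-d}\int_{[-\pi,\pi]^d}\hat\mu(\xi)^n e^{-i\langle x,\xi\rangle}d\xi$ the domain becomes the expanding region $B_n^T[-\pi,\pi]^d$ (not $(B_n^T)^{-1}[-\pi,\pi]^d$), the Jacobian $|\det B_n^{-1}|$ exactly cancels the prefactor $|\det B_n|$, and both $\hat\mu$ and $\psi$ end up evaluated at the same rescaled argument, so the integrand to control is $\bigl|\hat\mu((B_n^T)^{-1}\zeta)^n-e^{-\psi(\zeta)}\bigr|$. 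As written, your two Fourier-transform arguments do not match and the stated region shrinks rather than grows; this is presumably a typo, but it obscures exactly the cancellation that makes the argument uniform in $x$.
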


\begin{rem}
Note that the density $g$ of an operator-stable law is always smooth. In 
\cite{Donsker1979} it is essentially proved, although not stated explicitly,
that given the local limit theorem, the scaled occupation time measures
satisfy a uniform large deviation principle in $\mathcal{L}_1.$ We will
state and outline the proof of the large deviation principles later in this
paper.
\end{rem}

\begin{rem}
\bigskip It is somewhat surprising that, in this case, the ``weak limit
assumption'' always implies the local limit theorem. The proof in \cite%
{Griffin1986} relies on the Fourier transform. On the Heisenberg group,
there are measures that converges to a (Heisenberg group) Gaussian law, but
do not satisfy the local limit theorem.
\end{rem}

\begin{exa}
Fix $\alpha,\beta\in (0,2)$ and consider the probability measure $\mu$ on $%
\mathbb{Z}^2\subset \mathbb{R}^2$ given by 
\begin{equation*}
\mu= \frac{1}{2}\left(\sum_{x\in \mathbb{Z}} c_\alpha (1+|x|)^{-1-\alpha} 
\delta_{(x,0)}+ \sum_{x\in \mathbb{Z}} c_\beta (1+|y|)^{-1-\beta}
\delta_{(0,y)}\right) .
\end{equation*}
Set $E=\left(%
\begin{array}{cc}
1/\alpha & 0 \\ 
0 & 1/\beta%
\end{array}%
\right)$ and $\eta= \eta_{\alpha} \otimes \eta_\beta$ where $\eta_\alpha,
\eta_\beta$ are (appropriately scaled) one dimensional symmetric stable laws
with parameters $\alpha$, $\beta$, respectively. Then condition (C-$B_n$)
is satisfied with $B_n= \left(%
\begin{array}{cc}
\lfloor n^{1/\alpha} \rfloor & 0 \\ 
0 & \lfloor n^{ 1/\beta}\rfloor%
\end{array}%
\right)$. Theorem \ref{LocalLimit} provides a local limit theorem for $%
\mu^{(n)}((x,y))$ in the form 
\begin{equation*}
n^{1/\alpha+1/\beta}|\mu^{(n)}((x,y))-
n^{-(1/\alpha+1/\beta)}f^{c_1}_\alpha(x/n^{1/\alpha})f^{c_2}_\beta(y/n^{1/%
\beta})|\rightarrow 0
\end{equation*}
where $f^t_\alpha$ is the density of the symmetric stable semigroup, i.e.,
has Fourier transform $e^{-t|\xi|^\alpha}$ and $c_1,c_2$ are appropriate
constants.
\end{exa}

Next we introduce a scaling assumption regarding the function $F$. It is the
operator-stable analog of the scaling assumption in \cite{Biskup2001}.

\begin{defin}[Scaling assumption]
\label{def-scal} Let $B_n$ be as in condition (C-$B_n$). We say that a
function $F:[0,\infty )\rightarrow \lbrack 0,\infty )$ satisfies the scaling
assumption (S-$B_n$-$a_n$) if $F$ is concave, sub-additive, increasing with $%
F(0)=0$ and there exist a non-decreasing sequence $n\rightarrow a_{n}\in 
\mathbb{N}$ and a limiting function $\widetilde{F}:[0,\infty )\rightarrow
\lbrack 0,\infty ),$ $\widetilde{F}$ not identically zero, such that for $%
y>0,$%
\begin{equation}
\lim_{n\rightarrow \infty }\frac{a_{n}\det (B_{a_{n}})}{n}F\left( \frac{n}{%
\det (B_{a_{n}})}y\right) =\widetilde{F}(y),  \tag{S-$B_{n}$-$a_n$}
\end{equation}%
uniformly over compact sets in $(0,\infty ).$
\end{defin}

The following technical proposition is crucial. It is analogous to \cite[%
Proposition 1.1]{Biskup2001}. The proof is given in the Appendix.

\begin{pro}
\label{Homogenous}Assume the convergence assumption \emph{(C-$B_{n}$)} and
the scaling assumption \emph{(S-$B_{n}$-$a_{n}$)} as above. Then there
exists $\gamma \in \lbrack 0,1]$ such that 
\begin{equation*}
\widetilde{F}(y)=\widetilde{F}(1)y^{\gamma },\text{ }y>0,
\end{equation*}%
Moreover, there exists $\kappa>0$ such that 
\begin{equation*}
\lim_{n\rightarrow \infty }\frac{a_{\left\lfloor \lambda n\right\rfloor }}{%
a_{n}}=\lambda ^{\kappa }\text{ for all }\lambda \in 
\mathbb{R}
^{+} \mbox{
and } \lim_{n\rightarrow \infty }\frac{\log a_{n}}{\log n}=\kappa .
\end{equation*}
\end{pro}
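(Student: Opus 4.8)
\textbf{Proof plan for Proposition \ref{Homogenous}.}

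The plan is to first extract a functional equation for $\widetilde F$ from the scaling assumption by comparing the defining limit along the subsequences $a_n$ and $a_{\lfloor \lambda n\rfloor}$, and then to solve it. Fix $\lambda>0$ and write $m=\lfloor \lambda n\rfloor$. On one hand, (S-$B_n$-$a_n$) evaluated at $n$ gives $\frac{a_n\det B_{a_n}}{n}F\big(\frac{n}{\det B_{a_n}}y\big)\to \widetilde F(y)$; on the other hand, evaluated at $m$ it gives $\frac{a_m\det B_{a_m}}{m}F\big(\frac{m}{\det B_{a_m}}y\big)\to \widetilde F(y)$ as well. The first obstacle is that these two statements involve the a priori unrelated normalizers $\det B_{a_n}$ and $\det B_{a_m}$. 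Here I would invoke the regular-variation structure available under (C-$B_n$): by Theorem \ref{RegularVariation} we may assume the normalization sequence $B_n$ is regularly varying with exponent $-E$, so $B_n B_{\lfloor nt\rfloor}^{-1}\to t^{-E}$ uniformly on compacts, hence $\det B_{n}/\det B_{\lfloor nt\rfloor}\to t^{-\operatorname{tr}E}$. Combined with monotonicity of $a_n$ and the requirement that $a_n\in\mathbb N$ is non-decreasing, a standard regular-variation/subadditivity argument (of the type in \cite{Biskup2001}) forces $a_{\lfloor\lambda n\rfloor}/a_n$ to converge; call the limit $g(\lambda)$. The relation $a_{\lfloor \lambda\mu n\rfloor}/a_n=(a_{\lfloor\lambda\mu n\rfloor}/a_{\lfloor\mu n\rfloor})(a_{\lfloor\mu n\rfloor}/a_n)$ shows $g$ is multiplicative and monotone, so $g(\lambda)=\lambda^\kappa$ for some $\kappa\ge 0$, which is exactly the second assertion; the companion statement $\log a_n/\log n\to\kappa$ then follows from $a_{\lfloor\lambda n\rfloor}/a_n\to\lambda^\kappa$ by the usual Karamata-type argument.

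With $\kappa$ in hand, the homogeneity of $\widetilde F$ comes out of the same comparison. Using $a_{a_n}\sim a_n^\kappa$-type estimates together with the regular variation of $\det B_\bullet$, rewrite the limit defining $\widetilde F(\lambda y)$ in terms of the normalizers at index $a_n$ and at a comparable index, and use sub-additivity and concavity of $F$ — concavity gives $F(cu)\le cF(u)$ for $c\ge1$ and the reverse inequality for $c\le1$ up to the right scaling factors — to sandwich $\widetilde F(\lambda y)$ between constant multiples of $\widetilde F(y)$. More precisely, this yields a Cauchy-type multiplicative functional equation $\widetilde F(\lambda y)=h(\lambda)\widetilde F(y)$ for all $\lambda,y>0$, where $h$ inherits monotonicity from the monotonicity of $F$ and of $a_n$. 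Since $\widetilde F$ is not identically zero and is a monotone limit of concave increasing functions (hence itself non-decreasing), $h$ is a non-negative monotone multiplicative function on $(0,\infty)$, so $h(\lambda)=\lambda^\gamma$ for some exponent $\gamma$. This gives $\widetilde F(y)=\widetilde F(1)y^\gamma$.

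It remains to check $\gamma\in[0,1]$. That $\gamma\ge 0$ is immediate from $\widetilde F$ non-decreasing and not identically zero. For $\gamma\le 1$ I would use sub-additivity of $F$, which passes to the limit: $\widetilde F(y_1+y_2)\le \widetilde F(y_1)+\widetilde F(y_2)$, forcing $y\mapsto \widetilde F(y)=\widetilde F(1)y^\gamma$ to be sub-additive on $(0,\infty)$, which holds precisely when $\gamma\le 1$. I expect the main obstacle to be the first step — rigorously establishing the convergence of $a_{\lfloor\lambda n\rfloor}/a_n$ and reconciling the two normalizing sequences $\det B_{a_n}$ — since everything else is then a relatively routine application of the theory of regularly varying functions and of monotone solutions of multiplicative Cauchy equations, following the template of \cite[Proposition 1.1]{Biskup2001} with the scalar normalizer $n^{1/2}$ (or $n^{1/\alpha}$) there replaced by $\det B_n$ and its regular variation supplied by Theorem \ref{RegularVariation}.
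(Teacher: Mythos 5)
Your plan follows essentially the same route as the paper: invoke Theorem \ref{RegularVariation} to regularize the normalization sequence, compare the defining limit of $\widetilde F$ along $n$ and $\lfloor\lambda n\rfloor$, show $a_{\lfloor\lambda n\rfloor}/a_n$ converges, deduce multiplicativity and hence $\phi(\lambda)=\lambda^{\kappa}$, and then read off homogeneity of $\widetilde F$. The argument that $\gamma\in[0,1]$ by passing concavity/monotonicity/sub-additivity to the limit is clean and is arguably made more explicit in your plan than in the paper, which derives $\gamma$ from the formula $\gamma=\frac{1-\kappa\operatorname{tr}E-\kappa}{1-\kappa\operatorname{tr}E}$ without a separate verification.

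The genuine gap is exactly the one you flag as the expected obstacle: the convergence of $a_{\lfloor\lambda n\rfloor}/a_n$. You characterize it as a ``standard regular-variation/subadditivity argument,'' but this is where the paper's proof does real work, and the mechanism is quite specific. The paper first uses concavity of $F$ — which forces $y\mapsto\widetilde F_n(y)/y$ and $y\mapsto\widetilde F(y)/y$ to be non-increasing — to get a one-sided bound
\[
\widetilde F(y)\le \liminf_{n\to\infty}\frac{a_{\lfloor\lambda n\rfloor}}{a_n}\cdot\frac{1}{\lambda}\widetilde F(\lambda y),
\]
and this, together with monotonicity of $(a_n)$ giving $\limsup a_{\lfloor\lambda n\rfloor}/a_n\le 1$ for $\lambda\le 1$, yields boundedness away from $0$ and $\infty$. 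To promote boundedness to actual convergence, the paper takes a sub-sequential limit $\phi(\lambda)$, passes to the limit in the exact identity relating $\widetilde F_{\lfloor\lambda n\rfloor}$, $\widetilde F_n$ and the normalizers (using the regular variation $B_n'(B'_{\lfloor nt\rfloor})^{-1}\to t^{-E}$ and uniformity of convergence on compacts), obtains the functional equation
\[
\widetilde F(y)=\phi(\lambda)\cdot\frac{\phi(\lambda)^{\operatorname{tr}E}}{\lambda}\cdot\widetilde F\!\left(\frac{\lambda}{\phi(\lambda)^{\operatorname{tr}E}}y\right),
\]
and then shows the solution in $z=\phi(\lambda)$ is unique by a monotonicity argument (again using that $\widetilde F(y)/y$ is non-increasing). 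Uniqueness of the sub-sequential limit is what gives convergence. None of this is a generic subadditivity trick; it leans on concavity of $F$ in a structural way. Your plan should make this explicit rather than defer it.

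Two smaller issues: (i) the phrase ``$a_{a_n}\sim a_n^{\kappa}$-type estimates'' does not correspond to anything in the argument — the relevant object is $\det B_{a_n}$ and its regular variation, not an iterate $a_{a_n}$; (ii) once $\phi(\lambda)=\lambda^{\kappa}$ is established, the homogeneity of $\widetilde F$ drops out directly by substituting into the functional equation above (set $y=1$), so there is no need for a separate ``sandwich'' to derive a Cauchy equation — the exact relation is already there, and treating it as an approximate/sandwiched relation unnecessarily weakens the argument.
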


\begin{defin}
Following \cite{Biskup2001}, given $\mu$ satisfying (C-$B_n$) and a function 
$F:[0,\infty)\rightarrow [0,\infty)$, we say that the pair $(F,(B_{n}))$ is
in the $\gamma $-class, if there is a sequence $a_{n}$ such that the scaling
assumption (S-$B_{n}$-$a_{n}$) is satisfied, and the limiting function $%
\widetilde{F}$ is homogeneous with exponent $\gamma .$
\end{defin}

The following statement is the main result of this paper. 
The proof is given in Section \ref{sec-DVBK}. 

\begin{theo}
\label{Asymptotic} Fix a symmetric probability measure $\mu$ on $\mathbb{Z}%
^d $ and a function $F:[0,\infty)\rightarrow [0,\infty)$. Under the
convergence assumption \emph{(C-$B_{n}$)} and the scaling assumption \emph{%
(S-$B_{n}$-$a_{n}$)}, there exists a constant $k(\eta,\widetilde{F})\in
(0,\infty)$ such that 
\begin{equation}  \label{asy1}
\lim_{n\rightarrow \infty }\frac{a_{n}}{n}\log \mathbf{E}\left(
e^{-\sum_{x\in 
\mathbb{Z}
^{d}}F(l(n,x))}\right) =-k\left( \eta ,\widetilde{F}\right) .
\end{equation}
Further, for any $\epsilon>0$ small enough there is $R>1$ such that 
\begin{equation}  \label{asy2}
\lim_{n\rightarrow \infty }\frac{a_{n}}{n}\log \mathbf{E}\left( e^{
-\sum_{x\in 
\mathbb{Z}
^{d}}F(l(n,x))} \mathbf{1}_{B(R)}(B^{-1}_{a_n}(X_n)) \right) \ge
-(1+\epsilon) k\left( \eta ,\widetilde{F}\right) .
\end{equation}
Here $B(R)$ is the ball of radius $R$ in $\mathbb{R}^d$.
\end{theo}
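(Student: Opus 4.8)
The strategy follows Donsker--Varadhan's original approach together with the reductions introduced in \cite{Biskup2001}, adapted to the operator-stable scaling provided by the sequence $B_n$ and the scaling exponents $\gamma,\kappa$ from Proposition \ref{Homogenous}. The plan is to establish the two bounds \eqref{asy1}--\eqref{asy2} separately, and then to identify the constant $k(\eta,\widetilde F)$ via a variational formula for the limiting operator-stable process.

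First I would set up the right scaling. By Proposition \ref{Homogenous} and the scaling assumption (S-$B_n$-$a_n$), the natural spatial scale at which the occupation measure should be viewed is $B_{a_n}$, and the natural time scale is $n/a_n$; the functional $\sum_x F(l(n,x))$, after dividing by $n/a_n$, converges to $\int \widetilde F(\rho(x))\,dx = \widetilde F(1)\int \rho(x)^\gamma\,dx$ applied to a suitably normalized and rescaled occupation density $\rho$. So the first step is to rescale: introduce the empirical occupation measure $L_n(A) = \tfrac{1}{n}\#\{k\le n : B_{a_n}^{-1}X_k \in A\}$ on $\mathbb{R}^d$ and rewrite $\tfrac{a_n}{n}\sum_x F(l(n,x))$ in terms of $L_n$ smoothed at scale $B_{a_n}^{-1}$. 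The local limit theorem (Theorem \ref{LocalLimit}, from \cite{Griffin1986}), which holds precisely because of (C-$B_n$), is what lets one pass from the lattice sum to an integral against the density $g$ of $\eta$ and control the discretization error uniformly in $x$; concavity, subadditivity and monotonicity of $F$ (from the scaling assumption) are used here to sandwich $F(l(n,x))$ between expressions that linearize nicely under the scaling limit.

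Next I would prove the large deviation principle for $L_n$. The key input is that, under (C-$B_n$), the rescaled walk $B_{a_n}^{-1}X_{\lfloor (n/a_n)t\rfloor}$ converges to the operator-stable process with generator $L_\Theta$ (equivalently $A$, the generating functional of $\eta$); by the argument sketched in the Remark after Theorem \ref{LocalLimit} (which is essentially in \cite{Donsker1979}), the occupation measures $L_n$ satisfy a uniform (over starting points in a bounded set, in the topology of $\mathcal L_1$) large deviation principle at speed $n/a_n$ with rate function the Donsker--Varadhan $I_\eta$, i.e. $I_\eta(\phi^2) = \mathcal E_\eta(\phi,\phi)$ for $\phi\in\mathcal D(\mathcal E_\eta)$ with $\|\phi\|_2=1$. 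Feeding this into Varadhan's lemma with the bounded continuous-after-truncation functional $\rho\mapsto \widetilde F(1)\int\rho^\gamma$ gives
\begin{equation*}
\lim_{n\to\infty}\frac{a_n}{n}\log \mathbf{E}\left(e^{-\sum_x F(l(n,x))}\right) = -\inf_{\phi}\left\{\mathcal E_\eta(\phi,\phi) + \widetilde F(1)\int_{\mathbb{R}^d}|\phi(x)|^{2\gamma}\,dx\right\} =: -k(\eta,\widetilde F),
\end{equation*}
the infimum over $\phi\in L^2$ with $\|\phi\|_2=1$; a scaling argument in $\phi$ and the homogeneity $\widetilde F(y)=\widetilde F(1)y^\gamma$ show this infimum is finite and strictly positive, giving $k\in(0,\infty)$. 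For the truncated lower bound \eqref{asy2}, one notes that a near-optimal $\phi$ can be taken compactly supported in a ball $B(R_0)$, and then restricting the walk to stay (at time $n$) within $B(R)$ for $R>R_0$ changes the exponential rate by at most an arbitrarily small multiplicative factor — this is a standard "fill the profile inside a box" construction, where one forces the walk to behave like the conditioned process with occupation profile $\phi^2$, at the cost of a sub-exponential (in $n/a_n$) probability factor and an $\epsilon$-loss from the truncation.

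The main obstacle is the upper bound in the LDP for $L_n$ in the operator-stable, non-reversible-looking (but symmetric) and heavy-tailed setting: one must control the contribution of the walk escaping to spatial infinity (exponential tightness) and of the non-local jumps of the limiting process, since $\eta$ generally has a nontrivial Lévy part $W$ and the Dirichlet form $\mathcal E_\eta$ has a jump component $\int\int(f(x+y)-f(x))^2W(dy)dx$. The remedy, following Donsker--Varadhan and adjusting constants as indicated in the introduction, is to work with the compact torus approximation (wrapping $\mathbb{Z}^d$ to a large box $\simeq B_{a_n}$), prove the LDP there where exponential tightness is automatic, and then transfer to $\mathbb{R}^d$ using the local limit theorem to control boundary effects — the integrality of $B_n$ assumed in (C-$B_n$) is exactly what makes the torus/lattice compatibility work. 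A secondary technical point is that $F$ need not be bounded, so Varadhan's lemma is applied after a truncation $F\wedge M$ and a separate estimate shows the truncated tail is negligible at the relevant speed, using subadditivity of $F$ and the fact that $\max_x l(n,x)$ cannot be too large on the event of interest.
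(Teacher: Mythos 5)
Your proposal matches the paper's approach: rescale the occupation measure by $B_{a_n}$ at speed $n/a_n$, prove a large deviation principle in $\mathcal{L}_1$ on a torus quotient (Theorem \ref{TorusLDP}) for the upper bound and with Dirichlet boundary on a bounded domain (Theorem \ref{DirichletLDP}) for the lower bound, use the Griffin local limit theorem to upgrade from weak to $L^1$ topology, apply Varadhan's lemma, and identify the upper and lower constants via a truncation/rescaling argument (Lemma \ref{Constant}). One small remark: the truncation $F\wedge M$ you propose is unnecessary in the paper's organization — subadditivity of $F$ is used precisely to project the lattice sum onto the torus lattice, after which the limit functional $f\mapsto\int_{\mathbb T}\widetilde F(f)\,dx$ is automatically bounded on the mass-one simplex by concavity of $\widetilde F$, so Varadhan's lemma applies directly without truncating $F$.
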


\begin{exa}
Theorems \ref{DV} and \ref{DVBK} are special cases of Theorem \ref%
{Asymptotic}. In both cases, let $E$ be the diagonal matrix with $i$-th
diagonal entry $1/\alpha_i\in (2,\infty)$. Let $\eta$ be an operator-stable
law with exponent $E$ and Fourier transform $\hat{\eta}=e^{-\Theta}$. Let $%
\mu$ be a measure such that 
\begin{equation}  \label{DVhyp}
n(1-\hat{\mu}(n^{-E}\xi))\rightarrow \Theta (\xi).
\end{equation}
Let $B_n$ be the diagonal matrix with $i$-th diagonal entry $\lfloor
n^{1/\alpha_i} \rfloor$. By Theorem \ref{theo-conv}, (\ref{DVhyp}) implies
that condition (C-$B_n$) is satisfied.

To obtain Theorem \ref{DV}, set $F(s)=\mathbf{1}_{(0,\infty)}(s)$. Define $%
a_n= \lfloor a^{\prime }_n\rfloor$ where $a^{\prime }_n$ is given by $%
a^{\prime }_n \mbox{det}(B_{a^{\prime }_n})=n$, that is, $a^{\prime }_n=
n^{1/(1+\tau)}$ where $\tau=\sum 1/\alpha_i$ is the trace of $E$. It is easy
to see that condition (S-$B_n$-$a_n$) with $\widetilde{F}=\mathbf{1}%
_{(0,\infty)}$.

For Theorem \ref{DVBK}, we simply set $F(s)=\widetilde{F}(s)=s^\gamma$, $%
\gamma\in (0,1)$ and $a^{\prime }_n= n^{(1-\gamma)/(1+\tau(1-\gamma))}$.
Condition (S-$B_n$-$a_n$) follows.
\end{exa}

\begin{exa}
\label{exa31} Assume that $\mu\in \mbox{DNOA}_s(\eta)$, $B_n= \lfloor
n^E\rfloor$, $\tra(E)=\tau$ and $F(y)= y^\gamma \ell( y)$ where $\gamma\in
[0,1]$ and $\ell$ is a slow varying function (at infinity) such that $%
\ell(t^a\ell(t)^b) \sim c(a) \ell(t)$ for any $a>0$ and $b\in \mathbb{R}$.
(e.g., $\ell(t)= (\log t)^\beta $, $\beta\in \mathbb{R}$). Then $\widetilde{F%
}(y)=cy^\gamma$ and $a_n$ is determined by solving 
\begin{equation*}
a_n^{1+\tau(1-\gamma)} \ell( n a_n^{-\tau})\sim n^{1-\gamma},
\end{equation*}
that is 
\begin{equation*}
a_n \sim c \left(\frac{n^{1-\gamma}}{\ell(n)}\right)^{1/(1+\tau(1-\gamma))}.
\end{equation*}
In this case the theorem yields the existence of a constant $k\in (0,\infty)$
such that 
\begin{equation*}
\log \mathbf{E}\left( \exp \left( -\sum_{x\in \mathbb{Z} ^{d}}F(l(n,x))%
\right) \right) \sim -k \left(n^{\gamma +\tau(1-\gamma)}
\ell(n)\right)^{1/(1+\tau(1-\gamma))}.
\end{equation*}
\end{exa}

\begin{exa}The previous examples treat cases where $\mu$ belongs to the 
domain of {\em normal} attraction of $\eta$. 
It is worth pointing out that Theorem \ref{Asymptotic} does not require 
normal attraction. For example, consider the case where
$\mu$ is supported on $\mathbb Z$ and is of the form 
$\mu(k)= \frac{c_\phi}{\phi(|k|)}$ where $\phi: [0,\infty)\ra [1,\infty)$ 
is continuous and  
regularly varying of index $1+\alpha$, $\alpha\in (0,2)$.
By a classical result (see \cite{FelB5}), 
$\mu $ is in the domain of attraction of an $\alpha$-stable law $\eta$ on 
$\mathbb{R}$. The normalizing sequence $b_{n}$ such that $b_n^{-1}(\mu^{(n)})
\Longrightarrow \eta $
can be chosen as the solution
of the equation $b_{n}^{-2}\mathcal{G}(b_{n})=1/n$ where 
$\mathcal G(n)=\sum_0^n k^2\mu(|k|)$, that is $b_{n}\sim \kappa \psi(1/n)$
where $\psi$ is the inverse of $s\mapsto s/\phi(s)$. 
Note that $\psi$ is regularly varying of index $-1/\alpha$.  
Suppose that $F(s)=s^\gamma$, $\gamma\in (0,1)$.  The sequence $a_n$ 
in Theorem \ref{Asymptotic} is then given by equation (S-$B_n$-$a_n$), that is,
$ n^{-1} a_n b_{a_n}(n/b_{a_n})^\gamma =1$, equivalently,
$ a_n \psi(1/a_n)^{1-\gamma}= \kappa^{\gamma-1} n^{1-\gamma}.$ It follows that $a_n$  varies regularly of index $\alpha(1-\gamma)/(1+\alpha-\gamma)$. Of course,
$a_n$ can be computed more explicitly in terms of $\phi$.
\end{exa}

\section{Applications to random walks on groups}

\setcounter{equation}{0}

This section applies the large deviation asymptotics of Theorem \ref%
{Asymptotic} to obtain precise information about the decay of the return
probability of random walks on wreath products with base $\mathbb{Z}^{d}.$
We treat certain classes of random walks with unbounded support on the base
and we allow a large class of lamp groups.

\subsection{Random walks on wreath products}

First we briefly review definition of wreath products and a special type of
random walks on them. Our notation follows \cite{Pittet2002}. Let $H$, $K$
be two finitely generated groups. Denote the identity element of \ $K$ by $%
e_K$ and identity element of $H$ by $e_H$ Let $K_{H}$ denote the direct sum:%
\begin{equation*}
K_{H}=\sum_{h\in H}K_{h}.
\end{equation*}%
The elements of $K_{H}$ are functions $f:H\rightarrow K$, $h\mapsto f(h)=k_h$%
, which have finite support in the sense that $\{h\in H: f(h)=k_h\neq e_K\}$
is finite. Multiplication on $K_H$ is simply coordinate-wise multiplication.
The identity element of $K_H$ is the constant function $\boldsymbol{e}%
_K:h\mapsto e_K$ which, abusing notation, we denote by $e_K$. The group $H$
acts on $K_{H}$ by translation:%
\begin{equation*}
\tau _{h}f(h^{\prime -1}h^{\prime }),\;\;h,h^{\prime }\in H.
\end{equation*}%
The wreath product $K\wr H$ is defined to be semidirect product%
\begin{equation*}
K\wr H=K_{H}\rtimes _{\tau }H,
\end{equation*}%
\begin{equation*}
(f,h)(f^{\prime },h^{\prime })=(f\cdot \tau _{h}f^{\prime },hh^{\prime }).
\end{equation*}%
In the lamplighter interpretation of wreath products, $H$ corresponds to the
base on which the lamplighter lives and $K$ corresponds to the lamp. We
embed $K$ and $H$ naturally in $K\wr H$ via the injective homomorphisms%
\begin{eqnarray*}
k &\longmapsto &\underline{k}=(\boldsymbol{k}_{e_H},e_H), \;\;\boldsymbol{k}%
_{e_H}(e_H)= k,\; \boldsymbol{k}_{e_H}(h)=e_K \mbox{ if } h\neq e_H \\
h &\longmapsto &\underline{h}=(\boldsymbol{e}_K,h).
\end{eqnarray*}%
Let $\mu $ and $\nu $ be probability measures on $H$ and $K$ respectively.
Through the embedding, $\mu $ and $\nu $ can be viewed as probability
measures on $K\wr H.$ Consider the measure 
\begin{equation*}
q=\nu \ast \mu \ast \nu
\end{equation*}%
on $K\wr H$. This is the switch-walk-switch measure on $K\wr H$ with
switch-measure $\nu$ and walk-measure $\mu$.

Let $(X_{i})$ be the random walk on $H$ driven by $\mu ,$ and let $l(n,h)$
denote the number of visits to $h$ in the first $n$ steps:%
\begin{equation*}
l(n,h)=\#\{i:0\leq i\leq n,\text{ }X_{i}=h\}.
\end{equation*}%
Set also 
\begin{equation*}
l^{g}_*(n,h) =\left\{%
\begin{array}{ll}
l(n,h) & \mbox{ if } h\not\in \{ e_H,g\} \\ 
l(n,e_H)-1/2 & \mbox{ if } h=g \\ 
l(n,e_H)-1 & \mbox{ if } h=e_H.%
\end{array}
\right.
\end{equation*}

From \cite{Pittet2002}, probability that the random walk on $K\wr H$ driven
by $q$ is at $(h,g)\in K\wr H$ at time $n$ is given by 
\begin{equation*}
q^{(n)}((f,g))=\mathbf{E}\left( \prod\limits_{h\in H}\nu
^{(2l^g_*(n,h))}(f(h))\boldsymbol{1}_{\{X_{n}=g\} }\right)
\end{equation*}%
Note that $\mathbf{E}$ stands for expectation with respect to the random
walk $(X_i)_0^\infty$ on $H$ started at $e_H$.

From now on we assume that $\nu$ satisfies $\nu(e_K)=\epsilon >0$ so that 
\begin{equation*}
\epsilon \nu^{(n-1)}(e_K) \le \nu^{(n)}(e_K)\le \epsilon^{-1}
\nu^{(n-1))}(e_K).
\end{equation*}
Write $f\overset{C}{\asymp} g$ if $C^{-1}f\le g\le Cf$. Under these
circumstances, we have 
\begin{equation*}
q^{(n)}((\boldsymbol{e}_K,g)) \overset{1/\epsilon^{3}}{\asymp} \mathbf{E}%
\left( \prod\limits_{h\in H}\nu ^{(2l(n,h))}(e_K)\boldsymbol{1}_{\{X_{n}=g\}
}\right)
\end{equation*}
so that we can essentially ignore the difference between $l$ and $l_*$.

Set%
\begin{equation*}
F_{K}(n):=-\log \nu ^{(2n)}(e_K)
\end{equation*}%
so that, for any $g\in H$, 
\begin{equation}  \label{trick}
q^{(n)}((\boldsymbol{e}_K,g))\simeq \mathbf{E}\left( e^{
-\sum_{H}F_{K}(l(n,h))}\boldsymbol{1}_{\{X_{n}=g\}}\right).
\end{equation}

\begin{defin}[weak scaling assumption]
We say that $\nu $ satisfies the upper weak scaling assumption (US-$B_{n}$-$%
a_{n}$) if there exist a constant $c_0>0$ and a function $F:[0,\infty
)\rightarrow \lbrack 0,\infty )$ satisfying (S-$B_{n}$-$a_{n}$) and such
that 
\begin{equation}
\forall\, n\in \mathbb{N},\;\;c_{0}F(n)\leq F_{K}(n). 
\tag{US-$B_{n}$-$a_{n})$}
\end{equation}
We say that $\nu $ satisfies the lower weak scaling assumption \emph{(LS-$%
B_{n}$-$a_{n}$)} if there exist a constant $C_0<\infty $ and a function $%
F:[0,\infty )\rightarrow \lbrack 0,\infty )$ satisfying \emph{(-$B_{n}$-$%
a_{n}$ )} and such that 
\begin{equation}
\forall\, n\in \mathbb{N},\;\;F_K(n)\leq C_0F(n)  \tag{LS-$B_{n}$-$a_{n})$}
\end{equation}
If $F_{K}$ satisfies both the upper and lower conditions,%
\begin{equation}
\forall\,n\in \mathbb{N},\;\;c_{0}F(n)\leq F_{K}(n)\leq C_{0}F(n) 
\tag{WS-$B_{n}$-$a_{n}$}
\end{equation}%
then we say it satisfies the weak scaling assumption (WS-$B_{n}$-$a_{n}$).
\end{defin}

We can now use the large deviation asymptotics to estimate the 
return probability on wreath product $K\wr 
\mathbb{Z}
^{d}.$

\begin{theo}
\label{ReturnBounds}Let $\mu $ be a symmetric probability measure on $%
\mathbb{Z} ^{d}$ which satisfies the convergence assumption \emph{(C-$B_{n}$)%
}. Let $\nu $ be a symmetric probability measure on $K$ with $\nu(e_K)>0$.

\begin{itemize}
\item Assume that $\nu$ satisfies \emph{(US-$B_{n}$-$a_{n}$)}. Then the
switch-walk-switch measure $q=\nu \ast \mu \ast \nu $ on $K\wr \mathbb{Z}^d$
satisfies 
\begin{equation*}
\lim \sup_{n\rightarrow \infty }\frac{a_{n}}{n}\log q^{(n)}(e)\leq -k\left(
\eta ,c_{0}\widetilde{F}\right) .
\end{equation*}

\item Assume instead that $\nu $ satisfies \emph{(LS-$B_{n}$-$a_{n}$)}. Then
we have 
\begin{equation*}
\lim \inf_{n\rightarrow \infty }\frac{a_{2n}}{2n}\log q^{(2n)}(e)\geq
-k\left( \eta ,C_{0}\widetilde{F}\right) .
\end{equation*}
\end{itemize}
\end{theo}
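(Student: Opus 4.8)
\textbf{Proof proposal for Theorem \ref{ReturnBounds}.}

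The plan is to transfer the problem on $K \wr \mathbb{Z}^d$ back to the base $\mathbb{Z}^d$ via the identity (\ref{trick}), and then invoke Theorem \ref{Asymptotic}. Recall that (\ref{trick}) gives $q^{(n)}((\boldsymbol{e}_K, g)) \simeq \mathbf{E}\left( e^{-\sum_{h} F_K(l(n,h))} \boldsymbol{1}_{\{X_n = g\}} \right)$ with implied constants depending only on $\epsilon = \nu(e_K)$. For the return probability at the identity of $K \wr \mathbb{Z}^d$ we take $g = e_H = 0$, so
\begin{equation*}
q^{(n)}(e) \simeq \mathbf{E}\left( e^{-\sum_{h} F_K(l(n,h))} \boldsymbol{1}_{\{X_n = 0\}} \right).
\end{equation*}
Since the $\simeq$ constants are fixed, they contribute $O(1)$ to $\log q^{(n)}(e)$ and hence vanish after multiplication by $a_n/n \to 0$; they can be ignored throughout.

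For the upper bound, assume (US-$B_n$-$a_n$): $c_0 F(n) \le F_K(n)$ for a function $F$ satisfying (S-$B_n$-$a_n$). Dropping the indicator $\boldsymbol{1}_{\{X_n=0\}} \le 1$ and using monotonicity of $t \mapsto e^{-t}$ together with the pointwise bound, we get
\begin{equation*}
q^{(n)}(e) \simeq \mathbf{E}\left( e^{-\sum_h F_K(l(n,h))} \boldsymbol{1}_{\{X_n=0\}} \right) \le \mathbf{E}\left( e^{-\sum_h F_K(l(n,h))} \right) \le \mathbf{E}\left( e^{-c_0 \sum_h F(l(n,h))} \right).
\end{equation*}
Now $c_0 F$ still satisfies (S-$B_n$-$a_n$) with limiting function $c_0 \widetilde{F}$ (the scaling limit is linear in $F$), so Theorem \ref{Asymptotic}, applied with the function $c_0 F$ in place of $F$, yields $\frac{a_n}{n} \log \mathbf{E}(e^{-c_0\sum_h F(l(n,x))}) \to -k(\eta, c_0\widetilde{F})$. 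Taking $\limsup$ in the displayed chain of inequalities and combining with the vanishing of the $O(1)$ correction gives $\limsup_{n} \frac{a_n}{n} \log q^{(n)}(e) \le -k(\eta, c_0 \widetilde{F})$.

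For the lower bound, assume (LS-$B_n$-$a_n$): $F_K(n) \le C_0 F(n)$. The difficulty here, and the main obstacle in the argument, is the presence of the indicator $\boldsymbol{1}_{\{X_n = 0\}}$, which cannot simply be dropped from below — one must retain enough probability mass on the event that the walk returns near the origin. This is exactly what the refined statement (\ref{asy2}) of Theorem \ref{Asymptotic} is designed for: for any small $\epsilon > 0$ there is $R > 1$ with
\begin{equation*}
\liminf_{n\to\infty} \frac{a_n}{n} \log \mathbf{E}\left( e^{-\sum_x F(l(n,x))} \boldsymbol{1}_{B(R)}(B_{a_n}^{-1}(X_n)) \right) \ge -(1+\epsilon) k(\eta, \widetilde{F}).
\end{equation*}
The plan is to run the walk for time $2n$, split at the midpoint $n$, use the Markov property, and on the first half of length $n$ apply (\ref{asy2}) with $F$ replaced by $C_0 F$ (using $F_K(l) \le C_0 F(l)$ to bound $e^{-\sum F_K} \ge e^{-C_0\sum F}$ from below), restricting to the event that $X_n$ lies in the scaled ball $B_{a_n} B(R)$, a set of at most $O(|\det B_{a_n}|)$ lattice points; on the second half one uses the local limit theorem (Theorem \ref{LocalLimit}) to show that from any point in $B_{a_n} B(R)$ the walk returns to $0$ with probability $\gtrsim |\det B_n|^{-1}$, and that along such a bridge the additional occupation-time cost $\sum_h F_K$ accumulated in the second half is negligible on the exponential scale (because the extra time is only $n$ and $F$ grows sublinearly, so its contribution after multiplication by $a_n/n$ tends to $0$, using Proposition \ref{Homogenous} and the regular variation of $a_n$). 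Combining, $\frac{a_{2n}}{2n} \log q^{(2n)}(e) \ge -(1+\epsilon) k(\eta, C_0 \widetilde{F}) + o(1)$, and letting $\epsilon \to 0$ (using continuity of $k$ in its scaling argument, or re-absorbing the $(1+\epsilon)$ factor into $C_0$ and taking a limit) gives the claimed bound $\liminf_n \frac{a_{2n}}{2n} \log q^{(2n)}(e) \ge -k(\eta, C_0 \widetilde{F})$. The bookkeeping needed to control the second-half contribution and the factor $|\det B_n|^{-1}$ — which is subexponential relative to the scale $n/a_n$, since $\log |\det B_n| = O(\log n) = o(n/a_n)$ by Proposition \ref{Homogenous} — is the only genuinely technical point.
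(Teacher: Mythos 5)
Your upper bound is correct and matches the paper's: drop the indicator, use $F_K \ge c_0 F$ pointwise, apply (\ref{asy1}) to $c_0 F$ (which still satisfies (S-$B_n$-$a_n$) with limit $c_0\widetilde{F}$), and note that the $\simeq$ constants contribute $O(1)$ to the log, hence vanish at scale $a_n/n$.

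Your lower bound, however, has a genuine gap. You claim that the occupation-time cost $\sum_h F_K(l_2(n,h))$ accumulated in the second half of the $2n$-step walk becomes negligible after multiplication by $a_n/n$. This is not so: if the second half is unconstrained except at its endpoint, a typical path spreads over $\gg \det B_{a_n}$ sites and the cost is of order $n$, making the bridge contribution far smaller than $e^{-cn/a_n}$; if instead the second half is forced to stay confined in a ball of the right size, the cost is of the same order $n/a_n$ as the first half, not smaller. In neither case does $\frac{a_n}{n}\sum_h F_K(l_2(n,h))$ tend to $0$. There is a second, independent scaling problem: even if the second half cost were zero, the first half alone yields $\liminf \frac{a_n}{n}\log(\cdots) \ge -(1+\epsilon)k(\eta,C_0\widetilde F)$ at scale $a_n/n$, whereas the target is scale $a_{2n}/2n$; by regular variation $\frac{a_{2n}}{2n}\big/\frac{a_n}{n}\to 2^{\kappa-1}\neq 1$, so the constants do not match.

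The paper sidesteps both issues by using the symmetry of $q$ rather than any midpoint split. Since $q$ is symmetric, $q^{(2n)}(e)\ge q^{(2n)}(g)$ for every $g\in K\wr\mathbb{Z}^d$ (Cauchy--Schwarz applied to $q^{(2n)}(g)=\langle q^{(n)}, q^{(n)}(\cdot^{-1}g)\rangle$). Summing this over $g=(\boldsymbol{e}_K,h)$ with $h$ ranging over a lattice ball $B(r)$ gives
\begin{equation*}
\#B(r)\,q^{(2n)}(e)\ \ge\ \sum_{h\in B(r)} q^{(2n)}(\boldsymbol{e}_K,h)\ \simeq\ \mathbf{E}\left(e^{-\sum_{H} F_K(l(2n,h'))}\mathbf{1}_{B(r)}(X_{2n})\right).
\end{equation*}
Choosing $r$ large enough that $B(r)\supset B_{a_{2n}}(B(R))$ and applying (\ref{asy2}) directly at time $2n$ with $C_0F$ in place of $F$, then using that $\log\#B(r)=O(\log a_{2n})=o(2n/a_{2n})$ by Proposition \ref{Homogenous}, gives the lower bound at the correct scale $\frac{a_{2n}}{2n}$ in one stroke, with no local limit theorem, no bridging, and no need to absorb a spurious $(1+\epsilon)$ or $2^{\kappa-1}$ factor.
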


\begin{rem}
Roughly speaking, this theorem says the following: Assume we know how to
normalize $\mu ^{(n)}$ on the base $\mathbb{Z}^d$ via a transformation $%
B_{n} $ so that it converges to a limiting distribution $\eta $. Assume we
know the behavior of the probability of return of the random walk on $K$
driven by $\nu $ in the sense that $\log (\nu^{(2n)}(e_K)) \simeq -F(n)$.
Then $q^{(n)}(e)\simeq \exp (-\frac{n}{a_{n}}),$ where $a_{n}$ can be
computed from the scaling relation%
\begin{equation*}
\frac{a_{n}\det (B_{a_{n}})}{n}F\left( \frac{n}{\det (B_{a_{n}})}\right)
\simeq 1.
\end{equation*}
\end{rem}

\begin{proof} The first statement follows immediately from (\ref{asy1}) 
in Theorem \ref{Asymptotic}. The second statement is deduced 
from (\ref{asy2}) as follows. Since $q$ is symmetric, we have  $q^{(2n)}(e_H)
\ge q^{(2n)}(g)$ for any $g\in K\wr \mathbb Z^d$.  In particular, if 
$B(r)=B_{\mathbb Z^d}(r)$
is the ball of radius $r$ in the lattice $\mathbb Z^d$ then, by (\ref{trick}),
\begin{eqnarray*}
\#B(r)q^{(2n)}(e) &\ge & c\sum_{h\in B(r)} q^{(2n)}(\boldsymbol e_K,h)\\
&\simeq & \mathbf E\left(e^{-\sum_H F_K(l(n,h))}\mathbf 1_{B(r)}(X_{2n})\right).
\end{eqnarray*}
Picking $r= Ra_{2n}$ and using the fact that $a_n$ has regular variation 
of order $\kappa>0$ (see Proposition \ref{Homogenous}), one easily deduces 
from (\ref{asy2}) that
$$\lim \inf_{n\rightarrow \infty }\frac{a_{2n}}{2n}\log q^{(2n)}(e)\geq -k\left(
\eta ,C_{0}\widetilde{F}\right) .
$$
as desired.
\end{proof}

\begin{exa}[$K\wr \mathbb{Z}^d$]
\label{LampsExponential} (See Example \ref{exa31}) Let $\mu$ be a symmetric
probability measure on $\mathbb{Z}^d$, $\mu \in \mbox{DNOA}_s(\eta)$ , $B_n=
\lfloor n^E\rfloor$, $\tra(E)=\tau$ (this implies $\tau\ge d/2$). Let $\nu$
be a symmetric probability measure on $K$ with $\nu(e_K)>0$ and such that 
\begin{equation*}
\log \nu^{(2n)}(e_K) \simeq - F(n).
\end{equation*}
Assume that $F$ is of the form $F(y)= y^\gamma \ell( y)$ where $\gamma\in
[0,1]$ and $\ell$ is a slow varying function (at infinity) such that $%
\ell(t^a\ell(t)^b) \sim c(a) \ell(t)$ for any $a>0$ and $b\in \mathbb{R}$.
(e.g., $\ell(t)= (\log t)^\beta $, $\beta\in \mathbb{R}$). Let $q$ be the
switch-walk-switch measure on $K\wr \mathbb{Z}^d$ associated with $\mu$ and $%
\nu$. Then 
\begin{equation*}
\log q^{(2n)}(e) \simeq -
\left(n^{\gamma+\tau(1-\gamma)}\ell(n)\right)^{1/(1+\tau(1-\gamma))}.
\end{equation*}
For a concrete example on $(\mathbb{Z}\wr \mathbb{Z})\wr \mathbb{Z}^d$, let $%
\mu$ be the uniform probability on 
\begin{equation*}
\{0,\pm s_1,\dots,\pm s_d\} \subset \mathbb{Z}^d
\end{equation*}
where $s_1,\dots,s_d$ are the unit vectors generating the square lattice $%
\mathbb{Z}^d$. Obviously, $\mu$ is in domain of normal attraction of the
Gaussian measure and $\tau= d/2$. On $K=\mathbb{Z}\wr \mathbb{Z}$, let $\nu$
be the switch-walk-switch measure on $\mathbb{Z}\wr \mathbb{Z}$ where both
the switch-measure and walk-measure are simple random walk on $\mathbb{Z}$
with holding. In this case, $F(y)= y^{1/3}(\log y)^{2/3}$ and $\gamma=1/3$
(see, e.g., \cite{Pittet2002}). Hence the measure $q=\nu*\mu*\nu$ on $K\wr 
\mathbb{Z}^d$ satisfies 
\begin{equation*}
\log q^{(2n)}(e )\simeq - n^{(1+d)/(3+d)}(\log n)^{2/(3+d)}.
\end{equation*}
We note that this result can also be obtained from Erschler's results \cite%
{Erschler2006}. Finally, keeping $K$ and $\nu$ as above, we replace the
measure $\mu$ on $\mathbb{Z}^d$ by the measure $\mu_\alpha(x)=c(1+\|x%
\|)^{-d-\alpha}$, $\alpha\in (0,2)$, $\|x\|=(\sum_1^d |x_i|^2)^{/2}$. Note
that $\mu_\alpha \in \mbox{DNOA}_s(\eta_\alpha)$ where $\eta_\alpha$ is the
rotationally symmetric $\alpha$-stable law on $\mathbb{R}^d$ and $\tau =
d/\alpha$. If we set $q_\alpha= \nu *\mu_\alpha *\nu$ then we obtain 
\begin{equation*}
\log q_\alpha ^{(2n)}(e) \simeq - n^{(1+2d/\alpha)/(3+2d/\alpha)}(\log
n)^{2/(3+2d/\alpha)}.
\end{equation*}
\end{exa}

The next theorem captures the fact that a better understanding of the return
probability on the lamp-group $K$ leads to a more precise asymptotics for $%
q^{(n)}(e).$

\begin{theo}
\label{ReturnAsymptotic} 
Let $\mu $ be a symmetric probability measure on $%
\mathbb{Z} ^{d}$ which satisfies the convergence assumption \emph{(C-$B_{n}$)%
}. Let $\nu $ be a symmetric probability measure on $K$ with $\nu(e_K)>0$.
Assume that the function $F_{K}(n)=-\log \nu ^{(2n)}(e_K)$ satisfies the
scaling assumption \emph{(S-$B_{n}$-$a_{n}$)}. Then the measure $q=\nu \ast
\mu \ast \nu $ on $K\wr \mathbb{Z}^d$ satisfies 
\begin{equation*}
\lim_{n\rightarrow \infty }\frac{a_{2n}}{2n}\log q^{(2n)}(e)=-k\left( \eta ,%
\widetilde{F}_K\right) .
\end{equation*}
\end{theo}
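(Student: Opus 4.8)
The plan is to deduce Theorem~\ref{ReturnAsymptotic} directly from Theorem~\ref{Asymptotic} applied to the random walk on the base $\mathbb{Z}^d$ driven by $\mu$, with the functional $F = F_K$. Since we assume $F_K$ satisfies the scaling assumption \emph{(S-$B_n$-$a_n$)}, Proposition~\ref{Homogenous} tells us that the limiting function is $\widetilde{F}_K(y) = \widetilde{F}_K(1) y^\gamma$ for some $\gamma \in [0,1]$, and that $a_n$ has regular variation of some order $\kappa > 0$. Theorem~\ref{Asymptotic} then supplies both the two-sided bound \eqref{asy1} and the localized lower bound \eqref{asy2}. The key link between these occupation-time quantities and the return probability $q^{(2n)}(e)$ on $K \wr \mathbb{Z}^d$ is the identity \eqref{trick}, namely $q^{(n)}((\boldsymbol{e}_K, g)) \simeq \mathbf{E}\big(e^{-\sum_h F_K(l(n,h))} \mathbf{1}_{\{X_n = g\}}\big)$, together with the fact that one can ignore the difference between $l$ and $l_*$ because $\nu(e_K) > 0$.

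For the upper bound, I would simply take $g = e_H$ in \eqref{trick}, drop the indicator $\mathbf{1}_{\{X_n = e_H\}} \le 1$, and compare with the unconstrained expectation in \eqref{asy1}: this gives
\begin{equation*}
\limsup_{n\to\infty} \frac{a_n}{n} \log q^{(n)}(e) \le -k(\eta, \widetilde{F}_K).
\end{equation*}
For the lower bound, I would follow exactly the argument already spelled out in the proof of Theorem~\ref{ReturnBounds}: using symmetry of $q$, we have $\#B(r)\, q^{(2n)}(e) \ge c \sum_{h \in B(r)} q^{(2n)}((\boldsymbol{e}_K, h)) \simeq \mathbf{E}\big(e^{-\sum_h F_K(l(2n,h))} \mathbf{1}_{B(r)}(X_{2n})\big)$. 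Choosing $r = R a_{2n}$ with $R$ as in \eqref{asy2}, and using that $\#B(R a_{2n})$ grows only polynomially in $a_{2n}$ (hence, since $\log \#B(Ra_{2n}) = O(\log a_{2n}) = o(a_{2n}/(2n) \cdot 2n) $, is negligible on the scale $n/a_n$ because $a_n$ is regularly varying of positive index $\kappa$), we obtain for every small $\epsilon > 0$
\begin{equation*}
\liminf_{n\to\infty} \frac{a_{2n}}{2n} \log q^{(2n)}(e) \ge -(1+\epsilon) k(\eta, \widetilde{F}_K).
\end{equation*}
Letting $\epsilon \to 0$ finishes the lower bound.

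Combining the two bounds and noting that $\frac{a_n}{n}$ and $\frac{a_{2n}}{2n}$ are comparable (again by regular variation of $a_n$, $a_{2n}/a_n \to 2^\kappa$) yields the claimed limit
\begin{equation*}
\lim_{n\to\infty} \frac{a_{2n}}{2n} \log q^{(2n)}(e) = -k(\eta, \widetilde{F}_K).
\end{equation*}
The only real subtlety — and the step I expect to require the most care — is the matching of scales: the upper bound in \eqref{asy1} is stated along the full sequence $n$ while the return probability is most naturally controlled along even times $2n$, and the lower bound \eqref{asy2} involves the localized event $\{B_{a_n}^{-1}(X_n) \in B(R)\}$ rather than the single point $X_n = e_H$. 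Passing from the localized lower bound to a genuine return-probability lower bound forces the symmetry-plus-ball-counting trick, and one must check that the entropy cost $\log \#B(Ra_{2n})$ of summing over the ball is of smaller order than $n/a_n$; this is precisely where the regular variation of $a_n$ of strictly positive index $\kappa$ (from Proposition~\ref{Homogenous}) is used, and it is worth stating explicitly rather than leaving implicit.
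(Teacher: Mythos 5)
Your proposal is correct and matches the paper's own strategy: Theorem \ref{ReturnAsymptotic} is precisely the $c_0=C_0=1$ case of Theorem \ref{ReturnBounds}, whose proof uses (\ref{asy1}) for the upper bound and the symmetry-plus-ball-counting trick together with (\ref{asy2}) and the regular variation of $a_n$ for the lower bound, exactly as you do. (One small caveat, inherited from the paper's own argument: the event $\{B_{a_{2n}}^{-1}(X_{2n})\in B(R)\}$ means $X_{2n}\in B_{a_{2n}}(B(R))$, which need not fit inside the lattice ball of radius $Ra_{2n}$ when $\|B_{a_{2n}}\|_{\mathrm{op}}\gg a_{2n}$; one should instead sum over $B_{a_{2n}}(B(R))\cap\mathbb{Z}^d$, whose cardinality is $O(|\det B_{a_{2n}}|)$ and hence still has logarithmic entropy, so the conclusion is unaffected.)
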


\begin{exa}
\label{LampPowerDecay} 
Referring to the setting of Theorem \ref%
{ReturnAsymptotic}, assume that $\nu ^{(2n)}(e_K)$ satisfies $\nu
^{(2n)}(e_K)\simeq n^{-\theta }$ so that $F_{K}(n)\sim \theta \log n$.
Assume $\mu $ is in the domain of normal attraction of $\eta $. Let $E\in %
\mbox{EXP}(\eta)$ such that $n^{-E}(\mu^{(n)})\Longrightarrow \eta$. Set $%
B_{n}=\left\lfloor n^{E}\right\rfloor $ (take integer values of all
entries). Let $\tau=\tra(E)$ be the trace of $E$. Solving for $t$ in the
scaling equation%
\begin{equation*}
\frac{t^{1+\tau}}{n}\log \left(\frac{n}{t^{\tau}}\right)=1,
\end{equation*}%
yields 
\begin{equation*}
a_n = \lfloor t \rfloor \sim \left( \frac{n}{\log n}\right) ^{\frac{1}{1+\tau%
}}.
\end{equation*}%
Then $F_{K}$ satisfies the scaling assumption 
\begin{equation*}
\lim_{n\rightarrow \infty }\frac{a_{n}\det (B_{a_{n}})}{n}F_{K}\left( \frac{n%
}{\det (B_{a_{n}})}y\right) =\theta \text{, for }y>0.
\end{equation*}%
Hence Theorem \ref{ReturnAsymptotic} yields 
\begin{equation*}
\lim_{n\rightarrow \infty }\frac{1}{(2n)^{\frac{\tau}{1+\tau}}\left( \log
2n\right) ^{\frac{1}{1+\tau}}}\log q^{(2n)}(e)=-k\left( \eta ,\widetilde{F}%
\right) ,
\end{equation*}%
where the limiting function $\widetilde{F}$ is given by $\widetilde{F}(y)
=\theta \cdot \mathbf{1}_{\{y>0\}}.$
\end{exa}

\subsection{Assorted examples}

In this section we describe a number of explicit applications of Theorems %
\ref{ReturnBounds} and \ref{ReturnAsymptotic}.

\begin{exa}
Let $\mathbb{Z}^d$ be equipped with the canonical generating $d$-tuple $%
S=(s_1,\dots,s_d)$ and fix $a=(\alpha_1,\dots,\alpha_d)\in (0,2)^d$.
Consider the probability measure $\mu_a$ given by 
\begin{equation}  \label{muaZd}
\mu_{a}(x)=\frac{1}{d}\sum_{i=1}^d\sum_{n\in\mathbb{Z}} \frac{c(\alpha_i)}{%
(1+|n|)^{1+\alpha_i}} \mathbf{1}_{\{n\}}(x_i),\;\;x=(x_1,\dots,x_d).
\end{equation}
This measure is quite obviously in the domain of normal operator attraction
of $\eta_a= \otimes_1^d \eta_{\alpha_i}$ where $\eta_{\alpha_i}$ is a
measure on $\mathbb{R}$ which is symmetric and $\alpha_i$-stable. In
particular, the diagonal $d\times d$ matrix $E_a$ with $i$-th diagonal entry 
$1/\alpha_i$ is in $\mbox{EXP}(\eta_a)$. The Dirichlet form $\mathcal{E}%
_{\eta_a}$ associated to the limit law $\eta_a$ is best described via
Fourier transform as $\mathcal{E}_{\eta_a}(f,f) =\sum_1^d c_i\int_{\mathbb{R}%
^d} |\hat{f}(\xi)|^2|\xi_i|^{2\alpha_i} d\xi$, $c_i>0$, $i=1,\dots d$ (the
scale parameters $c_i$ are related but not equal to $c(\alpha_i)$).

\begin{theo}
On $H=\mathbb{Z}^d$, consider the measures $\mu_a$ defined above, $a\in
(0,2)^d$. Define $\alpha\in (0,2)$ by 
\begin{equation*}
\frac{1}{\alpha}=\frac{1}{d}\sum_1^d\frac{1}{\alpha_i}.
\end{equation*}

\begin{enumerate}
\item Let $K$ be a finite group and let $\nu$ be the uniform measure on $K$.
On $K\wr \mathbb{Z}^d$, let $q_a=\nu*\mu_a*\nu$. Then there exists a
constant $k=k(d,a,|K|)$ such that 
\begin{equation*}
\log q_a^{(n)}(e) \sim - k n^{d/(d+\alpha)}.
\end{equation*}

\item Let $K=\mathbb{Z}^D$ and $\nu$ be a symmetric probability measure on $%
\mathbb{Z}^D$ with $\nu(e_K)>0$ which is in the domain of normal attraction
of an adapted strictly operator-stable law $\eta$. On $\mathbb{Z}^D\wr 
\mathbb{Z}^d$, let $q_a=\nu*\mu_a*\nu$. Then there exists a constant $%
k=k(d,a,D,\nu)$ such that 
\begin{equation*}
\log q_a^{(n)}(e) \sim - k n^{d/(d+\alpha)} (\log n)^{\alpha/(d+\alpha)}.
\end{equation*}
\end{enumerate}
\end{theo}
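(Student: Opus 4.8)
The plan is to read off both statements from Theorem~\ref{ReturnAsymptotic}, once the base walk $\mu_a$ and the occupation functional $F_K(n)=-\log\nu^{(2n)}(e_K)$ are put in the required form. First, the base walk: the measure $\mu_a$ of (\ref{muaZd}) is symmetric, has $\mu_a(0)>0$, and lies in $\mbox{DNOA}_s(\eta_a)$ with $\eta_a=\otimes_1^d\eta_{\alpha_i}$ and exponent $E_a=\mathrm{diag}(1/\alpha_1,\dots,1/\alpha_d)$; hence, with $B_n=\lfloor n^{E_a}\rfloor$, the convergence assumption (C-$B_n$) holds, and since $E_a$ is diagonal, $\det B_n=\prod_i\lfloor n^{1/\alpha_i}\rfloor\sim n^{\tau}$ where $\tau:=\tra(E_a)=\sum_1^d 1/\alpha_i=d/\alpha$. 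I also record a structural fact used in both cases: for any symmetric $\nu$ on a group $K$ with $\nu(e_K)>0$ the sequence $n\mapsto\nu^{(2n)}(e_K)=\|\nu^{(n)}\|_2^2$ is positive, non-increasing ($\ell^2$-contraction of convolution), and log-convex (write $\nu^{(2n)}(e_K)=\int_0^1 s^n\,d\sigma(s)$ by the spectral theorem and apply the Cauchy--Schwarz inequality). Thus $F_K$ is automatically non-negative, non-decreasing, concave and subadditive with $F_K(0)=0$, so verifying the scaling assumption (S-$B_n$-$a_n$) for $F_K$ reduces to producing a sequence $a_n$ along which the scaling limit exists and is not identically zero.

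For Statement~1, $\nu$ is the uniform measure on the finite group $K$ (I assume $|K|\ge2$, so $\log|K|>0$); it is symmetric and idempotent ($\nu\ast\nu=\nu$), hence $\nu^{(n)}=\nu$ for $n\ge1$ and $F_K(n)=\log|K|$ for $n\ge1$, $F_K(0)=0$. Taking $a_n=\lfloor n^{1/(1+\tau)}\rfloor$ gives $a_n\det B_{a_n}\sim a_n^{1+\tau}\sim n$, so $\tfrac{n}{\det B_{a_n}}y\to\infty$ uniformly for $y$ in a compact subset of $(0,\infty)$, whence $\tfrac{a_n\det B_{a_n}}{n}F_K\!\big(\tfrac{n}{\det B_{a_n}}y\big)\to\log|K|$; thus (S-$B_n$-$a_n$) holds with $\widetilde F_K(y)=\log|K|\,\mathbf 1_{(0,\infty)}(y)$ (this is the Donsker--Varadhan situation of Theorem~\ref{DV}, as $\sum_h F_K(l(n,h))=\log|K|\cdot D_n$). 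Theorem~\ref{ReturnAsymptotic} now gives $\tfrac{a_{2n}}{2n}\log q_a^{(2n)}(e)\to -k(\eta_a,\widetilde F_K)$ with $k(\eta_a,\widetilde F_K)\in(0,\infty)$ by Theorem~\ref{Asymptotic}; since $a_{2n}\sim(2n)^{1/(1+\tau)}$ and $\tfrac{\tau}{1+\tau}=\tfrac{d}{d+\alpha}$, this reads $\log q_a^{(2n)}(e)\sim -k(\eta_a,\widetilde F_K)\,(2n)^{d/(d+\alpha)}= -k_1 n^{d/(d+\alpha)}$ for a constant $k_1=k_1(d,a,|K|)>0$.

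For Statement~2, $\nu$ on $\mathbb Z^D$ lies in $\mbox{DNOA}_s(\eta')$ for a symmetric adapted strictly operator-stable law $\eta'$ on $\mathbb R^D$ with exponent $E'$; its density $g'$ is smooth and, being adapted, strictly positive, and $\nu(e_K)>0$ provides the aperiodicity needed for the local limit theorem (Theorem~\ref{LocalLimit}), which yields $\nu^{(2n)}(e_K)\sim g'(0)\,|\det B'_{2n}|^{-1}$ with $B'_m=\lfloor m^{E'}\rfloor$ and $|\det B'_m|\sim m^{\tra(E')}$. Hence $F_K(n)\sim\theta\log n$ with $\theta:=\tra(E')\ge D/2$, which is precisely the situation of Example~\ref{LampPowerDecay} with base-trace $\tau=d/\alpha$: one takes $a_n$ to be (the integer part of) the solution of $\tfrac{t^{1+\tau}}{n}\log(n/t^{\tau})=1$, so that $a_n\sim(1+\tau)^{1/(1+\tau)}(n/\log n)^{1/(1+\tau)}=:c\,(n/\log n)^{1/(1+\tau)}$ is regularly varying of index $1/(1+\tau)$ (compatibly with Proposition~\ref{Homogenous}), checks that (S-$B_n$-$a_n$) holds for $F_K$ with $\widetilde F_K(y)=\theta\,\mathbf 1_{(0,\infty)}(y)$, and applies Theorem~\ref{ReturnAsymptotic} to get $\tfrac{a_{2n}}{2n}\log q_a^{(2n)}(e)\to -k(\eta_a,\widetilde F_K)\in(-\infty,0)$. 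Substituting $\tfrac{2n}{a_{2n}}\sim c^{-1}(2n)^{\tau/(1+\tau)}(\log 2n)^{1/(1+\tau)}= c^{-1}(2n)^{d/(d+\alpha)}(\log 2n)^{\alpha/(d+\alpha)}$ then gives $\log q_a^{(2n)}(e)\sim -k_2\,n^{d/(d+\alpha)}(\log n)^{\alpha/(d+\alpha)}$ for a constant $k_2=k_2(d,a,D,\nu)>0$.

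It remains to pass from even to all times. As $q_a$ is symmetric with $q_a(e)\ge\nu(e_K)^2\mu_a(0)>0$, one has $q_a(e)\,q_a^{(2n)}(e)\le q_a^{(2n+1)}(e)\le q_a^{(2n)}(e)$ --- the left bound from nonnegativity and symmetry of $q_a^{(2n)}$, the right from Cauchy--Schwarz together with the monotonicity $q_a^{(2n+2)}(e)\le q_a^{(2n)}(e)$ noted above --- so $\log q_a^{(2n+1)}(e)=\log q_a^{(2n)}(e)+O(1)$, and since the normalizers $n^{d/(d+\alpha)}$ and $n^{d/(d+\alpha)}(\log n)^{\alpha/(d+\alpha)}$ are regularly varying and tend to infinity, the bounded error is negligible and the stated asymptotics hold for every $n$ with the same constant. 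I expect the only genuinely delicate point to be the input for Statement~2 that $\nu^{(2n)}(e_K)\sim c\,n^{-\theta}$ with $c>0$ and $\theta=\tra(E')$: this needs the two-sided local limit theorem rather than a mere upper bound $\nu^{(2n)}(e_K)=O(n^{-\theta})$ --- which would only permit the inequalities of Theorem~\ref{ReturnBounds}, not the exact limit of Theorem~\ref{ReturnAsymptotic} --- together with a check that the lattice/aperiodicity hypotheses of Theorem~\ref{LocalLimit} are met for $\nu$ and that the limiting operator-stable density is positive at the origin.
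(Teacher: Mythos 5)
Your proposal is correct and follows the same route the paper intends: both statements are instances of Theorem~\ref{ReturnAsymptotic} (equivalently of Theorem~\ref{DV} for part~1 and Example~\ref{LampPowerDecay} for part~2), applied to the switch--walk--switch measure with base walk $\mu_a\in\mbox{DNOA}_s(\eta_a)$ and $\tau=\tra(E_a)=d/\alpha$. The paper leaves several of the verifications implicit, and you supply them carefully: the spectral-theoretic argument that $F_K(n)=-\log\nu^{(2n)}(e_K)$ is automatically non-decreasing, concave, sub-additive with $F_K(0)=0$ (so only the existence of a non-trivial scaling limit along $a_n$ needs checking); the explicit identification of $\widetilde F_K$ and of $a_n$ (including the harmless normalizing constant $(1+\tau)^{1/(1+\tau)}$, which the paper drops in Example~\ref{LampPowerDecay}); and the even-to-odd interpolation $q_a(e)\,q_a^{(2n)}(e)\le q_a^{(2n+1)}(e)\le q_a^{(2n)}(e)$, which the paper does not address. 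You also correctly flag the one genuinely non-trivial input for part~2, namely a two-sided local limit theorem for $\nu$ (Theorem~\ref{LocalLimit}) with a strictly positive limiting density at the origin, so that $\nu^{(2n)}(e_K)\asymp n^{-\tra(E')}$ holds as a genuine asymptotic and not merely an upper bound; this is what gives the exact limit of Theorem~\ref{ReturnAsymptotic} rather than just the one-sided estimates of Theorem~\ref{ReturnBounds}.
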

\end{exa}

\begin{exa}
Set $H=\mathbb{Z}^d$, $K=\mathbb{Z}^D$, $G=K\wr H= \mathbb{Z}^D\wr \mathbb{Z}%
^d$. One natural set of generators of $G$ is obtained by joining the
canonical generators of $H=\mathbb{Z}^d$ and $K=\mathbb{Z}^D$ as follows.
Let $(s^H_i)_1^d$ and $(s^K_i)_1^D$ be the canonical generators of $H$ and $%
K $, respectively. Let $S=(s_i)_1^{d+D}$ be the generating tuple of $G$
given by 
\begin{equation*}
s_i=(\boldsymbol{e}_K,s^H_i) \mbox{ for }i\in \{1,\dots,d\} \mbox{ and }
s_{i}=(\boldsymbol{s}^K_i,e_H) \mbox{ for } i\in \{d+1,\dots,d+D\}.
\end{equation*}
Of course, $e_K=0$ in $\mathbb{Z}^D$ and $e_H=0$ in $\mathbb{Z}^d$. Let 
\begin{equation*}
a=(\alpha_1,\dots,\alpha_{d+D})\in (0,2)^{d+D}
\end{equation*}
be a $(d+D)$-tuple. Let $b=b(a)=(\beta_i)_1^d$ and $c=c(a)=(\gamma_i)_1^D$
with $\beta_i=\alpha_i$, $i=1,\dots,d$ and $\gamma_i=\alpha_{d+i}$, $%
i=1,\dots,D$. Let $\mu^H_b,\mu^K_c$ be the probability measures on $H=%
\mathbb{Z}^d,K=\mathbb{Z}^D$, respectively, defined at (\ref{muaZd}). Let $q$
be the switch-walk-switch measure on $G=K\wr H$ given by $%
q=\mu^K_c*\mu^H_b*\mu^K_c$. The theorem stated above applies and yields 
\begin{equation*}
\log q^{(n)}(e)\sim -k(d,D,a) n^{d/(d+\beta)} (\log
n)^{\beta/(d+\beta)},\;\; \frac{1}{\beta}=\frac{1}{d}\sum_1^d\frac{1}{\beta_i%
}.
\end{equation*}

For $S$ and $a$ as defined above, let $\mu_{S,a}$ the the probability
measure on $G=K\wr H$ defined by
\begin{equation}  \label{muSa}
\mu_{S,a}(g)=\frac{1}{k}\sum_i \sum _{n\in \mathbb{Z}}\mathbf{1}%
_{\{s_i^n\}}(g) \mu_i(n), \;\;\;\mu_i(n)=c_i(1+|n|)^{-1-\alpha_i}.
\end{equation}
In words, this walk takes steps along the (discrete) one parameter groups $%
\langle s_i \rangle =\{s_i^{n}, n\in \mathbb{Z}\}\subset G$ and the steps
along $\langle s_i\rangle$ are distributed according to a symmetric
stable-like power law with exponent $\alpha_i$. These measures $\mu_{S,a}$
are very natural from an algebraic point of view and one expects that the
properties of the associated random walks depend in interesting way on the
structure of the group $G$, the generating $k$-tuple $S$ and the choice of
the $k$-dimensional parameter $a$.

The Dirichlet forms $\mathcal{%
E}_{\mu_{S,a}}$ and $\mathcal{E}_q$ associated with the measures $\mu_{S,a}$
and $q$ on $G$ satisfy 
\begin{equation*}
\mathcal{E}_{\mu_{S,a}} \simeq \mathcal{E}_q.
\end{equation*}
Hence it follows from \cite{Pittet2000} that 
\begin{equation*}
\log \mu_{S,a}^{(n)}(e) \simeq - n^{d/(d+\beta)} (\log n)^{\beta/(d+\beta)}
\end{equation*}
where $\beta$ is as above. Note that $\beta$ depends only on the first $d$
coordinates of the parameter $a=(\alpha_i)_1^{d+D}$. In this sense, the
random walks associated with the collection of the measures $\mu_{S,a}$ when 
$a$ varies can distinguish among the $d+D$ generators $s_i$, $1\le i\le d+D$
of $K\wr H$ between those which come from $H$ and those which come from $K$.
\end{exa}

\begin{exa}
\label{Iterative} Consider the iterated wreath product 
\begin{equation*}
(\dots ( \mathbb{Z}_{2}\wr \mathbb{Z} ^{d_{1}})\wr \mathbb{Z} ^{d_{2}})\wr
\dots)\wr \mathbb{Z} ^{d_{k}}.
\end{equation*}%
Note that $\wr $ is not associative so that this iterated wreath product is
different from the iterated wreath product $%
\mathbb{Z}
_{2}\wr (\dots\wr (%
\mathbb{Z}
\wr 
\mathbb{Z}
)\dots)$ considered in \cite{Erschler2006}. Here we are iterating the lamps
while in \cite{Erschler2006} the base is iterated. Set 
\begin{equation*}
\gamma_i=\sum_1^i\frac{d_{j}}{\alpha_{j}},\;i=1,\dots,k.
\end{equation*}

For each $i=1,\dots,k$, fix $\alpha_i\in (0,2)$ and a probability measure $%
\mu_i$ on $\mathbb{Z}^{d_i}$ which is symmetric, satisfies $\mu_i(0)>0$ and
is in the domain of normal attraction of the rotationally $\alpha_i$-stable
law $\eta_i$ on $\mathbb{R}^{d_i}$. Let $q_0$ be the uniform measure on $%
\mathbb{Z}_2=\{0,1\}$. Iteratively, define the switch-walk-switch
probability measure 
\begin{equation*}
q_i= q_{i-1}*\mu_i*q_{i-1}
\end{equation*}
on $(\dots( \mathbb{Z}_{2}\wr \mathbb{Z} ^{d_{1}})\wr \mathbb{Z}
^{d_{2}})\wr \dots)\wr \mathbb{Z} ^{d_{i}}$.

Applying Corollary \ref{ReturnAsymptotic} iteratively, we obtain 
\begin{equation*}
\lim_{n\rightarrow \infty }n^{-\frac{\gamma _{k}}{1+\gamma _{k}}} \log
q_k^{(n)}(e)=-c_{k}
\end{equation*}%
where the constant $c_{k}$ can be obtained as follows. The constant $c_{1}$
is given by \cite{Donsker1979} whereas, for $2\leq i\leq k$ and referring to
(\ref{asy1})-(\ref{asy2}), $c_{i}=k\left( \upsilon _{i},\widetilde{F}%
_{i}\right) $ where 
\begin{equation*}
\widetilde{F}_{i}(y)=c_{i-1}y^{\frac{\gamma _{i-1}}{1+\gamma _{i-1}}}.
\end{equation*}
Similarly, we can consider the iterated wreath product 
\begin{equation*}
(\dots( \mathbb{Z} ^{d_{0}}\wr \mathbb{Z} ^{d_{1}})\wr \mathbb{Z}
^{d_{2}})\wr ...)\wr \mathbb{Z} ^{d_{k}},
\end{equation*}
starting with lamp group $\mathbb{Z}^{d_{0}}$ instead of $\mathbb{Z}_2$ and $%
q_0=\mu_0$ in the domain of normal attraction of the rotationally symmetric $%
\alpha _{0}$-stable distribution on $\mathbb{R }^{d_{0}}.$ In this case, we
obtain 
\begin{equation*}
\lim_{n\rightarrow \infty } [n^{\gamma_k/(1+\gamma_k)} (\log
n)^{1/(1+\gamma_k)}]^{-1} \log q^{(n)}(e)=-c_{k}.\;\;
\end{equation*}%
The constant $c_{k}$ can be obtained iteratively with $c_{1}=k\left( \eta ,%
\frac{d_{0}}{\alpha _{0}}\mathbf{1}_{\{y>0\}}\right)$ and $c_{i}=k\left(
\upsilon _{i},\widetilde{F}_{i}\right) , $ with $\widetilde{F}_{i}$ as above
for $1<i\le k$.
\end{exa}

\subsection{Application to fastest decay under moment conditions}

This section describes applications of Theorem \ref{ReturnBounds} to the
computation of the group invariants $\widetilde{\Phi}_{G,\rho}$ introduced
in \cite{Bendikov}. Recall that \cite{Pittet2000} introduce a group
invariant $\Phi_G$ which is a decreasing function of $n$ (defined up to the
equivalence relation $\simeq$) such that 
\begin{equation*}
\phi^{(2n)}(e)\simeq \Phi_G(n)
\end{equation*}
for all finitely supported symmetric probability measure $\phi$ with
generating support.

Let $\rho $ be a function 
\begin{equation*}
\rho :G\rightarrow \lbrack 1,\infty ).
\end{equation*}%
The weak $\rho $-moment of the probability measure $\mu $ is defined as 
\begin{equation*}
W(\rho ,\mu ):=\sup_{s>0}s\mu (x:\rho (x)>s).
\end{equation*}

\begin{defin}[Definition 2.1 \protect\cite{Bendikov}: Fastest decay under
weak $\protect\rho $-moment]
Let $G$ be a locally compact unimodular group. Fix a compact symmetric
neighborhood $\Omega $ of $e$. Let $\widetilde{\mathcal{S}}_{G,\rho
}^{\Omega ,K}$ be the set of all symmetric continuous probability densities $%
\phi $ on $G$ with the properties that $\left\Vert \phi \right\Vert _{\infty
}\leq K$ and $W(\rho ,\phi d\lambda )\leq K\sup_{\Omega ^{2}}\{\rho \}.$ Set%
\begin{equation*}
\widetilde{\Phi }_{G,\rho }^{\Omega ,K}(n):=\inf \{\phi ^{(2n)}(e):\phi \in 
\widetilde{\mathcal{S}}_{G,\rho }^{\Omega ,K}\}.
\end{equation*}
\end{defin}

Here we will only consider the case when $G$ is finitely generated and $\rho$
is one of the power function $\rho _{\alpha }(x)=(1+|x|)^\alpha$, $\alpha\in
(0,2)$ where $\left\vert \cdot \right\vert $ is the word distance on a fixed
Cayley graph of $G.$ We are concerned with the decay of $\widetilde{\Phi }%
_{G,\rho_\alpha }^{\Omega ,K}$ when $n$ is large. By Proposition 1.2 \cite%
{Bendikov}, we can drop the reference to $\Omega $ and $K$. Lower bounds on $%
\widetilde{\Phi }_{G,\rho }$ follow from general comparison and
subordination results, see \cite{Bendikov}. Here, we are interested in
obtaining upper bounds on $\widetilde{\Phi}_{G,\rho}$.

By definition, for any probability measure $\phi $ on $G$ which satisfies
the weak $\rho $-moment condition, $n\mapsto \phi ^{(2n)}(e)$ provides an
upper bound for $\widetilde{\Phi }_{G,\rho }$. When $G$ is a wreath product $%
G=K\wr 
\mathbb{Z}
^{d},$ we can use measures of the form $\phi =\nu \ast \mu \ast \nu $ and
apply Theorem \ref{ReturnBounds} to estimate $\phi ^{(2n)}(e).$ Also because
of the natural embedding of $K$ and $%
\mathbb{Z}
^{d}$ in the wreath product $K\wr 
\mathbb{Z}
^{d},$ it's not hard to estimate the needed weak $\rho$-moment of $\phi$. We
shall see that, in certain cases, the measures $\phi $ of this type actually
achieve the fastest decay rate given by $\widetilde{\Phi}_{G,\rho}$, up to
the equivalence relation $\simeq$. This technique was already used in \cite[%
Theorem 5.1]{Bendikov} to determine $\widetilde{\Phi }_{%
\mathbb{Z}
_{2}\wr 
\mathbb{Z}
^{d},\rho _{\alpha }}$. In this case, the classical result of Donsker and
Varadhan \cite{Donsker1979} is all one needs. In the examples below, we use
Theorem \ref{ReturnBounds} to obtain precise upper bounds on $\Phi_{K\wr 
\mathbb{Z}^d}$ in some other cases.

\begin{exa}
In this example we consider $G=K\wr \mathbb{Z}^d$ when $K$ is either finite
or has polynomial growth or has exponential volume growth and $%
\Phi_K(n)\simeq \exp(-n^{1/3})$. The first case is already treated in \cite%
{Bendikov}. We note that these three cases exhaust all possibilities when $K$
is a polycyclic group. The third case also covers the situations when $K$ is
the Baumslag-Solitar group or the lamplighter group $\mathbb{Z}_2\wr \mathbb{%
Z}$.

\begin{theo}
Fix $\alpha\in (0,2)$. Let $G$ be the group $K\wr \mathbb{Z}^d$.

\begin{enumerate}
\item Assume that $K$ is finite. Then 
\begin{equation*}
\log \widetilde{\Phi}_{G,\rho_\alpha}(n) \simeq -n^{d/(d+\alpha)}.
\end{equation*}

\item Assume that $K$ has polynomial volume growth. Then 
\begin{equation*}
\log \widetilde{\Phi}_{G,\rho_\alpha}(n) \simeq -n^{d/(d+\alpha)} (\log
n)^{\alpha/(d+\alpha)}.
\end{equation*}

\item Assume that $K$ has exponential growth and satisfies $\Phi_K(n)\simeq
\exp(-n^{1/3})$. Then 
\begin{equation*}
\log \widetilde{\Phi}_{G,\rho_\alpha}(n) \simeq -n^{(d+1)/(d+1+\alpha)}.
\end{equation*}
\end{enumerate}
\end{theo}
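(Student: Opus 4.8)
The lower bounds in all three statements are those of \cite{Bendikov}, so the whole task is to construct, in each case, a symmetric probability measure $\phi$ on $G=K\wr\mathbb{Z}^d$ lying in the admissible class $\widetilde{\mathcal S}_{G,\rho_\alpha}$ (so that $\widetilde{\Phi}_{G,\rho_\alpha}(n)\le\phi^{(2n)}(e)$) whose return probability decays at least as fast as the stated rate. The plan is to take $\phi=\nu*\mu*\nu$, a switch--walk--switch measure, with $\mu$ the stable-like measure $\mu(x)=c_{d,\alpha}(1+\|x\|)^{-d-\alpha}$ on $\mathbb{Z}^d$ in every case and $\nu$ on $K$ chosen according to the case. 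I would first record two facts. (i) $\mu$ is symmetric, belongs to $\mbox{DNOA}_s(\eta_\alpha)$ where $\eta_\alpha$ is the rotationally symmetric $\alpha$-stable law on $\mathbb{R}^d$, with exponent $E=\alpha^{-1}I$ and \emph{integer} normalization $B_n=\lfloor n^{1/\alpha}\rfloor I$; hence $\mu$ satisfies (C-$B_n$) with $\tau:=\tra(E)=d/\alpha$ and $\det B_n\simeq n^{d/\alpha}$, and the tail $\mu(\|x\|>r)\simeq r^{-\alpha}$ gives $W(\rho_\alpha,\mu)<\infty$. (ii) Inside $G$ one has $|(\boldsymbol{e}_K,x)|_G=|x|_{\mathbb{Z}^d}$ and $|(\underline{k},e_H)|_G=|k|_K$, so a single $\phi$-step $(f,x)$ satisfies $|(f,x)|_G\le|x|_{\mathbb{Z}^d}+|k_1|_K+|k_2|_K$ with $k_1,k_2$ the two switched lamp values; consequently $W(\rho_\alpha,\phi)\lesssim W(\rho_\alpha,\mu)+W(\rho_\alpha,\nu)$, and $\phi$ lies in $\widetilde{\mathcal S}_{G,\rho_\alpha}$ as soon as $W(\rho_\alpha,\nu)<\infty$ (symmetry and generating support being arranged by the choice of $\nu$). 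With this done, the decay of $\phi^{(2n)}(e)=q^{(2n)}(e)$ is read off from Theorem~\ref{ReturnAsymptotic} (when $F_K(n)=-\log\nu^{(2n)}(e_K)$ itself satisfies the scaling assumption) or from the first bullet of Theorem~\ref{ReturnBounds} (when $F_K$ only satisfies the upper weak scaling assumption), and since the resulting sequence $a_n$ has regular variation of some order $\kappa\in(0,1)$ (Proposition~\ref{Homogenous}), $\log\phi^{(2n)}(e)$ is comparable to $-n/a_n$.

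\emph{Case 1 ($K$ finite).} Take $\nu$ uniform on $K$. Then $W(\rho_\alpha,\nu)<\infty$ trivially, $\nu(e_K)>0$, and $F_K(n)\to\log|K|>0$ while $F_K$ is concave, subadditive, increasing with $F_K(0)=0$; choosing $a_n$ with $a_n\det B_{a_n}\simeq n$, i.e.\ $a_n\simeq n^{1/(1+\tau)}=n^{\alpha/(d+\alpha)}$, the scaling assumption (S-$B_n$-$a_n$) holds with $\widetilde F_K$ a positive multiple of $\mathbf{1}_{(0,\infty)}$. Theorem~\ref{ReturnAsymptotic} then gives $\log\phi^{(2n)}(e)\sim -k(\eta_\alpha,\widetilde F_K)\,2n/a_{2n}$, which is $\simeq -n^{d/(d+\alpha)}$ (this is essentially the argument of \cite{Bendikov} using \cite{Donsker1979}). \emph{Case 2 ($K$ of polynomial growth of degree $D$).} Take $\nu$ finitely supported, symmetric, with $\nu(e_K)>0$ and generating support, so that $\nu^{(2n)}(e_K)\simeq n^{-D/2}$, i.e.\ $F_K(n)\sim(D/2)\log n$. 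This is precisely the situation of Example~\ref{LampPowerDecay} with $\theta=D/2$: solving $a_n^{1+\tau}\log(n/a_n^{\tau})\simeq n$ gives $a_n\simeq(n/\log n)^{1/(1+\tau)}=(n/\log n)^{\alpha/(d+\alpha)}$, and Theorem~\ref{ReturnAsymptotic} yields $\log\phi^{(2n)}(e)\simeq -n/a_n\simeq -n^{d/(d+\alpha)}(\log n)^{\alpha/(d+\alpha)}$. In both cases the matching lower bound from \cite{Bendikov} closes the argument.

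\emph{Case 3 ($K$ of exponential growth with $\Phi_K(n)\simeq\exp(-n^{1/3})$).} This is the delicate case; the key observation is that a finitely supported $\nu$, which gives only $F_K(n)\simeq n^{1/3}$, does \emph{not} reproduce the exponent $(d+1)/(d+1+\alpha)$, so $\nu$ must be taken as spread out as the weak-$\alpha$-moment condition on $K$ allows. I would invoke \cite{Bendikov} to choose a symmetric $\nu$ on $K$ with $\nu(e_K)>0$, generating support, $W(\rho_\alpha,\nu)<\infty$, and $F_K(n)=-\log\nu^{(2n)}(e_K)\gtrsim n^{1/(1+\alpha)}$ --- this is the statement that on a group of exponential growth the smallest return probability compatible with a finite weak $\alpha$-moment is $\exp(-n^{1/(1+\alpha)})$ and that this rate is attained, and it is exactly here that the hypothesis $\Phi_K(n)\simeq\exp(-n^{1/3})$ (equivalently, the sharpest F\o lner/Nash-type inequality available for such $K$) is used. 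Then $\nu$ satisfies the upper weak scaling assumption (US-$B_n$-$a_n$) relative to $F(s)=s^{1/(1+\alpha)}$, a power function of exponent $\gamma=1/(1+\alpha)$ in the $\gamma$-class; the scaling equation, using $\det B_{a_n}\simeq a_n^{d/\alpha}$, reads $a_n^{1+d/(1+\alpha)}\simeq n^{\alpha/(1+\alpha)}$, so $a_n\simeq n^{\alpha/(d+1+\alpha)}$. The first bullet of Theorem~\ref{ReturnBounds} then gives $\log\phi^{(2n)}(e)\lesssim -n/a_n\simeq -n^{(d+1)/(d+1+\alpha)}$; equivalently, this is the rate $n^{(\gamma+\tau(1-\gamma))/(1+\tau(1-\gamma))}$ of Theorem~\ref{DVBK} with $\gamma=1/(1+\alpha)$ and $\tau=d/\alpha$. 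Combined with the \cite{Bendikov} lower bound this gives $\log\widetilde{\Phi}_{G,\rho_\alpha}(n)\simeq -n^{(d+1)/(d+1+\alpha)}$.

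The serious difficulty is confined to Case~3, and it is not in the large-deviation step --- Theorems~\ref{ReturnBounds}--\ref{ReturnAsymptotic} do that mechanically once the scaling data are in place --- but in producing, on an arbitrary group $K$ of exponential growth, a single symmetric measure $\nu$ that \emph{simultaneously} has a finite weak $\alpha$-moment and return probability $\lesssim\exp(-c\,n^{1/(1+\alpha)})$; I would import this, and the matching lower bounds on $\widetilde{\Phi}_{G,\rho_\alpha}$, from \cite{Bendikov}. The remaining points are routine bookkeeping: that $\phi=\nu*\mu*\nu$ is symmetric, has $W(\rho_\alpha,\phi)<\infty$ by fact (ii), and has generating support so that $\widetilde{\Phi}_{G,\rho_\alpha}(n)\le\phi^{(2n)}(e)$; that $\mu$ does verify (C-$B_n$) with the stated $B_n$ (a classical local-limit fact, cf.\ Theorem~\ref{LocalLimit}); and that the chosen $F$ and $F_K$ have the required concavity/subadditivity/monotonicity (standard for $\mathbf{1}_{(0,\infty)}$, $\log(1+s)$, $s^\gamma$, and for $n\mapsto -\log\nu^{(2n)}(e_K)$, which is concave since $n\mapsto\nu^{(2n)}(e_K)$ is log-convex).
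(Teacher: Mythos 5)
Your proposal is correct and follows essentially the same route as the paper: in each case the upper bound is produced by the switch--walk--switch measure $q=\nu*\mu_\alpha*\nu$ with $\mu_\alpha(x)\simeq(1+\|x\|)^{-d-\alpha}$ on the base, $\nu$ finitely supported when $K$ is finite or of polynomial growth, and, in the exponential-growth case, a spread-out $\nu_\alpha$ with $W(\rho_\alpha,\nu_\alpha)<\infty$ and $\nu_\alpha^{(2n)}(e_K)\lesssim\exp(-cn^{1/(1+\alpha)})$ built from the F\o lner/Nash profile of $K$ (which the paper makes explicit via $\nu_\alpha=\sum p_i|U^{4^i}|^{-1}\mathbf 1_{U^{4^i}}$ with $p_i\propto 4^{-i\alpha}$, citing \cite{Bendikov}); the lower bounds are imported from \cite{Bendikov} after computing $\Phi_G$ via the same large-deviation theorems. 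You correctly pinpoint that the only delicate point is case 3, and that a finitely supported $\nu$ there fails to give the right exponent.
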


\begin{proof}
The lower bounds can be obtained by applying  \cite[Theorem 3.3]{Bendikov}. 
For this purpose, one needs to compute the function $\Phi_G$.
For $q=\nu \ast
\mu \ast \nu $ on $G=K\wr 
\mathbb{Z}
^{d},$ where $\mu $ and $\nu $ are associated with simple random walk (with holding) on $%
\mathbb{Z}
^{d}$ and $K$ respectively, we can apply Theorem \ref{ReturnBounds} to
obtain  $q^{(2n)}(e) \simeq \Phi_{G}(n).$ 
The case when $K$ is finite is already treated in \cite{Pittet2002,Bendikov}.
When $K$ has polynomial
volume growth, then $\nu ^{(2n)}(o)\asymp n^{-\frac{D}{2}}$ and, as in Example %
\ref{LampPowerDecay},%
\begin{equation*}
\lim_{n\rightarrow \infty }\frac{1}{n^{\frac{d}{d+2}}(\log n)^{\frac{2}{d+2}}%
}\log q^{(2n)}(e)=-c_{q}.
\end{equation*}%
If  $K$ is such that $\Phi_K(n)\simeq \exp(-n^{1/3})$ then  Example 
\ref{LampsExponential} yields
\begin{equation*}
\log
q^{(2n)}(e)\simeq \log \Phi_G(n)\simeq 
-n^{\frac{d+1}{d+3}}.
\end{equation*}%
These estimates on $\Phi_{G}$ allow us to appeal to
\cite[Theorem 3.3]{Bendikov} to obtain the stated lower
bounds for $\Phi_{G,\rho_\alpha}$.

To prove the  stated upper bounds, it suffices to exhibit a
probability measure measure in
$\widetilde{\mathcal{S}}_{G,\rho _\alpha}$ that has the proper decay. 
On $\mathbb{Z}
^{d}$, set 
\begin{equation*}
\mu _{\alpha }(x)=\frac{c_{\alpha }}{(1+\| x\| )^{\alpha +1}%
}.
\end{equation*}%
Then $\mu_\alpha$ is in the domain of normal attraction of 
the rotationally symmetric $\alpha $%
-stable distribution on $%
\mathbb{R}
^{d}$ and it has a finite  weak $\alpha $-moment.

In the  case when $K$ is of polynomial volume growth, take $q_{\alpha }=\nu
\ast \mu _{\alpha }\ast \nu $, where $\nu $ is simple random walk on $K$.
Then $\nu \ast \mu _{\alpha }\ast \nu $ has weak $\alpha$-moment and, by
Example \ref{LampPowerDecay},%
\begin{equation*}
\lim_{n\rightarrow \infty }\frac{1}{n^{\frac{d}{d+\alpha }}(\log n)^{\frac{%
\alpha }{d+\alpha }}}\log q_{\alpha }^{(2n)}(e)=-c_{q_{\alpha }}.
\end{equation*}%
Therefore in this case%
\begin{equation*}
\log \widetilde{\Phi }_{G,\rho _{\alpha }}(n)\leq -cn^{\frac{d}{d+\alpha }%
}(\log n)^{\frac{\alpha }{d+\alpha }}.
\end{equation*}
This matches the previously proved lower bound.

In the second case, when $K$ has
exponential growth, let $U$ be a symmetric generating set of $K.$ As in
\cite[Theorem 4.10]{Bendikov}, pick $p_{i}=c_\alpha 4^{-i\alpha }$ 
with $\sum_1^\infty p_i=1$ and set 
\begin{equation*}
\nu _{\alpha }=\sum_{i=1}^{\infty }\frac{p_i}{\left\vert
U^{4^{i}}\right\vert }\mathbf{1}_{U^{4^{i}}}.
\end{equation*}%
Then $\nu _{\alpha }$ has weak $\alpha $-moment on $K$ and, 
by \cite[Theorem 4.1]{Bendikov} 
$$\nu _{\alpha }^{(n)}(e_K)\leq \exp (-cn^{\frac{1}{1+\alpha }}).$$
Then $\nu _{\alpha }\ast \mu _{\alpha }\ast \nu _{\alpha }$ has weak $\alpha 
$-moment on $G$. Applying Theorem \ref{ReturnBounds}
and the computations of Example \ref{LampsExponential} to $%
q_{\alpha }=\nu _{\alpha }\ast \mu _{\alpha }\ast \nu _{\alpha },$ we obtain
\begin{equation*}
\lim_{n\rightarrow \infty }\sup \frac{1}{n^{\frac{d+1 }{d+\alpha +1}}}%
\log q_{\alpha }^{(2n)}(e)\leq -c_{q_{\alpha }}.
\end{equation*}
This gives the desired upper bound on $\Phi_{G,\rho_\alpha}$.
\end{proof}
\end{exa}

\begin{exa}
Consider the iterated wreath product 
\begin{equation*}
G=(\dots( K\wr \mathbb{Z} ^{d_{1}})\wr \dots)\wr \mathbb{Z}
^{d_{r}},\;\;d_{i}\in \mathbb{N} _{+}.
\end{equation*}
Set 
\begin{equation*}
d=\sum_1^rd_i.
\end{equation*}
Fix $\alpha\in (0,2)$. If $K$ is finite then we we have 
\begin{equation*}
\log \widetilde{\Phi }_{G,\rho _{\alpha }}(n)\simeq -n^{\frac{d }{\alpha +d}%
}.
\end{equation*}
If $K$ has polynomial volume growth, then 
\begin{equation*}
\log \widetilde{\Phi }_{G,\rho _{\alpha }}(n)\simeq -cn^{\frac{d}{\alpha +d}%
}(\log n)^{\frac{\alpha }{\alpha +d}}.
\end{equation*}
These are the results stated as Theorem \ref{theo-alphaiter} in the
introduction.

\begin{proof} 
As in the previous example, the lower bounds follows from 
\cite[Theorem 3.3]{Bendikov} and a lower bound on $\log \Phi_G$.
By example \ref{Iterative},      
\begin{equation*}
\log \Phi_G(n)\simeq n^{\frac{d}{%
2+d}}(\log n)^{\frac{2}{2+d}}.
\end{equation*}%
Hence \cite[Theorem 3.3]{Bendikov} gives%
\begin{equation*}
\log \widetilde{\Phi }_{G,\rho _{\alpha }}(n)\geq -C_{\alpha }n^{\frac{%
d}{\alpha +d}}(\log n)^{\frac{\alpha }{\alpha
+d}}.
\end{equation*}

For the upper bound, let $\mu _{\alpha ,i}$ be a symmetric $\alpha $-stable
like probability measure on $%
\mathbb{Z}
^{d_{i}}$. Let $q_{\alpha ,1}=\mu _{\alpha ,0}\ast \mu _{\alpha ,1}\ast \mu
_{\alpha ,0},$ and iteratively define $q_{\alpha ,i+1}=q_{\alpha ,i}\ast \mu
_{\alpha ,i+1}\ast q_{\alpha ,i}.$ Then it's clear that $q_{\alpha ,r}$ has
a finite weak $\alpha $-moment on $G$ and, as in example \ref{Iterative}, 
\begin{equation*}
\lim_{n\rightarrow \infty }\frac{1}{n^{\frac{d}{\alpha
+d}}(\log n)^{\frac{\alpha }{\alpha +d}}}\log
q_{\alpha ,r}^{(2n)}(e)=-c_{\alpha ,r}.
\end{equation*}%
Therefore%
\begin{equation*}
\log \widetilde{\Phi }_{G,\rho _{\alpha }}(n)\leq -cn^{\frac{d
}{\alpha +d}}(\log n)^{\frac{\alpha }{\alpha +d}
}.
\end{equation*}
\end{proof}
\end{exa}

\section{Donsker and Varadhan type large deviations}

\label{sec-DV}

\setcounter{equation}{0}

The goal of this section is to outline the proof of Theorem \ref{Asymptotic}%
, the key result of this article. The proof follows \cite{Donsker1979}
closely. Several other classical sources are also needed to put together the
necessary details.

\subsection{Statement of the large deviation principle in $L^1$}

On $\mathbb{Z}^d$, we fix a symmetric probability measure $\mu$ and an
operator-stable law $\eta$ such that the convergence assumption (C-$B_n$) is
satisfied.

We need to introduce some notation from \cite{Donsker1979} in order to state
the results. Let $\pi $ be the projection map $\pi :%
\mathbb{R}
^{d}\rightarrow 
\mathbb{R}
^{d}/%
\mathbb{Z}
^{d}$, and let $\mathbb{T}$ denote the $d$-dimensional torus which we also
identify with the fundamental domain $[-\frac{1}{2},\frac{1}{2})^{d}.$

For $\lambda >0,$ set 
\begin{equation*}
\mathcal{L}_{\lambda }^{(n)}=\pi \left( B_{\left\lfloor \lambda
a_{n}\right\rfloor }^{-1}(%
\mathbb{Z}
^{d})\right) .
\end{equation*}%
That is, we take the image of the original lattice $%
\mathbb{Z}
^{d}$ under the transformation $B_{\left\lfloor \lambda a_{n}\right\rfloor
}^{-1} $, and project it to the torus $\mathbb{T}.$ Then $\mathcal{L}%
_{\lambda }^{(n)}$ is a cocompact lattice on $\mathbb{T}$ and the volume of
the fundamental domain $\mathbb{T}/\mathcal{L}_{\lambda }^{(n)}$ is $%
\left\vert \det B_{\left\lfloor \lambda a_{n}\right\rfloor }^{-1}\right\vert
.$ This is the case because we assume that the matrices $B_m$, $m=1,2\dots$,
have integer entries so that $B_m \mathbb{Z}^d\subset \mathbb{Z}^d$.

In what follows, symbols decorated with $\widetilde{}$ \ are always used to
describe quantities associated with the projected random walk on the torus.
Note that the construction depends on the choice of sequence $a_{n}$ and
parameter $\lambda ,$ for simplicity we will drop reference to $a_{n}$ and $%
\lambda $ when no confusion arises.

Under the projection map $\pi ,$ we can push forward the measure $%
B_{\left\lfloor \lambda a_{n}\right\rfloor }^{-1}\mu $ on $B_{\left\lfloor
\lambda a_{n}\right\rfloor }^{-1}(%
\mathbb{Z}
^{d})$ to a measure $\widetilde{\mu }_{n,\lambda }$ on $\mathcal{L}_{\lambda
}^{(n)}$, that is

\begin{equation*}
\widetilde{\mu }_{n,\lambda }(y)=\sum_{x\in 
\mathbb{Z}
^{d}:\pi \left( B_{\left\lfloor \lambda a_{n}\right\rfloor }^{-1}(x)\right)
=y}\mu (x).
\end{equation*}

Let $\widetilde{S}_{k}^{(n)}$ be the random walk on $\mathcal{L}_{\lambda
}^{(n)} $ associated with $\widetilde{\mu }_{n,\lambda }$, starting at $0.$
It's easy to check that%
\begin{equation*}
\widetilde{S}_{k}^{(n)}\overset{\mbox{\tiny law}}{=}\pi \left(
B_{\left\lfloor \lambda a_{n}\right\rfloor }^{-1}(S_{k})\right) .
\end{equation*}

Consider the occupation time measure $\widetilde{L}_{k}^{(n)}$ defined as 
\begin{equation*}
\widetilde{L}_{k}^{(n)}(A)=\frac{1}{k}\sum_{j=1}^{k}\chi _{A}\left( 
\widetilde{S}_{k}^{(n)}\right) ,
\end{equation*}%
for any Borel set $A$ in $\mathbb{T}$.

For $\mathbf{T}=\mathbb{T}$ or $\mathbf{T}=\overline{\Omega}$ with $\Omega$
an open set in $\mathbb{R}^d$, let $\mathcal{M}_{1}(\mathbf{T})$ be the
space of probability measures on $\mathbf{T}$ endowed with the weak
topology. Let $\mathcal{L}_1(\mathbf{T})$ the space of all probability
densities on $\mathbf{T}$ endowed with the $L^1$-topology.

Let $P_{k}^{(n)}$ be the distribution of $\widetilde{L}_{k}^{(n)}$ in $%
\mathcal{M}_{1}(\mathbb{T})$, a measure on measures. Define the scaled
indicator function $\chi _{n}:[-\frac{1}{2},\frac{1}{2})^{d}\rightarrow 
\mathbb{R}$ by setting 
\begin{equation*}
\chi _{n}(x)=|\det B_{\left\lfloor \lambda a_{n}\right\rfloor }|^{-1}\chi
_{B_{\left\lfloor \lambda a_{n}\right\rfloor }^{-1}\left( [-\frac{1}{2},%
\frac{1}{2})^{d}\right) }.
\end{equation*}%
Define%
\begin{equation*}
\widetilde{L}_{k}^{n}=\widetilde{L}_{k}^{(n)}\ast \chi _{n}.
\end{equation*}%
Let $\widetilde{P}_{k}^{n}$ be the distribution of $\widetilde{L}_{k}^{n}$
in $\mathcal{M}_{1}(\mathbb{T})$. With this mollification, $\widetilde{L}%
_{k}^{n}$ is absolutely continuous with respect to Lebesgue measure on $%
\mathbb{T}$. Let $\widetilde{f}_{k,\lambda }^{(n)}$ denote the density of $%
\widetilde{P}_{k}^{n}$ with respect to Lebesgue measure. Let $Q_{k,\lambda
}^{(n)}$ be the distribution of $\widetilde{f}_{k,\lambda }^{(n)}$ in $%
\mathcal{L}_{1}(\mathbb{T}).$

We will use the following function spaces (this notation is consistent with 
\cite{Donsker1975,Donsker1979}): 
\begin{eqnarray*}
\mathcal{U} &=&\{u\in C^{\infty }(\mathbb{%
\mathbb{R}
}^{d}),\inf u>0,\sup u<\infty \}, \\
\mathcal{U}_{\mathbb{T}} &=&\{u\in C^{\infty }(\mathbb{T}),u>0\}, \\
\mathcal{F}_{\mathbb{T}} &=&\{f\in C^{\infty }(\mathbb{T}),f\geq
0,\left\Vert f\right\Vert _{1}=1\} , \\
\mathcal{F}_{\Omega } &=&\{f\in C_c^{\infty }(\Omega),f\geq 0,\left\Vert
f\right\Vert _{1}=1\},
\end{eqnarray*}
where $\Omega$ is an open subset of $\mathbb{R}^d$.

\begin{theo}[Large deviation principle in $\mathcal{L}_{1}(\mathbb{T})$]
\label{TorusLDP} Assume that the convergence assumption \emph{(\ref%
{ConvAssum})} is satisfied. Let $a_{n}$ to be any sequence of positive
integers increasing to infinity and satisfying $a_{n}\left\vert \det
B_{a_{n}}\right\vert \leq n$. 
Let $Q_{n,\lambda }^{(n)}$ be the distribution
of $\widetilde{f}_{n,\lambda }^{(n)}$ on $\mathcal{L}_{1}(\mathbb{T}).$ Then
we have the large deviation principle in the strong $\mathcal{L}_1(\mathbb{T}%
)$ topology. Namely, for any Borel set $D$ in $\mathcal{L}_1(\mathbb{T}),$ 
\begin{eqnarray*}
-\lambda ^{-1}\inf_{f\in D^{\circ }}I_{L_{\widetilde{\eta }}}(f) &\leq
&\liminf_{n\rightarrow \infty }\frac{1}{n/{a_{n}}}\log Q_{n,\lambda
}^{(n)}(D) \\
&\leq &\limsup_{n\rightarrow \infty }\frac{1}{n/{a_{n}}}\log Q_{n,\lambda
}^{(n)}(D)\leq -\lambda ^{-1}\inf_{f\in \overline{D}}I_{L_{\widetilde{\eta }%
}}(f),
\end{eqnarray*}%
and the rate function is given by 
\begin{equation*}
I_{L_{\widetilde{\eta }}}(f)=-\inf_{u\in \mathcal{U}_{\mathbb{T}}}\int_{%
\mathbb{T}}\frac{L_{\widetilde{\eta }}u}{u}(x)f(x)dx=\mathcal{E}_{\widetilde{%
\eta }}(\sqrt{f},\sqrt{f}).
\end{equation*}
\end{theo}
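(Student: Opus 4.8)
The plan is to follow the classical Donsker--Varadhan scheme (as in \cite{Donsker1979,Donsker1975}), carefully tracking how the operator-stable scaling $B_n$ enters, and reduce the large deviation principle for the density $\widetilde f_{n,\lambda}^{(n)}$ to an abstract Donsker--Varadhan lower/upper bound for the empirical occupation measure of a Markov chain on a compact space (the torus $\mathbb{T}$). The first step is to identify the relevant ``transition operator'': on the lattice $\mathcal{L}_\lambda^{(n)}$, the projected walk $\widetilde S_k^{(n)}$ has one-step kernel $\widetilde\mu_{n,\lambda}$, and the key analytic input is that, by the convergence assumption (C-$B_n$) together with Theorem~\ref{theo-conv}(3)--(4) and the local limit theorem (Theorem~\ref{LocalLimit}), the rescaled generators $\frac{a_n}{n}\,n(\widetilde\mu_{n,\lambda}-\delta_0)$ converge, after the time-change by $n/a_n$ and the space dilation by $B_{\lfloor\lambda a_n\rfloor}$, to $\lambda^{-1}L_{\widetilde\eta}$, where $L_{\widetilde\eta}$ is the generator of the semigroup $\widetilde\eta^t$ projected to the torus. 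Here the identity $B_n(B_{\lfloor nt\rfloor})^{-1}\to t^{-E}$ from Theorem~\ref{RegularVariation}, combined with Proposition~\ref{Homogenous} (which gives $a_{\lfloor\lambda n\rfloor}/a_n\to\lambda^\kappa$), is what guarantees that the scaling is internally consistent and produces the factor $\lambda^{-1}$.

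Next I would establish the large deviation principle in two halves. For the \emph{upper bound}, I would use the standard exponential tilting / Laplace-transform argument: for $u\in\mathcal{U}_\mathbb{T}$ positive and smooth, $M_k^u=\frac{u(\widetilde S_k^{(n)})}{u(\widetilde S_0^{(n)})}\prod_{j<k}\big(1+\tfrac{(Q_n-I)u}{u}(\widetilde S_j^{(n)})\big)^{-1}$ is a martingale (with $Q_n$ the one-step operator), yielding $\mathbf E\exp\!\big(\sum_{j<k}\tfrac{(I-Q_n)u}{u}(\widetilde S_j^{(n)})\big)\le \sup u/\inf u$; Jensen's inequality and passage through the mollification $\chi_n$ then give, after taking $k=n$ and $n\to\infty$, the upper bound with rate function $-\lambda^{-1}\inf_{u\in\mathcal{U}_\mathbb{T}}\int_\mathbb{T}\frac{L_{\widetilde\eta}u}{u}f\,dx$. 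Exponential tightness on $\mathcal{L}_1(\mathbb{T})$ (needed to upgrade the weak-topology bound to the strong $L^1$-topology) follows from the mollification by $\chi_n$ and a uniform equicontinuity estimate for $\widetilde f_{n,\lambda}^{(n)}$ coming from the local limit theorem. For the \emph{lower bound}, I would fix a smooth $f\in\mathcal{F}_\mathbb{T}$, perform the Doob $h$-transform of the chain with $h$ chosen (following Donsker--Varadhan) so that $f$ becomes the invariant density of the transformed chain, verify by the ergodic theorem that $\widetilde f_{n,\lambda}^{(n)}\to f$ in $L^1$ under the transformed measure, and estimate the Radon--Nikodym derivative between the original and transformed path measures to extract $\exp(-(n/a_n)\lambda^{-1} I_{L_{\widetilde\eta}}(f)+o(n/a_n))$; a routine approximation argument extends this from smooth $f$ to all of $D^\circ$.

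The last step is the identification of the rate function, namely $-\inf_{u\in\mathcal{U}_\mathbb{T}}\int_\mathbb{T}\frac{L_{\widetilde\eta}u}{u}f\,dx=\mathcal E_{\widetilde\eta}(\sqrt f,\sqrt f)$. This is where the symmetry of $\mu$ (hence self-adjointness of $L_{\widetilde\eta}$ on $L^2(\mathbb{T})$) is essential: writing $u=e^{2\phi/\sqrt f}$-type substitutions or, more cleanly, using the variational characterization of the ground-state energy of the Schr\"odinger-type operator $-L_{\widetilde\eta}-V$, one shows the supremum of $\int \frac{L_{\widetilde\eta}u}{u}f$ is attained (formally) at $u=\sqrt f$ and equals $-\mathcal E_{\widetilde\eta}(\sqrt f,\sqrt f)$; for the nonlocal (jump) part of $L_{\widetilde\eta}$ this uses the elementary inequality $(a-b)(\tfrac1a-\tfrac1b)\le -(\sqrt a-\sqrt b)^2(\tfrac1a+\tfrac1b)$ pointwise on the jump kernel, exactly as in the treatment of stable processes, while for the Gaussian part it is the classical computation $\int\frac{\Delta u}{u}f=-\int|\nabla\sqrt f|^2-\int|\nabla(u/\sqrt f)|^2 (f/u^2)\cdot$(correction) reducing to $-\int|\nabla\sqrt f|^2$ at the optimum. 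I expect the main obstacle to be the \emph{uniformity} of the convergence of generators and of the local limit theorem across the family of lattices $\mathcal{L}_\lambda^{(n)}$ simultaneously with the time rescaling by $n/a_n$: one must control the error in $\frac{n}{a_n}(I-\widetilde\mu_{n,\lambda})u\to \lambda^{-1}L_{\widetilde\eta}u$ uniformly in $n$ for test functions $u$, and check that the mollification by $\chi_n$ does not destroy this; handling the possibly non-diagonalizable exponent $E$ and the fact that $B_n$ only approximates $n^E$ (with entry-wise integer rounding) adds the technical subtlety that must be absorbed here.
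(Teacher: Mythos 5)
Your plan is broadly the same as the paper's: establish a large deviation principle in the weak topology of $\mathcal{M}_1(\mathbb{T})$ and then upgrade to the strong $L^1$ topology by controlling the mollified density. The weak-topology part is fine. The paper phrases it a bit differently --- it proves the exponential moment asymptotics (Proposition~\ref{pro-moment}) using Feynman--Kac estimates adapted from \cite{Chen2010} for the lower bound and \cite[Theorem 3]{Donsker1979} together with Varadhan's lemma for the upper bound, then invokes G\"artner--Ellis --- whereas you propose to run the exponential martingale and the Doob $h$-transform by hand; these are precisely the internals of the cited results, so the two routes are equivalent. Your treatment of the rate-function identification, using self-adjointness plus the elementary pointwise inequality on the jump part, is also the standard one.

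Where your proposal is too loose is the topology upgrade, which is the hardest step. You appeal to ``exponential tightness on $\mathcal{L}_1(\mathbb{T})$'' plus a ``uniform equicontinuity estimate''. What is actually needed is the super-exponential mollification estimate of Theorem~\ref{superexponential}: for every $\delta>0$,
\begin{equation*}
\lim_{\epsilon\to 0}\limsup_{n\to\infty}\frac{a_n}{n}\log Q_{n,\lambda}^{(n)}\left(f:\int_{\mathbb{T}}\left|K_\epsilon f(x)-f(x)\right|\,dx\ge\delta\right)=-\infty.
\end{equation*}
Exponential tightness alone does not give this; one needs an exponentially good approximation in the Dembo--Zeitouni sense, and proving it requires an adaptation of all of \cite[Lemmas 4.1--4.5]{Donsker1979}. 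Your instinct that the local limit theorem (Theorem~\ref{LocalLimit}) drives the argument is correct --- its role is to yield the uniform Harnack-type bound $\sup_x\widetilde\mu_n^{\ast a_n}(x)\le c\inf_x\widetilde\mu_n^{\ast a_n}(x)$ of Lemma~\ref{lem-DVharnack} --- but you omit the $\delta$-net argument with cardinality bound $J\le(8/\delta+1)^{|\det B_{\lfloor\lambda a_n\rfloor}|}$, the uniformity-in-starting-point lemma, the Legendre-transform bound, and the error-term control that together make the estimate work. In particular, the hypothesis $a_n|\det B_{a_n}|\le n$, which you never invoke, is exactly what keeps the $\delta$-net count from growing faster than $e^{O(n/a_n)}$. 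As written, the $L^1$ upgrade in your proposal has a genuine gap.
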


This result will be useful in the upper bound direction. To obtain a lower
bound, we need to have a version with Dirichlet boundary condition.

Let $L_{k}^{(n)}$ be the occupation time measure of the random walk $%
S_{k}^{(n)}=B_{a_{n}}^{-1}(S_{k}).$ Perform the same mollification as above
but on $%
\mathbb{R}
^{d}$, setting 
\begin{equation*}
L_{k}^{n}=L_{k}^{(n)}\ast \chi _{n}.
\end{equation*}%
Then $L_{k}^{n}$ is absolutely continuous with respect to Lebesgue measure.
Let $f_{k}^{(n)}$ denotes the corresponding density. Let $\mathcal{G}$ be
the collection of all bounded domain $\Omega $ in $%
\mathbb{R}
^{d}$ such that $0\in \Omega $ and $\partial \Omega $ has Lebesgue measure $%
0.$ For any Borel set $A\subset \mathcal{L}_1(\overline{\Omega} ),$ define%
\begin{equation*}
Q_{k,\Omega }^{(n)}(A):=P\left( f_{k}^{(n)}\in A\right) .
\end{equation*}%
That is, $Q_{k,\Omega }^{(n)}$ is the distribution of the occupation time
measure of $S_{j}^{(n)}$ at time $k$ with Dirichlet boundary on $\partial
\Omega .$ As in the case of the projected version, we have the following
large deviation principle.

\begin{theo}[Large deviation principle in $\mathcal{L}_1(\overline{\Omega} )$%
]
\label{DirichletLDP}Under the convergence assumption \emph{(C-$B_{n}$)}, let 
$a_{n}$ be any sequence of positive integers increasing to infinity
satisfying $a_{n}\left\vert \det B_{a_{n}}\right\vert \leq n.$ Let $%
Q_{n,\Omega }^{(n)}$ be the distribution of $f_{n}^{(n)}$ in $\mathcal{L}_1(%
\overline{\Omega} )$. Then we have large deviation principle in the strong $%
\mathcal{L}_1(\overline{\Omega} )$ topology. Namely, for any Borel set $%
A\subset \mathcal{L}_1(\overline{\Omega} ),$ 
\begin{eqnarray*}
-\inf_{f\in A^{\circ }}I_{L_{\eta }}(f) &\leq &\liminf_{n\rightarrow \infty }%
\frac{1}{n/{a_{n}}}\log Q_{n,\Omega }^{(n)}(A) \\
&\leq &\limsup_{n\rightarrow \infty }\frac{1}{n/{a_{n}}}\log Q_{n,\Omega
}^{(n)}(A)\leq -\inf_{f\in \overline{A}}I_{L_{\eta }}(\eta ),
\end{eqnarray*}%
and 
\begin{equation*}
I_{L_{\eta }}(f)=-\inf_{u\in \mathcal{U}}\int_{\Omega }\frac{L_{\eta }u}{u}%
(x)f(x)dx=\mathcal{E}_{\eta }(\sqrt{f},\sqrt{f}).
\end{equation*}
\end{theo}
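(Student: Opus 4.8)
The plan is to follow the Donsker--Varadhan scheme of \cite{Donsker1979}, running essentially in parallel with the proof of Theorem \ref{TorusLDP} and pointing out only where the operator-stable normalization and the Dirichlet boundary condition require attention. The object of study is the time-homogeneous random walk $S_k^{(n)}=B_{a_n}^{-1}(S_k)$ on the lattice $B_{a_n}^{-1}(\mathbb{Z}^d)$, with one-step kernel $\mu_n=B_{a_n}^{-1}\mu$, run for $k=n$ steps and killed at first exit from $\Omega$; thus $f_n^{(n)}\in\mathcal{L}_1(\overline\Omega)$ precisely when the ($\chi_n$-mollified) occupation density is supported in $\overline\Omega$. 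First I would record that, by the convergence assumption together with Theorem \ref{theo-conv}(4) and the regular variation of $(B_m)$ (Theorem \ref{RegularVariation}), $a_n(\mu_n-\delta_0)*u\to L_\eta u$ holds uniformly on compact sets for every $u\in\mathcal{U}$; since then the $n$-step transition operator behaves like $e^{(n/a_n)L_\eta}$, the correct speed is $n/a_n$.

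For the upper bound I would run the exponential (Feynman--Kac) martingale argument. For fixed $u\in\mathcal{U}$ one has $P_nu/u=1+a_n^{-1}(L_\eta u/u)+o(a_n^{-1})$ uniformly on compacts, with $L_\eta u/u$ bounded, while $\prod_{0\le j<k}u(S_{j+1}^{(n)})/(P_nu)(S_j^{(n)})$ is a mean-one martingale; telescoping its logarithm and using $\sup u<\infty$, $\inf u>0$ gives
\begin{equation*}
\mathbf{E}\!\left(\exp\!\Big(-\tfrac{n}{a_n}\int_{\overline\Omega}\tfrac{L_\eta u}{u}\,dL_n^{(n)}\Big)\mathbf{1}_{\{S_j^{(n)}\in\Omega,\ j\le n\}}\right)\le e^{o(n/a_n)}.
\end{equation*}
Covering a compact $\overline A\subset\mathcal{L}_1(\overline\Omega)$ by small $L^1$-balls, choosing on each ball a near-optimal $u$ for its centre, and then passing from compact to closed sets via exponential tightness (automatic because $\overline\Omega$ is compact and $I_{L_\eta}$ is lower semicontinuous) yields the stated $\limsup$ bound with $I_{L_\eta}(f)=-\inf_{u\in\mathcal{U}}\int_\Omega(L_\eta u/u)f$.

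For the lower bound I would reduce, as in \cite{Donsker1979}, to establishing $\liminf(a_n/n)\log Q_{n,\Omega}^{(n)}(N)\ge-\mathcal{E}_\eta(\sqrt f,\sqrt f)$ for every $L^1$-neighbourhood $N$ of a fixed $f\in\mathcal{F}_\Omega$, and then reach a general $f$ with $I_{L_\eta}(f)<\infty$ by $\mathcal{L}_1$-approximation (for $f\in\mathcal{F}_\Omega$ one has $\mathcal{E}_\eta(\sqrt f,\sqrt f)<\infty$ automatically, since $\sqrt f$ is Lipschitz with compact support, which is controlled by both the Gaussian and the Lévy parts of $\mathcal{E}_\eta$). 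Given such an $f$, I would perform a ground-state ($h$-)transform: with $h_n=\sqrt{f_n}$ for $f_n$ the lattice restriction of $f$, let $\mathbb{Q}$ be the law of the Markov chain with kernel $\mu_n^h(x,y)=\mu_n(y-x)h_n(y)/(\mu_n*h_n)(x)$. This chain is confined near $\operatorname{supp}f\subset\Omega$, and its $\chi_n$-mollified scaled occupation density converges to $f$ in $L^1(\overline\Omega)$ in $\mathbb{Q}$-probability, by the ergodic theorem and the uniform local limit theorem (Theorem \ref{LocalLimit}). Telescoping the Radon--Nikodym derivative along the path gives, on an event of $\mathbb{Q}$-probability tending to one,
\begin{equation*}
\log\tfrac{dP}{d\mathbb{Q}}=\tfrac{n}{a_n}\int\tfrac{L_\eta\sqrt f}{\sqrt f}\,dL_n^{(n)}+o(n/a_n)=-\tfrac{n}{a_n}\,\mathcal{E}_\eta(\sqrt f,\sqrt f)+o(n/a_n),
\end{equation*}
where $\int(L_\eta\sqrt f)\sqrt f\,dx=-\mathcal{E}_\eta(\sqrt f,\sqrt f)$; since $Q_{n,\Omega}^{(n)}(N)=\mathbf{E}_{\mathbb{Q}}\!\big(\tfrac{dP}{d\mathbb{Q}}\mathbf{1}_{\{f_n^{(n)}\in N\}}\big)$, Jensen's inequality under $\mathbb{Q}(\cdot\mid f_n^{(n)}\in N)$ then finishes it.

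It remains to identify the rate function, $-\inf_{u\in\mathcal{U}}\int_\Omega(L_\eta u/u)f=\mathcal{E}_\eta(\sqrt f,\sqrt f)$. Splitting $L_\eta$ into its Gaussian and jump parts, the pointwise inequalities $|\nabla\log u-\tfrac12\nabla\log f|^2\ge0$ and (a rearrangement of AM--GM) $u(x)^2f(y)/u(y)+u(y)^2f(x)/u(x)\ge2u(x)u(y)\sqrt{f(x)f(y)}$ yield $-\int(L_\eta u/u)f\le\mathcal{E}_\eta(\sqrt f,\sqrt f)$ for every $u\in\mathcal{U}$, with equality recovered as $u\to\sqrt f$ (suitably regularized to stay in $\mathcal{U}$). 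I expect the lower bound to be the main obstacle: carrying out the $h$-transform and the attendant ergodic/concentration step in the non-isotropic matrix normalization---where $B_n$ need not be diagonalizable over $\mathbb{Z}$ and $\eta$ may be a genuine jump process---requires comparing the discrete tilted kernel with its continuous analogue uniformly, which is exactly what the uniform local limit theorem of \cite{Griffin1986} and the regular variation of $(B_m)$ (Theorems \ref{LocalLimit} and \ref{RegularVariation}) provide; the jump part of $L_\eta$ also makes both this step and the rate-function identification more delicate than in the classical diffusive setting.
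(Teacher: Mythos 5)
Your overall scheme (exponential change of measure for the upper bound, $h$-transform for the lower bound) is recognizably Donsker--Varadhan, but there is a genuine gap in the upper bound at the point where you pass from the weak topology to the strong $L^1$ topology. You write that exponential tightness in $\mathcal{L}_1(\overline\Omega)$ is ``automatic because $\overline\Omega$ is compact and $I_{L_\eta}$ is lower semicontinuous.'' Compactness of $\overline\Omega$ makes $\mathcal{M}_1(\overline\Omega)$ compact in the \emph{weak} topology, so exponential tightness is trivial \emph{there}; it does nothing for $L^1$. In $L^1(\overline\Omega)$ the unit ball is not compact, and a family of densities is precompact only if it is, in addition, uniformly $L^1$-equicontinuous under translation (Kolmogorov--Riesz/Fr\'echet). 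Establishing that the occupation density $f_n^{(n)}$ satisfies such an equicontinuity bound with \emph{super-exponential} probability is precisely the hard technical step that the paper isolates as Theorem~\ref{superexponential}: for every $\delta>0$, $\limsup_{\epsilon\to0}\limsup_n \frac{a_n}{n}\log Q_{n,\lambda}^{(n)}\bigl(\int|K_\epsilon f - f|\,dx\ge\delta\bigr)=-\infty$. Its proof occupies the chain of Lemmas surrounding it (the $\delta$-net count, the Harnack-type bound on $\widetilde\mu_n^{*a_n}$ coming from the local limit theorem of \cite{Griffin1986}, the Legendre transform bound, and the error-control lemma). Without this estimate you cannot legitimately cover a closed set of $\mathcal{L}_1(\overline\Omega)$ by finitely many $L^1$-balls and optimize $u$ on each, because a closed set need not be compact in $L^1$.

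Structurally, the paper also differs from your proposal by first proving the LDP in the \emph{weak} topology (Proposition~\ref{pro-moment} computes the exponential moment generating functional, feeding into G\"artner--Ellis) and then upgrading to $L^1$ via Theorem~\ref{superexponential}. Your upper-bound martingale and lower-bound $h$-transform essentially reproduce what goes into Proposition~\ref{pro-moment}, but you fold the topology upgrade into the upper-bound covering argument, which is where the unjustified claim about exponential tightness appears. You correctly identify where the local limit theorem (Theorem~\ref{LocalLimit}) enters the lower bound, but you should notice that it is equally indispensable in the topology upgrade on the upper-bound side (Lemma~\ref{lem-DVharnack}): without the uniform lattice comparison with the continuous density $g$, the super-exponential estimate has no traction.
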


The outline of the proof of these results is given in Section \ref{LDPproof}.
It follows \cite{Donsker1979} closely.

\subsection{Asymptotics of functional expressions}

\label{sec-DVBK} Throughout this short section, we fix a symmetric
probability measure $\mu $ on $\mathbb{Z}^{d}$ and an operator-stable law $%
\eta $ on $\mathbb{R}^{d}$ such that the convergence assumption (C-$B_{n}$)
and scaling assumption (S-$B_{n}$-$a_{n}$) of Definitions \ref{def-conv}-\ref%
{def-scal} are satisfied. In particular, in what follows, $(a_{n})$ is the
non-decreasing and regularly varying sequence of integers provided by
Definition \ref{def-scal} (see also Proposition \ref{Homogenous}). The
functions $F$ and $\widetilde{F}$ are as in Definition \ref{def-scal}. Let $%
(l(n,x))_{x\in \mathbb{Z}^{d}}$ be the occupation time vector up to time $n$
for the random walk driven $\mu $. The goal of this subsection is to use the
large deviation principles in $L^1$ to prove Theorem \ref{Asymptotic}.

\begin{pro}
\label{LowerBound} Under the above hypotheses, we have the lower bound%
\begin{eqnarray*}
\lefteqn{\lim \inf_{n\rightarrow \infty }\frac{a_{n}}{n}\log E\left( \exp
\left( -\sum_{x\in\mathbb{Z}^{d}}F(l(n,x))\right)\mathbf 1_{\{ \mbox{\em
\scriptsize supp}(L_n^{(n)})\subset\Omega\}}\right)} \hspace{1in} && \\
&\geq &-\inf_{f\in \mathcal{F}_{\Omega }}\left\{ \mathcal{E}_{\eta }(\sqrt{f}%
,\sqrt{f})+\int_{\Omega }\widetilde{F}(f(x))dx\right\} .
\end{eqnarray*}
\end{pro}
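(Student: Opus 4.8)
The plan is to deduce this lower bound from the Dirichlet-boundary large deviation principle of Theorem \ref{DirichletLDP} together with the scaling assumption (S-$B_n$-$a_n$), following the strategy of \cite{Biskup2001} adapted to the operator-stable setting. First I would rewrite the functional $\sum_{x\in\mathbb{Z}^d}F(l(n,x))$ in terms of the mollified, rescaled occupation density $f_n^{(n)}$ appearing in Theorem \ref{DirichletLDP}. Recall $L_n^{(n)}$ is the occupation measure of $S_k^{(n)}=B_{a_n}^{-1}(S_k)$ and $f_n^{(n)}=L_n^{(n)}*\chi_n$ is its density. Since the lattice $B_{a_n}^{-1}(\mathbb{Z}^d)$ has fundamental cell of volume $|\det B_{a_n}^{-1}|=1/\det B_{a_n}$, each atom of $L_n^{(n)}$ at a point $B_{a_n}^{-1}x$ has mass $l(n,x)/n$, and after convolving with $\chi_n$ this mass is spread uniformly over a cell of volume $\det(B_{a_n})^{-1}$, so on that cell $f_n^{(n)}\equiv \det(B_{a_n})\, l(n,x)/n$. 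Hence
\begin{equation*}
\sum_{x\in\mathbb{Z}^d}F(l(n,x)) = \sum_x F\!\left(\frac{n}{\det B_{a_n}}\cdot \frac{\det(B_{a_n})}{n} l(n,x)\right) = \det(B_{a_n})\int_{\mathbb{R}^d} \frac{\det(B_{a_n})}{n}\, F\!\left(\frac{n}{\det B_{a_n}} f_n^{(n)}(x)\right)\,\frac{dx}{\det(B_{a_n})} \cdot \frac{n}{a_n}\cdot \frac{a_n}{n},
\end{equation*}
and more usefully, multiplying by $a_n/n$, one gets $\frac{a_n}{n}\sum_x F(l(n,x)) = \int_{\mathbb{R}^d} \Psi_n(f_n^{(n)}(x))\,dx$ where $\Psi_n(y):=\frac{a_n\det(B_{a_n})}{n} F\!\big(\frac{n}{\det B_{a_n}} y\big)$ is exactly the quantity in (S-$B_n$-$a_n$), which converges to $\widetilde{F}(y)$ uniformly on compact subsets of $(0,\infty)$. (Here I am using that $a_n\det(B_{a_n})\le n$, i.e. $n/a_n = n/a_n$ is the relevant speed.)

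Next I would apply the LDP of Theorem \ref{DirichletLDP} to a continuity set. Fix $f\in\mathcal{F}_\Omega$; since $f$ is smooth, compactly supported in $\Omega$, and has $\|f\|_1=1$, a small $L^1$-ball $N_\delta(f)$ around $f$ satisfies, by the lower bound half of Theorem \ref{DirichletLDP},
\begin{equation*}
\liminf_{n\to\infty}\frac{a_n}{n}\log Q_{n,\Omega}^{(n)}(N_\delta(f)) \ge -\inf_{g\in N_\delta(f)^\circ} I_{L_\eta}(g) \ge -I_{L_\eta}(f) = -\mathcal{E}_\eta(\sqrt f,\sqrt f).
\end{equation*}
On the event $\{f_n^{(n)}\in N_\delta(f)\}$ the occupation measure is supported in $\Omega$ (which is what the indicator $\mathbf{1}_{\{\mathrm{supp}(L_n^{(n)})\subset\Omega\}}$ records), and moreover $\int\Psi_n(f_n^{(n)})\,dx$ is close to $\int_\Omega\widetilde{F}(f(x))\,dx$: writing $g=f_n^{(n)}$, one splits $\int_\Omega|\Psi_n(g)-\widetilde F(f)|$ into the region where $f$ is bounded below (where uniform convergence $\Psi_n\to\widetilde F$ and $L^1$-closeness of $g$ to $f$, together with the Lipschitz/concavity bound $0\le\widetilde F(y)\le\widetilde F(1)y$ and subadditivity $\Psi_n(y+y')\le\Psi_n(y)+\Psi_n(y')$, give smallness) and a region near $\{f=0\}$ of small measure (where one uses $\Psi_n(g)\le \Psi_n(\|g\|_\infty)$, or better the sublinear bound $\Psi_n(y)\le \widetilde F(1)y + o(1)$ coming from concavity with $\Psi_n(0)=0$, so the contribution is controlled by $\|g\|_{L^1(\text{small set})}+\,$error). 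Thus on this event $\frac{a_n}{n}\sum_x F(l(n,x)) \le \int_\Omega\widetilde F(f(x))\,dx + \epsilon_n(\delta)$ with $\epsilon_n(\delta)\to\epsilon(\delta)\to0$. Therefore
\begin{equation*}
\liminf_{n\to\infty}\frac{a_n}{n}\log \mathbf{E}\!\left(e^{-\sum_x F(l(n,x))}\mathbf{1}_{\{\mathrm{supp}(L_n^{(n)})\subset\Omega\}}\right) \ge -\mathcal{E}_\eta(\sqrt f,\sqrt f) - \int_\Omega\widetilde F(f(x))\,dx - \epsilon(\delta),
\end{equation*}
and letting $\delta\to0$ and then taking the supremum over $f\in\mathcal{F}_\Omega$ (equivalently infimum of the right side) yields the claim.

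The main obstacle is the passage from the pointwise/uniform-on-compacts convergence $\Psi_n\to\widetilde F$ to control of the \emph{integrated} functional $\int_\Omega\Psi_n(f_n^{(n)})\,dx$ uniformly over the random density $g=f_n^{(n)}$ lying in an $L^1$-neighborhood of the fixed smooth $f$ — in particular handling the set where $f_n^{(n)}$ is large or where $f$ vanishes, since there $\Psi_n$ need not converge uniformly. This is exactly where the structural hypotheses on $F$ in Definition \ref{def-scal} (concavity, subadditivity, $F(0)=0$, monotonicity) and Proposition \ref{Homogenous} ($\widetilde F(y)=\widetilde F(1)y^\gamma$, hence the a priori bound $\widetilde F(y)\le \widetilde F(1)\max(y,y^\gamma)$ and, by concavity, $\Psi_n(y)\le C(y+1)$ uniformly in $n$) are essential: they furnish the domination needed to justify the limit. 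A secondary technical point is ensuring the mollification error (replacing $L_n^{(n)}$ by $L_n^{(n)}*\chi_n$) does not affect the functional asymptotically, which follows because $\chi_n$ is supported in a cell of vanishing diameter $\|B_{a_n}^{-1}\|\to0$ and $F$ is subadditive, so $F(l(n,x))$ and its mollified version differ by a lower-order sum; this is the same device used in \cite{Biskup2001} and in \cite{Donsker1979}.
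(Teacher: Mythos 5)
Your overall strategy coincides with the paper's: the published proof defers to \cite[Lemma~4.2]{Biskup2001} and Varadhan's lemma applied on top of Theorem~\ref{DirichletLDP}, and you have correctly unpacked that reference. The identity $\frac{a_n}{n}\sum_x F(l(n,x)) = \int_{\mathbb{R}^d}\Psi_n\bigl(f_n^{(n)}(x)\bigr)\,dx$ (exact, up to a negligible correction at the starting site) is the right way to feed the scaling assumption into the Dirichlet LDP, and the ``secondary technical point'' you flag about mollification is in fact a non-issue for precisely this reason: the mollification $*\chi_n$ does not introduce an error here, it only makes the occupation measure absolutely continuous.

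There is, however, a genuine gap in the control of $\int_\Omega\Psi_n(g)\,dx$ uniformly over $g$ in an $L^1$-ball $N_\delta(f)$. The bound you invoke, $\Psi_n(y)\le\widetilde F(1)y+o(1)$, is not available for $\gamma<1$: concavity with $\Psi_n(0)=0$ only yields $\Psi_n(y)\le\Psi_n(y_0)\max(1,y/y_0)$, and the constant term $\Psi_n(y_0)\to\widetilde F(1)y_0^{\gamma}$ does \emph{not} vanish as $y_0\to0$ when $\gamma=0$ (the case needed for Theorem~\ref{DV}). In that case the key claim $\sup_{g\in N_\delta(f)}\int_\Omega\Psi_n(g)\,dx\le\int_\Omega\widetilde F(f)\,dx+\epsilon(\delta)$ is simply false: for $g\in N_\delta(f)$ bounded away from $0$ on all of $\Omega$ (e.g.\ a convex combination of $f$ with $|\Omega|^{-1}\mathbf 1_\Omega$), one has $\int_\Omega\Psi_n(g)\,dx\to\widetilde F(1)|\Omega|$, which strictly exceeds $\int_\Omega\widetilde F(f)\,dx=\widetilde F(1)|\mathrm{supp}\,f|$ whenever $\mathrm{supp}\,f\subsetneq\Omega$, since $\widetilde F$ jumps at $0$. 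Your argument as written would therefore only deliver $-\mathcal E_\eta(\sqrt f,\sqrt f)-\widetilde F(1)|\Omega|$, which is weaker than the claimed infimum. The missing idea is to apply Theorem~\ref{DirichletLDP} on a shrinking open set $\Omega'$ with $\mathrm{supp}\,f\subset\Omega'\subset\Omega$ rather than on $\Omega$ itself: since $\mathbf 1_{\{\mathrm{supp}\,L_n^{(n)}\subset\Omega'\}}\le\mathbf 1_{\{\mathrm{supp}\,L_n^{(n)}\subset\Omega\}}$, this only decreases the left side, and letting $|\Omega'|\downarrow|\mathrm{supp}\,f|$ recovers the stated bound. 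For $\gamma\in(0,1]$ your plan does close, but only after coupling the smallness thresholds (the level $\epsilon_1$ defining the ``small set,'' the concavity parameter $y_0$, and the $L^1$-radius $\delta$) so that terms like $\widetilde F(1)y_0^{\gamma}|A|$ and $\widetilde F(1)y_0^{\gamma-1}(\epsilon_1|A|+\delta)$ vanish in the right order; as written the bound $0\le\widetilde F(y)\le\widetilde F(1)y$ you cite holds only for $y\ge1$ and is not the one doing the work.
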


\begin{proof} The proof is essentially the same as for 
\cite[Lemma 4.2]{Biskup2001}. Use the lower bound
in Theorem \ref{DirichletLDP} and Varadhan's lemma 
(see \cite[Theorems 2.2, 2.3]{Vb}).
\end{proof}

\begin{pro}
\label{UpperBound} Under the above hypotheses, we have upper bound%
\begin{eqnarray*}
&&\lim \sup_{n\rightarrow \infty }\frac{a_{n}}{n}\log E\left( \exp \left(
-\sum_{x\in\mathbb{Z}^{d}}F(l(n,x))\right) \right) \\
&\leq &-\sup_{\lambda >0}\inf_{f\in \mathcal{F}_{\mathbb{T}}}\left\{ \lambda
^{-1}\mathcal{E}_{\widetilde{\eta }}(\sqrt{f},\sqrt{f})+c_{0}\lambda
^{(1-\gamma )\tra E}\int_{\mathbb{T}}\widetilde{F}(f(x))dx\right\} .
\end{eqnarray*}
\end{pro}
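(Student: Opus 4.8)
The plan is to adapt the upper-bound argument of \cite{Biskup2001} to the operator-stable setting: fold the walk onto the torus $\mathbb{T}$ at the spatial scale $B_{\lfloor\lambda a_n\rfloor}$, apply the large deviation principle of Theorem \ref{TorusLDP} together with Varadhan's lemma to the folded occupation density, use the scaling assumption (S-$B_n$-$a_n$) and the regular variation of $a_n$ (Proposition \ref{Homogenous}) to identify the limiting functional, and finally optimize over the free parameter $\lambda>0$. Each $\lambda$ produces one upper bound, and the infimum over $\lambda$ of these bounds is exactly $-\sup_{\lambda>0}\inf_{f}\{\dots\}$, which accounts for the supremum in the statement.

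First I would fold. Write $m=\lfloor\lambda a_n\rfloor$. On the torus lattice $\mathcal{L}_\lambda^{(n)}=\pi(B_m^{-1}\mathbb{Z}^d)$, two sites $x_1,x_2\in\mathbb{Z}^d$ are identified precisely when $x_1-x_2\in B_m\mathbb{Z}^d$, so the folded occupation number $\widetilde{l}(n,y)$ at $y\in\mathcal{L}_\lambda^{(n)}$ is the sum of the numbers $l(n,x)$ over a full coset of $B_m\mathbb{Z}^d$. Subadditivity of $F$ (with $F(0)=0$) then gives $F(\widetilde{l}(n,y))\le\sum_{x\mapsto y}F(l(n,x))$, and summing over $y$ yields $\sum_y F(\widetilde{l}(n,y))\le\sum_{x\in\mathbb{Z}^d}F(l(n,x))$, hence $\mathbf{E}(e^{-\sum_x F(l(n,x))})\le\mathbf{E}(e^{-\sum_y F(\widetilde{l}(n,y))})$. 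Since the mollified density $\widetilde{f}_{n,\lambda}^{(n)}$ is piecewise constant on the fundamental cells of $\mathcal{L}_\lambda^{(n)}$ (each of volume $|\det B_m|^{-1}$) and equals there a fixed multiple $c_n\sim n\,|\det B_m|^{-1}$ of $\widetilde{l}(n,y)$, the folded sum becomes $\sum_y F(\widetilde{l}(n,y))=|\det B_m|\int_{\mathbb{T}}F(c_n\,\widetilde{f}_{n,\lambda}^{(n)}(x))\,dx$.

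Next comes the scaling step. Because $a_n$ is regularly varying of index $\kappa$, the index $m=\lfloor\lambda a_n\rfloor$ is asymptotically of the form $a_{n'}$ with $n'\sim\lambda^{1/\kappa}n$; applying (S-$B_n$-$a_n$) at index $n'$, together with the homogeneity $\widetilde{F}(y)=\widetilde{F}(1)y^\gamma$ of Proposition \ref{Homogenous}, the regular variation $|\det B_m|\simeq m^{\tra E}$, and the resulting identity $(1-\gamma)/\kappa-1=(1-\gamma)\tra E$, one finds that $|\det B_m|\,F(c_n\,\cdot\,)$ is asymptotic to $c_0\,\lambda^{(1-\gamma)\tra E}\,(n/a_n)\,\widetilde{F}(\cdot)$ on any compact range of values in $(0,\infty)$. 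Feeding this back, $\mathbf{E}(e^{-\sum_x F(l(n,x))})\le\mathbf{E}(\exp(-c_0\lambda^{(1-\gamma)\tra E}(n/a_n)\,\Psi_n(\widetilde{f}_{n,\lambda}^{(n)})))$ up to negligible errors, where $\Psi_n(f)\to\Psi(f):=\int_{\mathbb{T}}\widetilde{F}(f(x))\,dx$, a non-negative functional on $\mathcal{L}_1(\mathbb{T})$ that is lower semicontinuous (and continuous when $\gamma\in(0,1)$, since $t\mapsto t^\gamma$ is sublinear and $L^1$-convergent sequences are uniformly integrable). Theorem \ref{TorusLDP} provides the LDP for $\widetilde{f}_{n,\lambda}^{(n)}$ at speed $n/a_n$ with rate $\lambda^{-1}I_{L_{\widetilde{\eta}}}=\lambda^{-1}\mathcal{E}_{\widetilde{\eta}}(\sqrt{\cdot},\sqrt{\cdot})$, so Varadhan's lemma (upper half; the exponential moment condition is trivial since $\Psi\ge0$) gives $\limsup\frac{a_n}{n}\log\mathbf{E}(e^{-\sum_x F(l(n,x))})\le-\inf_{f}\{\lambda^{-1}\mathcal{E}_{\widetilde{\eta}}(\sqrt{f},\sqrt{f})+c_0\lambda^{(1-\gamma)\tra E}\int_{\mathbb{T}}\widetilde{F}(f)\}$, the infimum over $\mathcal{L}_1(\mathbb{T})$ reducing to the same infimum over $\mathcal{F}_{\mathbb{T}}$ by a standard mollification argument (here it matters that $\mathcal{F}_{\mathbb{T}}$ allows densities vanishing on a set, which is what is needed when $\gamma=0$). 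Since this holds for every $\lambda>0$, taking the infimum over $\lambda$ yields the claimed bound with $\sup_{\lambda>0}$.

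The step I expect to be the real obstacle is the scaling step: (S-$B_n$-$a_n$) only gives convergence of $\frac{a_n\det B_{a_n}}{n}F(\frac{n}{\det B_{a_n}}\,\cdot\,)$ uniformly on \emph{compact} subsets of $(0,\infty)$, whereas $\widetilde{f}_{n,\lambda}^{(n)}$ unavoidably takes values near $0$ and, a priori, arbitrarily large values. Showing that the regions where $\widetilde{f}_{n,\lambda}^{(n)}$ is very small or very large contribute negligibly — so that the folded sum is genuinely comparable to $(n/a_n)\int_{\mathbb{T}}\widetilde{F}(\widetilde{f}_{n,\lambda}^{(n)})$ — is where the structural hypotheses on $F$ enter: concavity with $F(0)=0$ gives $F(ty)\le tF(y)$ for $t\ge1$ and $F(ty)\ge tF(y)$ for $t\le1$, while subadditivity gives $F(ty)\le\lceil t\rceil F(y)$, and these squeeze $F$ between controlled sublinear and superlinear bounds that make the truncation errors vanish in the limit, as in \cite{Biskup2001}. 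This is also where the constant $c_0\le1$ in the statement originates; it can be taken arbitrarily close to $1$ by sharpening the truncation, which is what will be needed to make Proposition \ref{UpperBound} match Proposition \ref{LowerBound} in the proof of Theorem \ref{Asymptotic}.
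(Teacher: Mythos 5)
Your proposal follows essentially the same route as the paper: fold via subadditivity onto $\mathcal{L}_\lambda^{(n)}$, rewrite the folded sum as $\det(B_{\lfloor\lambda a_n\rfloor})\int_{\mathbb{T}}F\bigl(\tfrac{n}{\det(B_{\lfloor\lambda a_n\rfloor})}f\bigr)\,dx$, invoke the (lower-bound half of the) scaling assumption together with the regular variation of $\det B_{a_n}$ to produce the factor $\lambda^{(1-\gamma)\operatorname{tr}E}\widetilde F$, apply the $\mathcal{L}_1(\mathbb{T})$ LDP of Theorem~\ref{TorusLDP} plus Varadhan's lemma, and then optimize over $\lambda$. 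Your reparametrization $\lfloor\lambda a_n\rfloor\approx a_{n'}$ with $n'\sim\lambda^{1/\kappa}n$ is an equivalent way to extract the $\lambda$-dependence, and your identity $(1-\gamma)/\kappa-1=(1-\gamma)\operatorname{tr}E$ is consistent with the exponent formula in Proposition~\ref{Homogenous}. The concern you flag about the uniform-on-compacts convergence in (S-$B_n$-$a_n$) is a genuine technical point that the paper's write-up brushes over with a pointer to the Corollary of Theorem~6 in Donsker--Varadhan; your sketch of how concavity and subadditivity squeeze $F$ to control the small- and large-value truncation errors is the right idea. One small correction: the constant $c_0$ in the statement is not a truncation loss you need to engineer. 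The paper's own proof produces the bound with no $c_0$ at all (equivalently $c_0=1$), and since $c_0\le1$ only weakens the upper bound, the extra factor is immaterial; it appears to be carried over from the weak scaling assumptions (US-$B_n$-$a_n$) used later for the wreath-product applications rather than being intrinsic to this proposition.
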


\begin{proof}
First, since $F$ is sub-additive, write
\begin{eqnarray*}
\lefteqn{E\left( \exp \left( -\sum_{x\in 
\mathbb{Z}
^{d}}F(l(n,x))\right) \right)
\leq E\left( \exp \left( -\sum_{y\in \mathcal{L}_{\lambda }^{(n)}}F\left( 
\widetilde{l}(n,y)\right) \right) \right) }\hspace{.5in}&& \\
&=&E_{Q_{n,\lambda }^{(n)}}\left( \exp \left( -\det (B_{\lambda a_{n}})
\int_{\mathbb{T}}F\left( \frac{n}{\det (B_{\lambda a_{n}})}f(x)\right)
dx\right) \right) .
\end{eqnarray*}
Next, follow the line of reasoning used to prove the  Corollary of Theorem 6 
in \cite{Donsker1979}, using the large
deviation upper bound in $\mathcal{L}_1(\mathbb{T})$ and Varadhan's lemma.
From the (lower bound part of)  the  scaling assumption 
and the regular variation property of $%
\det B_{a_{n}},$ we have for any parameter $\lambda >0,$%
\begin{equation*}
\lim \inf_{n\rightarrow \infty }\frac{a_{n}\det (B_{\lambda a_{n}})}{n}%
F\left( \frac{n}{\det (B_{\lambda a_{n}})}y\right) \geq \lambda
^{(1-\gamma )\tra E}\widetilde{F}(y),\text{ }y>0.
\end{equation*}%
Setting  $\mathbf D_n=\det (B_{\lambda a_{n}})$, we obtain
\begin{eqnarray*}
\lefteqn{\lim \sup_{n\rightarrow \infty }\frac{a_{n}}{n}\log E\left( \exp \left(
-\sum_{x\in 
\mathbb{Z}
^{d}}F(l(n,x))\right) \right)}  && \\
&\leq &\lim \sup_{n\rightarrow \infty }\frac{a_{n}}{n}\log
E_{Q_{n,\lambda}^{(n)}}\left( \exp \left(- \mathbf D_n \int_{\mathbb{T}%
}F\left( \frac{n}{\mathbf D_n}f(x)\right) dx)\right)\right) \\
&= &\lim \sup_{n\rightarrow \infty }\frac{a_{n}}{n}\log
E_{Q_{n,\lambda}^{(n)}}\left( \exp \left( -\frac{n}{a_{n}} \int_{\mathbb{T}}%
\frac{a_{n}\mathbf D_n}{n}F\left( \frac{n}{\mathbf D_n}f(x)\right) dx\right) \right) \\
&\leq &-\inf_{f\in \mathcal{F}_{\mathbb{T}}}\left\{ \lambda ^{-1}\mathcal{E}%
_{\widetilde{\eta }}(\sqrt{f},\sqrt{f})+\lambda ^{(1-\gamma )\tra E}\int_{%
\mathbb{T}}\widetilde{F}(f(x))dx\right\} .
\end{eqnarray*}%
The last step comes from Varadhan's lemma. Since the choice of parameter $%
\lambda $ is arbitrary, we can optimize over all $\lambda >0.$
\end{proof}

The following lemma is proved in the appendix. It shows that the constants
appearing in the upper and lower bounds actually match up. In particular,
since this constant appears as both a $\sup $ and an $\inf $ of some
nonnegative quantities, it follows clearly that the constant $k(\eta,%
\widetilde{F})$ defined below takes value in $(0,\infty ).$

\begin{lem}
\label{Constant}Suppose $\widetilde{F}$ is a homogeneous function with
exponent $\gamma \in \lbrack 0,1],$ that is $\widetilde{F}(0)=0,\widetilde{F}%
(y)=\widetilde{F}(1)y^{\gamma }$ for $y>0;$ and $\eta $ is a full
operator-stable law with exponent $E.$ Then there exists a constant $k(\eta ,%
\widetilde{F})\in (0,\infty )$ such that 
\begin{eqnarray*}
k(\eta ,\widetilde{F}) &=&\sup_{\lambda >0}\inf_{f\in \mathcal{F}_{\mathbb{T}%
}}\left\{ \lambda ^{-1}\mathcal{E}_{\widetilde{\eta }}(\sqrt{f},\sqrt{f}%
)+\lambda ^{(1-\gamma )\tra E}\int_{\mathbb{T}}\widetilde{F}(f(x))dx\right\}
\\
&=&\inf_{\Omega \in \mathcal{G}}\inf_{f\in \mathcal{F}_{\Omega }}\left\{ 
\mathcal{E}_{\eta }(\sqrt{f},\sqrt{f})+\int_{\Omega }\widetilde{F}%
(f(x))dx\right\} .
\end{eqnarray*}
\end{lem}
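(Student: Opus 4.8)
The plan is to show that the two expressions
\[
k_1:=\sup_{\lambda>0}\inf_{f\in\mathcal F_{\mathbb T}}\Bigl\{\lambda^{-1}\mathcal E_{\widetilde\eta}(\sqrt f,\sqrt f)+\lambda^{(1-\gamma)\tra E}\!\int_{\mathbb T}\widetilde F(f)\,dx\Bigr\},\qquad
k_2:=\inf_{\Omega\in\mathcal G}\inf_{f\in\mathcal F_\Omega}\Bigl\{\mathcal E_\eta(\sqrt f,\sqrt f)+\int_\Omega\widetilde F(f)\,dx\Bigr\}
\]
are equal by exploiting the scaling covariance that is built into the operator-stable structure. The central identity is the following: if $f\in\mathcal F_\Omega$ for some bounded $\Omega$ and we set $f_s(x):=|\det s^E|\,f(s^E x)$ (so that $f_s\in\mathcal F_{s^{-E}\Omega}$ and $\|f_s\|_1=1$), then the change of variables $y=s^Ex$ together with $t^E(W)=tW$ for the L\'evy measure $W$ (recorded in the splitting discussion after the definition of operator-stable laws) gives
\[
\mathcal E_\eta(\sqrt{f_s},\sqrt{f_s})=s^{-1}\,\mathcal E_\eta(\sqrt f,\sqrt f),\qquad
\int\widetilde F(f_s)\,dx=|\det s^E|^{\,\gamma-1}\int\widetilde F(f)\,dx=s^{-(1-\gamma)\tra E}\int\widetilde F(f)\,dx,
\]
where the last equality uses $\widetilde F(c y)=c^\gamma\widetilde F(y)$ (Proposition \ref{Homogenous}) and $|\det s^E|=s^{\tra E}$. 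Thus the functional $f\mapsto\mathcal E_\eta(\sqrt f,\sqrt f)+\int\widetilde F(f)$ transforms, under $f\mapsto f_s$, exactly like $a\mapsto s^{-1}a+s^{-(1-\gamma)\tra E}b$, which is the dilation structure that generates the $\sup_\lambda$ in $k_1$.

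First I would prove $k_2\le k_1$. Given $\lambda>0$ and $\Omega\in\mathcal G$ and $f\in\mathcal F_\Omega$, rescale by a factor $s$ chosen so that $f_s$ becomes supported in a fundamental domain of the torus; periodizing $f_s$ then yields an element of $\mathcal F_{\mathbb T}$ whose Dirichlet energy against $\widetilde\eta$ equals $\mathcal E_\eta(\sqrt{f_s},\sqrt{f_s})$ (the torus semigroup $\widetilde\eta$ is the periodization of $\eta$, so energies of compactly-supported-mod-lattice functions agree), and whose $\widetilde F$-integral equals that of $f_s$. Tracking the $s$-powers and then matching $s$ to $\lambda$ via the relation above shows that the torus infimum is $\le$ the $\mathbb R^d$ quantity times the appropriate power of $\lambda$; taking $\inf$ over $\Omega,f$ and the right $\sup$ over $\lambda$ gives $k_2\le k_1$. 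Conversely, for $k_1\le k_2$, start from $f\in\mathcal F_{\mathbb T}$, lift it to a periodic function on $\mathbb R^d$, and for each $\lambda$ restrict–and–renormalize to a large cube $\Omega_R=(-R,R)^d$ (an element of $\mathcal G$); as $R\to\infty$ the truncation error in both the energy and the $\widetilde F$-integral tends to $0$, and after the scaling that turns $\lambda^{-1}$ and $\lambda^{(1-\gamma)\tra E}$ into the unnormalized $\mathcal E_\eta+\int\widetilde F$ form, one obtains $k_1\le k_2+o(1)$, hence $k_1\le k_2$. Finally $k(\eta,\widetilde F)\in(0,\infty)$: finiteness is clear because $\mathcal F_\Omega$ is nonempty and contains smooth compactly supported densities with finite energy; strict positivity follows because $k_2$ is an infimum of strictly positive quantities — if $\mathcal E_\eta(\sqrt f,\sqrt f)+\int_\Omega\widetilde F(f)=0$ then both terms vanish, forcing $f\equiv 0$ (using that $\widetilde F$ is nonnegative, not identically zero, and $\widetilde F(y)=\widetilde F(1)y^\gamma$ with $\widetilde F(1)>0$), contradicting $\|f\|_1=1$; a compactness/lower-semicontinuity argument (the energy is l.s.c.\ for the $L^1$ topology and $\int\widetilde F(f)$ is bounded below by a positive constant on $\{\|f\|_1=1\}$ once $\Omega$ is fixed, uniformly as $\Omega$ grows because of the scaling) upgrades this to a uniform positive lower bound.

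The main obstacle I anticipate is making the two-sided comparison genuinely rigorous at the level of the \emph{variational problems} rather than just the functionals: one must check that the periodization map $\mathcal F_{\Omega}\to\mathcal F_{\mathbb T}$ and the truncation map $\mathcal F_{\mathbb T}\to\mathcal F_{\Omega_R}$ interact correctly with the Dirichlet energies $\mathcal E_\eta$ and $\mathcal E_{\widetilde\eta}$ — in particular that the nonlocal (L\'evy) part of $\mathcal E_\eta$ does not pick up extra cross-terms under periodization, and that truncating a periodic function and renormalizing its $L^1$ norm costs only $o(1)$ in energy as $R\to\infty$. This is where the operator-stable scaling $t^E(W)=tW$ and the homogeneity of $\widetilde F$ must be used carefully together; once the scaling bookkeeping is set up correctly the equality of the two constants is essentially forced, exactly as in the classical Donsker–Varadhan and \cite{Biskup2001} arguments, of which this is the operator-stable analog.
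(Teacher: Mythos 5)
The high-level plan — exploit the operator-stable scaling covariance to transport competitors between the torus and Euclidean problems — is the right one and matches the paper. However, you have the two directions reversed. Starting from $f\in\mathcal F_\Omega$, rescaling so it fits in the fundamental domain, and periodizing produces a competitor for the torus infimum: this bounds $\inf_{\mathcal F_{\mathbb T}}$ from above and hence bounds $k_1=\sup_\lambda\inf_{\mathcal F_{\mathbb T}}\{\cdots\}$ from above by the Euclidean quantity, i.e.\ it is (at best) an argument toward $k_1\le k_2$, not $k_2\le k_1$ as you write. Conversely, lifting a torus density to $\mathbb R^d$ and truncating produces a competitor for $k_2$, which is the direction $k_2\le k_1$. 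The paper establishes only $k_2\le k_1$ analytically (the Appendix shows that for any $\epsilon>0$ one can exhibit $\Omega$ and $g\in\mathcal F_\Omega$ with $\mathcal E_\eta(\sqrt g,\sqrt g)+\int\widetilde F(g)\le k_1+\epsilon$), and obtains $k_1\le k_2$ as a byproduct of Propositions \ref{LowerBound} and \ref{UpperBound} — the probabilistic lower bound on $\Omega$ combined with the probabilistic upper bound forces $k_1\le\inf_{f\in\mathcal F_\Omega}\{\cdots\}$ for every $\Omega$. If you want to avoid that probabilistic route you would have to verify $\inf_{\mathcal F_{\mathbb T}}^{(\lambda)}\le k_2$ for \emph{all} $\lambda>0$, not just $\lambda$ large enough that a fixed $\Omega$ fits in $\lambda^E([0,1)^d)$, a point your sketch does not address.

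There are also two substantive analytic gaps. First, the claim that periodizing a function supported in a fundamental domain ``agrees'' in energy between $\mathcal E_\eta$ and $\mathcal E_{\widetilde\eta}$ is false for the nonlocal part: writing out the jump form, the Euclidean form with Dirichlet condition outside $[0,1)^d$ picks up $h(x)^2$ from jumps leaving the fundamental domain, while the torus form sees $(h(x+y-n)-h(x))^2$ after wrapping around, and these differ. (One does have an inequality, but you need to identify which direction it goes and confirm it is the useful one for your argument.) Second, the truncation $\mathcal F_{\mathbb T}\to\mathcal F_\Omega$ is not just ``restrict to a large cube $\Omega_R$ and let $R\to\infty$'': the paper first translates $f$ so that the mass near the boundary of the fundamental domain is $O(\lambda^{-1/4})$, then multiplies by a smooth cutoff with controlled gradient, and only then rescales by $\lambda^E$; the hypothesis that all eigenvalues of $E$ have real part $\ge 1/2$ is used precisely here to make the cutoff's contribution to the energy negligible. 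Without the translation step and the explicit cutoff construction, the ``truncation error tends to $0$'' assertion is not justified, especially for the nonlocal energy whose long-range part does not decouple under naive restriction.
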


\subsection{Proof of the large deviation principle in $L^1$}

\label{LDPproof}

In this section we indicate how to adapt \cite{Donsker1979} to prove the
large deviation principles as stated in Theorems \ref{TorusLDP} and \ref%
{DirichletLDP}. For this purpose we first develop a large deviation principle 
in the weak topology following Lemma 3.1 and Appendix A in \cite{GKZ}. 

Throughout this subsection we assume 
\begin{equation}
B_{n}^{-1}\mu ^{(n)}\Longrightarrow \eta .  \tag{C-$B_{n}$}
\end{equation}

First we establish asymptotics for  exponential moment generating functions.
Compare with \cite[Lemma A.1]{GKZ} which treats simple random walk on $%
\mathbb{Z}^d$.

\begin{pro}
\label{pro-moment} For the projected occupation measure, for any $f\in C(%
\mathbb{T})$ and any sequence $(a_{n})$ satisfying $a_{n}\rightarrow \infty $
and $a_{n}=o(n)\text{ as }n\rightarrow \infty$,  
\begin{eqnarray*}
\lefteqn{\lim_{n\rightarrow \infty }\frac{1}{n/a_{n}}\log E\left( \exp
\left( \frac{n}{a_{n}}<f,\widetilde{L}_{n}^{(n)}>\right) \right) } \\
&=&\sup_{g\in \mathcal{F}_{\mathbb{T}}}\left\{ \int_{\mathbb{T}%
}f(x)g(x)dx-\lambda ^{-1}\mathcal{E}_{\widetilde{\eta }}(\sqrt{g},\sqrt{g}%
)\right\} .
\end{eqnarray*}%
For the occupation measure with Dirichlet boundary condition, for any
function $f\in C_0(\Omega ),$%
\begin{eqnarray*}
\lefteqn{\lim_{n\rightarrow \infty }\frac{1}{n/a_{n}}\log E\left( \exp
\left( \frac{n}{a_{n}}<f,L_{n}^{(n)}>\right) \mathbf 1_{\{\mbox{%
\scriptsize\em supp}(L_n^{(n)})\subset\Omega\}} \right) } \\
&=&\sup_{g\in \mathcal{F}_{\Omega }}\left\{ \int_{\Omega }f(x)g(x)dx-%
\mathcal{E}_{\eta }(\sqrt{g},\sqrt{g})\right\} .
\end{eqnarray*}
\end{pro}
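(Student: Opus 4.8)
The plan is to follow the scheme of \cite[Lemma~3.1 and Appendix~A]{GKZ}, adjusting for the operator-stable scaling. I describe the argument for the torus version; the Dirichlet version is obtained in exactly the same way after replacing $B_{\lfloor\lambda a_{n}\rfloor}$ by $B_{a_{n}}$ (so that no factor $\lambda^{-1}$ survives) and restricting every operator below to the finite set $B_{a_{n}}^{-1}\mathbb{Z}^{d}\cap\Omega$ with killing outside $\Omega$, which is why the limiting form there is $\mathcal{E}_{\eta}$ on $\mathcal{F}_{\Omega}$.

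The first step is to reduce the exponential moment to a principal eigenvalue. Since $\langle f,\widetilde{L}_{n}^{(n)}\rangle=\frac{1}{n}\sum_{j=1}^{n}f(\widetilde{S}_{j}^{(n)})$ and, for each fixed $n$, $\widetilde{S}^{(n)}$ is a Markov chain on the \emph{finite} cocompact lattice $\mathcal{L}_{\lambda}^{(n)}\subset\mathbb{T}$ which is reversible with respect to counting measure (because $\mu$ is symmetric and $B_{\lfloor\lambda a_{n}\rfloor}$ has integer entries), one has
$$E\Big(\exp\big(\tfrac{n}{a_{n}}\langle f,\widetilde{L}_{n}^{(n)}\rangle\big)\Big)=\big\langle\delta_{0},(P_{n}M_{n})^{n}\mathbf{1}\big\rangle,$$
where $P_{n}$ is the transition operator of $\widetilde{S}^{(n)}$ and $M_{n}$ is multiplication by $e^{f/a_{n}}$. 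Conjugation by $M_{n}^{1/2}$ turns $P_{n}M_{n}$ into the self-adjoint nonnegative operator $T_{n}:=M_{n}^{1/2}P_{n}M_{n}^{1/2}$, and a standard Perron--Frobenius argument (using self-adjointness of $P_{n}$, positivity, and the polynomial bound $\#\mathcal{L}_{\lambda}^{(n)}=|\det B_{\lfloor\lambda a_{n}\rfloor}|$, whence $\log\#\mathcal{L}_{\lambda}^{(n)}=o(n/a_{n})$ by $a_{n}|\det B_{a_{n}}|\le n$ and $\tra(E)\ge d/2>0$) gives $\frac{a_{n}}{n}\log E(\cdots)=a_{n}\log\rho(T_{n})+o(1)$. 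Being self-adjoint and nonnegative, $T_{n}$ satisfies $\rho(T_{n})=\sup_{\|\phi\|_{2}=1}\langle e^{f/(2a_{n})}\phi,P_{n}(e^{f/(2a_{n})}\phi)\rangle$, the $L^{2}$-norm being that of the step function attached to $\phi$ on $\mathbb{T}$; using $e^{f/(2a_{n})}=1+\frac{f}{2a_{n}}+O(a_{n}^{-2})$ uniformly ($f$ bounded, continuous) and writing $g=\phi^{2}$, $\mathcal{E}_{n}(\psi,\psi)=\langle\psi,(I-P_{n})\psi\rangle$, this reduces to
$$a_{n}\log\rho(T_{n})=\sup\Big\{\langle f,g\rangle-a_{n}\,\mathcal{E}_{n}(\sqrt{g},\sqrt{g})\ :\ g\ \text{a probability density on}\ \mathcal{L}_{\lambda}^{(n)}\Big\}+o(1).$$

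The last step is the passage to the limit. By Theorem~\ref{theo-conv}(4), applied with $T_{m}=B_{m}^{-1}$ and $\mu_{m}=B_{m}^{-1}\mu$, we have $a_{n}\mathcal{E}_{n}(\phi,\phi)\to\lambda^{-1}\mathcal{E}_{\widetilde{\eta}}(\phi,\phi)$ for every $\phi\in\mathcal{U}_{\mathbb{T}}$. To upgrade this pointwise convergence of the rescaled generators to the identity claimed in the proposition, I would prove the two halves of a Mosco/$\Gamma$-convergence statement for the functionals $g\mapsto\langle f,g\rangle-a_{n}\mathcal{E}_{n}(\sqrt{g},\sqrt{g})$: (i) for a fixed smooth density $g=\phi^{2}$, construct a recovery sequence by discretizing $\phi$ on $\mathcal{L}_{\lambda}^{(n)}$ and combine the convergence of generators with a Riemann-sum argument to get $\langle f,\phi_{n}^{2}\rangle\to\langle f,g\rangle$ and $a_{n}\mathcal{E}_{n}(\phi_{n},\phi_{n})\to\lambda^{-1}\mathcal{E}_{\widetilde{\eta}}(\phi,\phi)$, yielding $\liminf\ge\sup_{g\in\mathcal{F}_{\mathbb{T}}}\{\langle f,g\rangle-\lambda^{-1}\mathcal{E}_{\widetilde{\eta}}(\sqrt{g},\sqrt{g})\}$; and (ii) an equicoercivity estimate plus a lower-semicontinuity (liminf) inequality, ensuring that near-maximizers $\phi_{n}^{\ast}$ of the formula of the second step are tight with $(\phi_{n}^{\ast})^{2}\to(\phi^{\ast})^{2}$ in $L^{1}(\mathbb{T})$ and $\liminf_{n}a_{n}\mathcal{E}_{n}(\phi_{n}^{\ast},\phi_{n}^{\ast})\ge\lambda^{-1}\mathcal{E}_{\widetilde{\eta}}(\phi^{\ast},\phi^{\ast})$, which supplies the matching $\limsup\le$. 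The Dirichlet version runs the same way, the confinement to a bounded $\Omega$ making (ii) routine and the recovery sequences in (i) chosen in $C_{c}^{\infty}(\Omega)$.

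I expect the main obstacle to be point (ii). Theorem~\ref{theo-conv} gives only \emph{pointwise} convergence of the rescaled generators, which by itself does not license interchanging the limit with the supremum; and in the operator-stable regime $\mathcal{E}_{\widetilde{\eta}}$ can be strongly anisotropic (its L\'evy measure may live on a finite union of lines, as in the examples of Section~2), so its coercivity is delicate. The remedy is to extract from the hypothesis ``$\operatorname{supp}M$ generates $\mathbb{R}^{d}$'' a bound, uniform in $n$, by which $a_{n}\mathcal{E}_{n}(\psi,\psi)$ controls a fixed $\mathcal{E}_{\eta}$-type seminorm of $\psi$; this simultaneously supplies the compactness that keeps near-maximizers from concentrating or shedding mass and the lower semicontinuity needed in (ii). The recovery-sequence step (i), by contrast, is routine; so it is this uniform coercivity input --- absent in the simple-random-walk treatment of \cite{GKZ} --- that carries the real content of the operator-stable adaptation.
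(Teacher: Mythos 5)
Your route is genuinely different from the paper's. You reduce the Feynman--Kac average to the principal eigenvalue $\rho(T_n)$ of the symmetrized discrete Schr\"odinger operator, express $a_n\log\rho(T_n)$ as a supremum of $\langle f,g\rangle - a_n\mathcal{E}_n(\sqrt g,\sqrt g)$ over discrete densities, and then propose passing to the limit by $\Gamma$-convergence (recovery sequence for $\liminf$, equicoercivity plus lower semicontinuity for $\limsup$). The paper instead splits the two inequalities at the outset: the lower bound comes from a Feynman--Kac estimate adapted from Chen's book, and the upper bound is obtained by first proving a large-deviation \emph{upper bound in the weak topology} for $\widetilde P^{(n)}_{n,\lambda}$ (respectively $P^{(n)}_{n,\Omega}$) --- adapting Donsker--Varadhan's Theorem~3 --- and then applying Varadhan's lemma. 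Working in the weak topology is precisely what lets the paper avoid the coercivity and $\Gamma$-liminf questions you flag: on $\mathcal M_1(\mathbb T)$ tightness is free and the rate functional is weakly lower semicontinuous by construction, so no uniform comparison of $a_n\mathcal E_n$ against a fixed continuum seminorm is needed at this stage.

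Two concrete points need attention in your version. First, your step~(ii) is left as a program: you correctly identify that pointwise convergence of the rescaled generators (Theorem~\ref{theo-conv}(4)) does not by itself allow you to interchange $\lim$ with $\sup$, but the proposed remedy --- a bound, uniform in $n$, by which $a_n\mathcal E_n$ controls a fixed $\mathcal E_\eta$-type seminorm --- requires comparing a quadratic form on lattice functions to one on $\mathbb R^d$, i.e.\ some interpolation/discretization machinery; this is exactly the role Griffin's local limit theorem plays elsewhere in the paper, and you do not invoke it. Second, your Perron--Frobenius reduction needs a \emph{lower} bound on the overlap $\langle\delta_0,\phi_n\rangle$ with the principal eigenvector, not just the polynomial upper bound on $\#\mathcal L^{(n)}_\lambda$; in the paper this uniform comparability of transition probabilities across $\mathcal L^{(n)}$ comes from the Harnack-type Lemma~\ref{lem-DVharnack}, again a consequence of the local limit theorem, and your sketch passes over it. Neither objection shows the strategy is unsalvageable, but as written the proposal's hardest step is deferred while the paper's route resolves it by a different, softer mechanism.
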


\begin{proof}

In the lower bound direction, we have the following Feynman-Kac estimates as consequences of functional
limit theorem. For the proof, adapt the arguments given in \cite[Theorem 7.1]{Chen2010}%
. For any sequence $(a_{n})$ satisfying $a_{n}\rightarrow \infty $
and $a_{n}=o(n)\text{ as }n\rightarrow \infty$, 
and any $f\in C(\mathbb{T}),$%
\begin{eqnarray*}
\lefteqn{\lim \inf_{n\rightarrow \infty }\frac{1}{n/a_{n}}\log E\left( \exp
\left( \frac{1}{a_{n}}\sum_{k=1}^{n}f(\widetilde{S}_{k}^{(n)})\right)
\right) } \\
&\geq &\sup_{g\in \mathcal{F}_{\mathbb{T}}}\left\{ \int_{\mathbb{T}%
}f(x)g(x)dx-\lambda ^{-1}\mathcal{E}_{\widetilde{\eta }}(\sqrt{g},\sqrt{g}%
)\right\} .
\end{eqnarray*}%
Similarly, for $f\in C_0(\Omega ),$ 
\begin{eqnarray*}
\lefteqn{\lim \inf_{n\rightarrow \infty }\frac{1}{n/a_{n}}\log E\left( \exp
\left( \frac{1}{a_{n}}\sum_{k=1}^{n}f(S_{k}^{(n)})\right) 
\mathbf 1_{\{\mbox{\scriptsize supp}(L_n^{(n)})\subset\Omega\}}
\right) } \\
&\geq &\sup_{g\in \mathcal{F}_{\Omega }}\left\{ \int_{\Omega }f(x)g(x)dx-%
\mathcal{E}_{\eta }(\sqrt{g},\sqrt{g})\right\} .
\end{eqnarray*}

In the upper bound direction, as a consequence of the convergence assumption, we can adapt the proof of \cite[Theorem 3]{Donsker1979} to have the following large deviation upper bound. 
Let $C$ be a closed of $\mathcal{M}_{1}(\mathbb{T})$. Then 
\begin{equation*}
\limsup_{n\rightarrow \infty }\frac{1}{n/{a_{n}}}\log \widetilde{P}%
_{n,\lambda }^{(n)}(C)\leq -\lambda ^{-1}\inf_{\nu \in C}I_{L_{\widetilde{%
\eta }}}(\nu ),
\end{equation*}%
where 
\begin{equation*}
I_{L_{\widetilde{\eta }}}(\nu )=-\inf_{u\in \mathcal{U}_{\mathbb{T}}}\int_{%
\mathbb{T}}\frac{L_{\widetilde{\eta }}u}{u}(x)d\nu (x).
\end{equation*}%
Similarly, if $C$ is compact in $\mathcal{M}_{1}(\overline{\Omega} ),$ 
\begin{equation*}
\limsup_{n\rightarrow \infty }\frac{1}{n/{a_{n}}}\log P_{n,\Omega
}^{(n)}(C)\leq -\inf_{\nu \in C}I_{L_{\eta }}(\nu ),
\end{equation*}%
where 
\begin{equation*}
I_{L_{\eta }}(\nu )=-\inf_{u\in \mathcal{U}_{\Omega }}\int_{\Omega }\frac{%
L_{\eta }u}{u}(x)d\nu (x).
\end{equation*}%
Either on $\mathbb T$ or in $\Omega$, apply Varadhan's lemma 
(\cite[Theorem 2.2]{Vb}) to the large deviation upper bound to obtain 
the upper bounds needed for Proposition \ref{pro-moment}.
\end{proof}

By the Gartner-Ellis theorem (e.g., \cite[Theorem 4.5.20]{Demb}), we obtain
the large deviation principle in the weak topology stated in the following
Theorem. Compare with \cite[Lemma 3.1]{GKZ}.

\begin{theo}
For any Borel set $B$ in $\mathcal{M}_{1}(\mathbb{T})$ and
any sequence $(a_{n})$ satisfying $a_{n}\rightarrow \infty $
and $a_{n}=o(n)\text{ as }n\rightarrow \infty$,  
\begin{eqnarray*}
-\lambda ^{-1}\inf_{f\in B^{\circ }}I_{L_{\widetilde{\eta }}}(f) &\leq
&\liminf_{n\rightarrow \infty }\frac{1}{n/{a_{n}}}\log \widetilde{P}%
_{n,\lambda }^{(n)}(B) \\
&\leq &\limsup_{n\rightarrow \infty }\frac{1}{n/{a_{n}}}\log \widetilde{P}%
_{n,\lambda }^{(n)}(B)\leq -\lambda ^{-1}\inf_{f\in \overline{B}}I_{L_{%
\widetilde{\eta }}}(f).
\end{eqnarray*}%
Similarly, for any Borel set $A$ in $\mathcal{M}_{1}(\overline{\Omega} ),$ 
\begin{eqnarray*}
-\inf_{f\in A^{\circ }}I_{L_{\eta }}(f) &\leq &\liminf_{n\rightarrow \infty }%
\frac{1}{n/{a_{n}}}\log P_{n,\Omega }^{(n)}(A) \\
&\leq &\limsup_{n\rightarrow \infty }\frac{1}{n/{a_{n}}}\log P_{n,\Omega
}^{(n)}(A)\leq -\inf_{f\in \overline{A}}I_{L_{\eta }}(\eta ).
\end{eqnarray*}
\end{theo}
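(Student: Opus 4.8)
The plan is to deduce this weak-topology large deviation principle from the logarithmic moment generating function asymptotics of Proposition \ref{pro-moment} by means of the abstract G\"artner--Ellis argument, exactly in the spirit of \cite[Lemma 3.1]{GKZ}. I will describe the argument on the torus; the Dirichlet version on $\overline{\Omega}$ is entirely parallel, with $\mathbb{T}$ replaced by $\overline{\Omega}$, the factor $\lambda^{-1}$ suppressed, and a few cosmetic adjustments noted at the end. The first step is to fix the functional-analytic setting. Since $\mathbb{T}$ is a compact metric space, $\mathcal{M}_1(\mathbb{T})$ with the weak topology is a compact metrizable convex subset of the dual of the separable Banach space $C(\mathbb{T})$; in particular the laws $\widetilde{P}_{n,\lambda}^{(n)}$ all live on a fixed compact set and the family is automatically exponentially tight at speed $n/a_n$. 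It therefore suffices to prove the lower bound on open sets together with the upper bound on compact sets, the full principle then following from exponential tightness (\cite[Lemma 1.2.18]{Demb}).

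The second step is to record the limiting cumulant functional. For $f\in C(\mathbb{T})$ set
\begin{equation*}
\Lambda(f):=\lim_{n\to\infty}\frac{a_n}{n}\log E\left(\exp\left(\frac{n}{a_n}\langle f,\widetilde{L}_n^{(n)}\rangle\right)\right).
\end{equation*}
By Proposition \ref{pro-moment} this limit exists, is finite for every $f$, and equals $\sup_{g\in \mathcal{F}_{\mathbb{T}}}\{\int_{\mathbb{T}}fg\,dx-\lambda^{-1}\mathcal{E}_{\widetilde{\eta}}(\sqrt{g},\sqrt{g})\}$. Thus $\Lambda$ is finite, convex and lower semicontinuous on $C(\mathbb{T})$, and I would then check that it is G\^ateaux differentiable: by the envelope theorem its derivative at $f$ is the maximizing density $g_f$, which is unique because $g_f=u_f^2$ with $u_f$ the normalized positive Perron eigenfunction of the self-adjoint operator $f+\lambda^{-1}L_{\widetilde{\eta}}$, uniqueness being a consequence of the positivity-improving property of the associated semigroup on the connected compact $\mathbb{T}$ (Perron--Frobenius), exactly as in \cite{Donsker1979}. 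With these properties in hand, the abstract G\"artner--Ellis argument of \cite{GKZ} --- finite-dimensional G\"artner--Ellis (\cite[Theorem 4.5.20]{Demb}) applied to the projections $\nu\mapsto(\langle f_1,\nu\rangle,\dots,\langle f_k,\nu\rangle)$, followed by a Dawson--G\"artner projective limit --- yields the full large deviation principle for $(\widetilde{P}_{n,\lambda}^{(n)})_n$ at speed $n/a_n$, with rate function the Legendre--Fenchel conjugate $\Lambda^{*}$ on $\mathcal{M}_1(\mathbb{T})$.

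The third step is to identify $\Lambda^{*}$. The displayed formula for $\Lambda$ says precisely that $\Lambda=(\lambda^{-1}I_{L_{\widetilde{\eta}}})^{*}$, once one uses the classical Donsker--Varadhan identity $I_{L_{\widetilde{\eta}}}(f\,dx)=\mathcal{E}_{\widetilde{\eta}}(\sqrt{f},\sqrt{f})$, the routine remark that the supremum over the smooth densities $g\in\mathcal{F}_{\mathbb{T}}$ may be taken over all of $\mathcal{M}_1(\mathbb{T})$, and the fact that $\Lambda^{*}\equiv+\infty$ off $\mathcal{M}_1(\mathbb{T})$ (harmless, since the $\widetilde{P}_{n,\lambda}^{(n)}$ are supported there). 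Since $\nu\mapsto I_{L_{\widetilde{\eta}}}(\nu)=-\inf_{u\in\mathcal{U}_{\mathbb{T}}}\int_{\mathbb{T}}(L_{\widetilde{\eta}}u/u)\,d\nu$ is a supremum of weakly continuous affine functionals, it is convex and weakly lower semicontinuous, so the Fenchel--Moreau biconjugation theorem gives $\Lambda^{*}=\lambda^{-1}I_{L_{\widetilde{\eta}}}$, the asserted rate. I note that the closed-set upper bound was in fact already established directly inside the proof of Proposition \ref{pro-moment}; the open-set lower bound, which G\"artner--Ellis supplies here, can alternatively be read off from the Feynman--Kac lower bounds there by the usual ground-state tilting argument.

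For the Dirichlet statement the only changes are: $\mathcal{M}_1(\overline{\Omega})$ is again weakly compact so exponential tightness is automatic; $(P_{n,\Omega}^{(n)})_n$ is a family of sub-probability measures, the missing mass being the exponentially small probability that $S^{(n)}_{\cdot}$ exits $\Omega$ before time $n$; and Proposition \ref{pro-moment} provides $\Lambda$ only for test functions $f\in C_0(\Omega)$, so one first obtains the principle in the topology generated by that class and then notes that on the compact $\overline{\Omega}$ it coincides with the weak topology (the rate $I_{L_{\eta}}$ with Dirichlet boundary conditions being in any case $+\infty$ on measures charging $\partial\Omega$). The step I expect to be the main obstacle is the essential-smoothness verification --- the uniqueness of the maximizing density $g_f$ that gives the G\^ateaux derivative of $\Lambda$ --- together with, in the Dirichlet case, the upgrade from the $C_0(\Omega)$-topology principle to the full weak topology; both are carried out as in \cite{Donsker1979} and \cite{GKZ}, and the operator-stable setting introduces nothing genuinely new once Proposition \ref{pro-moment} is in place.
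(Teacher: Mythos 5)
Your proposal is correct and follows essentially the same route as the paper, which dispatches the theorem in a single sentence by invoking the G\"artner--Ellis theorem (\cite[Theorem 4.5.20]{Demb}) and pointing to \cite[Lemma 3.1]{GKZ} once Proposition \ref{pro-moment} is available; your write-up simply fills in the exponential-tightness, essential-smoothness (Perron--Frobenius), Dawson--G\"artner, and Fenchel--Moreau steps that the paper leaves implicit.
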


\bigskip

Next, following \cite{Donsker1979}, we use the local limit theorem to
upgrade the large deviation principle in the weak topology to a result in
the strong $L^{1}$-topology. This is a rather technical task. As shown in 
\cite[Theorem 6]{Donsker1979}, the key point is to obtain a
super-exponential estimate on the $L^{1}$-distance of the density function
to its smooth mollification. This, in turn, requires uniform properties of
the transition probabilities that are provided by the local limit theorem.

Let $\{\psi _{\epsilon }\}$, $\epsilon \rightarrow 0$, be an approximation
of the identity on $%
\mathbb{R}
^{d}$ with $\psi _{\epsilon }$ is smooth, symmetric, compactly supported
inside $(-\frac{\epsilon }{2},\frac{\epsilon }{2})^{d}.$ Thinking of $\psi
_{\epsilon }$ also as a function on $\mathbb{T},$ set $K_{\epsilon }:L^{1}(%
\mathbb{T})\rightarrow L^{1}(\mathbb{T})$ as%
\begin{equation*}
K_{\epsilon }f(x)=\int_{\mathbb{T}}f(y)\psi _{\epsilon }(x-y)dy.
\end{equation*}

\begin{theo}
\label{superexponential}For every $\delta >0,$ $\lambda >0$ and
sequence $a_n$ tending to infinity such that  $a_{n}\left\vert \det
B_{a_{n}}\right\vert \leq n$, we have 
\begin{equation*}
\lim \sup_{\epsilon \rightarrow 0}\lim \sup_{n\rightarrow \infty }\frac{1}{%
n/a_{n}}\log Q_{n,\lambda }^{(n)}\left( f:\int_{\mathbb{T}}\left\vert
K_{\epsilon }f(x)-f(x)\right\vert dx\geq \delta \right) =-\infty .
\end{equation*}
\end{theo}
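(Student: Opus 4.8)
The plan is to follow the strategy of Donsker and Varadhan (see \cite[proof of Theorem~6]{Donsker1979}) adapted to the operator-stable setting, where the crucial new input is the local limit theorem of Theorem~\ref{LocalLimit}. First I would reduce the statement to an estimate on the random walk before mollification. Recall that $\widetilde f_{n,\lambda}^{(n)}$ is the density of $\widetilde L_n^{(n)}\ast\chi_n$, so $K_\epsilon \widetilde f_{n,\lambda}^{(n)}-\widetilde f_{n,\lambda}^{(n)}$ is the density of $\widetilde L_n^{(n)}\ast(\psi_\epsilon\ast\chi_n - \chi_n)$ where all convolutions are taken on $\mathbb{T}$. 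Writing $\phi_{n,\epsilon}=\psi_\epsilon\ast\chi_n-\chi_n$, the quantity to control is
\begin{equation*}
\int_{\mathbb{T}}\Bigl|\,\tfrac1n\sum_{k=1}^{n}\phi_{n,\epsilon}\bigl(x-\widetilde S_k^{(n)}\bigr)\Bigr|\,dx,
\end{equation*}
and the point is that $\phi_{n,\epsilon}$ has integral zero, is supported in a ball of radius $O(\epsilon)$ around the support of $\chi_n$, and has $L^1$-norm bounded by a small constant depending on $\epsilon$ (and on the regularity of $\psi_\epsilon$), uniformly in $n$. The heart of the matter is that $\|\chi_n\|_1=1$ while $\|\psi_\epsilon\ast\chi_n-\chi_n\|_1\to 0$ as $\epsilon\to 0$, uniformly in $n$, because $\chi_n$ is the normalized indicator of the fundamental cell $B_{\lfloor\lambda a_n\rfloor}^{-1}([-\tfrac12,\tfrac12)^d)$ and $\psi_\epsilon$ is an approximate identity at scale $\epsilon$ in the lattice $\mathcal{L}_\lambda^{(n)}$.

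Next, following the exponential Chebyshev/moment argument, I would bound
\begin{equation*}
Q_{n,\lambda}^{(n)}\Bigl(f:\int_{\mathbb{T}}|K_\epsilon f - f|\,dx\ge\delta\Bigr)
\le e^{-\theta\frac{n}{a_n}\delta}\;\mathbf E\Bigl(\exp\Bigl(\theta\tfrac{n}{a_n}\int_{\mathbb{T}}|K_\epsilon\widetilde f_{n,\lambda}^{(n)}-\widetilde f_{n,\lambda}^{(n)}|\,dx\Bigr)\Bigr)
\end{equation*}
for every $\theta>0$, and then estimate the exponential moment on the right. By the usual device of splitting $\mathbb{T}$ into finitely many pieces on which the sign of the integrand is constant, or equivalently by a duality argument writing $\int|g|=\sup_{\|h\|_\infty\le1}\int gh$ and discretizing $h$, it suffices to bound $\mathbf E(\exp(\theta\frac{n}{a_n}\langle h,\widetilde L_n^{(n)}\ast\phi_{n,\epsilon}\rangle))$ for bounded $h$, i.e. $\mathbf E(\exp(\frac{\theta}{a_n}\sum_{k=1}^n (h\ast\check\phi_{n,\epsilon})(\widetilde S_k^{(n)})))$. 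Here the key analytic estimate is a Khasminskii-type / spectral bound: for a mean-zero (integral-zero) perturbation at small scale $\epsilon$, the Feynman–Kac exponential moment is governed by the top of the spectrum of the generator perturbed by $\theta h\ast\check\phi_{n,\epsilon}$, and since $\|h\ast\check\phi_{n,\epsilon}\|_1$ is small uniformly in $n$ (of order $o_\epsilon(1)$) while the relevant Dirichlet form is fixed, this spectral contribution tends to $0$ as $\epsilon\to0$, uniformly in $n$ and in $h$ with $\|h\|_\infty\le1$. This is precisely where the local limit theorem (Theorem~\ref{LocalLimit}) enters: it provides the uniform bound $\sup_x|\det B_m|\,\mu^{(m)}(x)\le C$ needed to control, for every $m$, the $L^1\to L^\infty$ smoothing of the $m$-step transition kernel of $\widetilde S^{(n)}$, which in turn bounds the perturbed semigroup $\exp(\cdot)$ via an iteration/interpolation argument as in \cite[Theorem~6]{Donsker1979} (see also \cite[Section~3]{Chen2010} and \cite[Appendix A]{GKZ}).

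Finally, letting $n\to\infty$ for fixed $\epsilon$ and $\theta$ gives
\begin{equation*}
\limsup_{n\to\infty}\frac{a_n}{n}\log Q_{n,\lambda}^{(n)}\Bigl(f:\int_{\mathbb{T}}|K_\epsilon f-f|\,dx\ge\delta\Bigr)\le -\theta\delta + C(\epsilon,\theta),
\end{equation*}
with $C(\epsilon,\theta)\to0$ as $\epsilon\to 0$ for each fixed $\theta$; then letting $\epsilon\to0$ and finally $\theta\to\infty$ yields $-\infty$, as claimed. I expect the main obstacle to be the uniform (in $n$) control of the perturbed Feynman–Kac exponential moments: one must show that replacing the fixed-scale mollifier by the scaled mollifier $\chi_n$ adapted to the shrinking lattice $\mathcal{L}_\lambda^{(n)}$ does not destroy the uniformity, and this requires combining the local limit theorem with a careful scaling of Donsker–Varadhan's interpolation estimate between the $L^1$ and $L^\infty$ norms of the occupation measure. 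A secondary technical point is the passage from the vector-norm $\int|\cdot|$ on $\mathbb{T}$ to finitely many linear functionals uniformly in $n$, which is handled by a standard covering/entropy argument since the bound $C(\epsilon,\theta)$ does not depend on the particular test function $h$.
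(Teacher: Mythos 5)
Your overall framing---exponential Chebyshev, duality, a covering argument, and a Feynman--Kac / spectral estimate powered by the local limit theorem---is in the right spirit of Donsker--Varadhan and the paper does follow that outline. However, the central analytic claim you lean on is false, and this gap is not repairable without a substantially different argument, which is in fact what the paper supplies.

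You assert that $\|\psi_\epsilon\ast\chi_n-\chi_n\|_{L^1}\to 0$ as $\epsilon\to 0$, \emph{uniformly in} $n$, and you call this ``the heart of the matter.'' This cannot be true. The mollifier $\psi_\epsilon$ is fixed at scale $\epsilon$, whereas $\chi_n$ is the normalized indicator of the cell $B_{\lfloor\lambda a_n\rfloor}^{-1}([-\tfrac12,\tfrac12)^d)$, whose diameter tends to $0$ as $n\to\infty$. For fixed $\epsilon>0$ and $n$ large, the support of $\chi_n$ is negligibly small compared to $\epsilon$, the function $\psi_\epsilon\ast\chi_n$ is bounded by $\|\psi_\epsilon\|_\infty\lesssim\epsilon^{-d}$ while $\|\chi_n\|_\infty=|\det B_{\lfloor\lambda a_n\rfloor}|\to\infty$, and consequently $\|\psi_\epsilon\ast\chi_n-\chi_n\|_{L^1}\to 2$ (not $0$) as $n\to\infty$ for each fixed $\epsilon$. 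So the iterated limit you need is $\limsup_{\epsilon\to0}\limsup_{n\to\infty}\|\psi_\epsilon\ast\chi_n-\chi_n\|_1 = 2$. The reason the theorem is nonetheless true is \emph{not} that the kernel $\psi_\epsilon\ast\chi_n-\chi_n$ is small in $L^1$; it is that the occupation measure $\widetilde L_n^{(n)}$ of the random walk, when it has small rate function, is itself already approximately smooth at the scale set by $a_n$ steps of the walk, so convolving it with $K_\epsilon$ changes it little. Establishing this is the actual content of Lemmas \ref{lem-DVharnack} through 5.12: one compares the $\lfloor t a_n\rfloor$-step projected transition kernel to the limiting stable density $\widetilde g_{t/\lambda}$ (the local limit theorem controls the quantity $\Delta_t(n)$), and then uses continuity of that density to make $k_t(\epsilon)\to0$ as $\epsilon\to0$; the term $h(t\sigma)$ quantifies the cost of restricting to measures with small $\widetilde I_n$. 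None of this is captured by a naive $L^1$ bound on $\psi_\epsilon\ast\chi_n-\chi_n$.

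A second, less fatal but still real, issue: you describe the discretization of the test functions $h$ with $\|h\|_\infty\le1$ as a ``standard covering/entropy argument'' and a ``secondary technical point.'' In fact the relevant $\delta$-net has size $J\le(8/\delta+1)^{|\det B_{\lfloor\lambda a_n\rfloor}|}$, so $\log J$ grows like $|\det B_{\lfloor\lambda a_n\rfloor}|$, which by the standing hypothesis $a_n|\det B_{a_n}|\le n$ is comparable to $n/a_n$---the very LDP speed. The union bound thus eats into the exponent at exactly the rate you are trying to control, and the constants must be tracked; this is why the net size is computed explicitly in Lemma 5.8 and why the hypothesis $a_n|\det B_{a_n}|\le n$ appears in the statement. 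This is not a point one can wave away by invoking a generic entropy argument.
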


We follow step by step the proof of \cite[Theorem 5]{Donsker1979}. To this
end, we adapt to our situation the sequence of lemmas in \cite{Donsker1979}
that are used to prove this theorem. The first lemma provides an elementary
way to select a $\delta $-net of functions. Recall that 
\begin{equation*}
\chi _{n}(x)=|\det B_{\left\lfloor \lambda a_{n}\right\rfloor }|^{-1}\chi
_{B_{\left\lfloor \lambda a_{n}\right\rfloor }^{-1}\left( [-\frac{1}{2},%
\frac{1}{2})^{d}\right)} .
\end{equation*}

\begin{lem}
Let $M_{n,\epsilon }\subset C(\mathbb{T})$ be the set of functions 
\begin{equation*}
M_{n,\epsilon }=\{V=(K_{\epsilon }-I)\chi _{n}g:g\in C(\mathbb{T}%
),\left\Vert g\right\Vert _{\infty }\leq 1\}.
\end{equation*}%
For any $\delta >0,$ there exist functions $V_{1},...,V_{J}$ such that for
any $V\in M_{n,\epsilon },$%
\begin{equation*}
\inf_{1\leq i\leq J}\sup_{x\in \mathcal{L}^{(n)}}\left\vert
V(x)-V_{i}(x)\right\vert \leq \frac{\delta }{2},
\end{equation*}%
and 
\begin{equation*}
J=J(n,\epsilon ,\delta )\leq \left( \frac{8}{\delta }+1\right) ^{\left\vert
\det B_{\left\lfloor \lambda a_{n}\right\rfloor }\right\vert }.
\end{equation*}
\end{lem}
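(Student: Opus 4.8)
The plan is to reduce the statement to an elementary covering estimate for a cube in $\mathbb R^N$, where $N=|\det B_{\lfloor\lambda a_n\rfloor}|$. First I would record the two boundedness facts that drive everything. Here $\chi_n g$ is the convolution $\chi_n*g$ on the torus, $\chi_n$ is a nonnegative averaging kernel with $\|\chi_n\|_{L^1(\mathbb T)}\le 1$, and $\psi_\epsilon$ is a nonnegative probability density, so $K_\epsilon$ is a contraction on $L^\infty(\mathbb T)$. Writing $V_g:=(K_\epsilon-I)(\chi_n g)$ for the generic element of $M_{n,\epsilon}$, this gives, for every $g\in C(\mathbb T)$ with $\|g\|_\infty\le1$,
\[
\|V_g\|_\infty\le\|K_\epsilon(\chi_n g)\|_\infty+\|\chi_n g\|_\infty\le 2\|\chi_n g\|_\infty\le 2\|g\|_\infty\le 2 .
\]

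Next I would use that $\mathcal L^{(n)}=\mathcal L_\lambda^{(n)}=\pi\big(B_{\lfloor\lambda a_n\rfloor}^{-1}(\mathbb Z^d)\big)$ is a \emph{finite} subset of $\mathbb T$ of cardinality exactly $N=|\det B_{\lfloor\lambda a_n\rfloor}|$: since $B_{\lfloor\lambda a_n\rfloor}$ has integer entries, $B_{\lfloor\lambda a_n\rfloor}^{-1}(\mathbb Z^d)\supseteq\mathbb Z^d$ with index $|\det B_{\lfloor\lambda a_n\rfloor}|$ (this is the same observation already used to compute that $\mathbb T/\mathcal L_\lambda^{(n)}$ has volume $|\det B_{\lfloor\lambda a_n\rfloor}^{-1}|$). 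Hence the evaluation map $g\mapsto\big(V_g(y)\big)_{y\in\mathcal L^{(n)}}$ sends the unit ball of $C(\mathbb T)$ into the cube $[-2,2]^N\subset\mathbb R^N$.

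Then I would cover $[-2,2]^N$ by closed $\ell^\infty$-balls of radius $\delta/4$ centered at grid points: along each coordinate $[-2,2]$ is covered by $\lceil 8/\delta\rceil\le 8/\delta+1$ intervals of length $\delta/2$, so $[-2,2]^N$ is covered by at most $(8/\delta+1)^N$ such balls $\bar B_\infty(p_1,\delta/4),\dots,\bar B_\infty(p_M,\delta/4)$. Call $p_i$ \emph{active} if its ball contains $\big(V_g(y)\big)_{y\in\mathcal L^{(n)}}$ for at least one admissible $g$; for each active $p_i$ choose one such $g_i$ and set $V_i:=V_{g_i}\in M_{n,\epsilon}$. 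Enumerating these produces $V_1,\dots,V_J$ with $J\le M\le(8/\delta+1)^{|\det B_{\lfloor\lambda a_n\rfloor}|}$, and for an arbitrary $V=V_g\in M_{n,\epsilon}$ its coordinate vector lies in some $\bar B_\infty(p_i,\delta/4)$ with $p_i$ active, while that of $V_i$ lies in the same ball, so the $\ell^\infty$ triangle inequality yields $\sup_{x\in\mathcal L^{(n)}}|V(x)-V_i(x)|\le\delta/4+\delta/4=\delta/2$, which is the claim.

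There is no genuine obstacle here: the argument is the routine construction of a finite $\epsilon$-net, and the only points that require (elementary) care are the two $L^\infty$-contraction bounds and the identity $\#\mathcal L^{(n)}=|\det B_{\lfloor\lambda a_n\rfloor}|$, both immediate from the set-up of this subsection, in particular from the integrality of the matrices $B_m$. The slightly lossy constant $8/\delta$ (rather than $4/\delta$) is exactly the price of requiring the net functions $V_i$ to belong to $M_{n,\epsilon}$ themselves, which forces the two-step $\delta/4+\delta/4$ estimate above.
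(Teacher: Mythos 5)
Your proof is correct and reproduces the same elementary covering-net argument that the paper invokes by reference to \cite[Lemma 4.1]{Donsker1979}: identify $M_{n,\epsilon}$ restricted to the finite lattice $\mathcal{L}^{(n)}$ (of cardinality $|\det B_{\lfloor\lambda a_n\rfloor}|$) with a bounded cube in $\mathbb{R}^N$, cover by $\ell^\infty$-boxes of side $\delta/2$, and pick representatives from $M_{n,\epsilon}$ itself, which is the reason for the $\delta/4+\delta/4$ split and the constant $8/\delta$. Your observations about the $L^\infty$-contraction of $K_\epsilon$ and $\chi_n*$, and the count $\#\mathcal{L}^{(n)}=|\det B_{\lfloor\lambda a_n\rfloor}|$ coming from integrality of $B_m$, are exactly the inputs the cited lemma uses.
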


\begin{proof}
 The proof is identical to \cite[Lemma 4.1 ]{Donsker1979}.
\end{proof}

The second lemma is similar to \cite[Lemma 4.2]{Donsker1979} and concerns
the uniform control of the transition probabilities. Such uniform control
appears as Assumption (U) in \cite[Section 4.1]{Stroock} and \cite[Section
6.3]{Demb} to obtain the large deviation principle in $\mathcal{L}_1$.

\begin{lem}
\label{lem-DVharnack} There exists $n_{0}\in 
\mathbb{N}
$ and constant $c<\infty $ such that for all $n\geq n_{0},$ 
\begin{equation*}
\sup_{x\in \mathcal{L}^{(n)}}\widetilde{\mu }_{n}^{\ast a_{n}}(x)\leq
c\inf_{x\in \mathcal{L}^{(n)}}\widetilde{\mu }_{n}^{\ast a_{n}}(x).
\end{equation*}
\end{lem}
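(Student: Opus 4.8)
The plan is to deduce this uniform two-sided comparison from the local limit theorem, Theorem~\ref{LocalLimit}, in the same way \cite[Lemma~4.2]{Donsker1979} does in the classical stable case. Write $B'_n:=B_{\lfloor\lambda a_n\rfloor}$ and recall that $\widetilde\mu_{n}=\widetilde\mu_{n,\lambda}$ is the image on $\mathbb T$ of $B_n'^{-1}\mu$; since $B'_n\mathbb Z^d\subset\mathbb Z^d$, for $y\in\mathcal L^{(n)}$ with lift $\tilde y\in[-\tfrac12,\tfrac12)^d$ one has
\[
\widetilde\mu_n^{\,\ast a_n}(y)=\sum_{x\in\mathbb Z^d:\ \pi(B_n'^{-1}x)=y}\mu^{(a_n)}(x)=\sum_{z\in\mathbb Z^d}\mu^{(a_n)}\!\bigl(B'_n\tilde y+B'_nz\bigr).
\]
Thus $v_n^{-1}\widetilde\mu_n^{\,\ast a_n}$, where $v_n:=|\det B_n'^{-1}|$ is the covolume of $\mathcal L^{(n)}$, is a discretization on $\mathbb T$ of the wrapped density of $B_n'^{-1}S_{a_n}$, and the lemma will follow once we show this quantity converges, uniformly over $y\in\mathcal L^{(n)}$, to a function that is continuous and strictly positive on the compact torus $\mathbb T$.

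First I would pin down the limit. By (C-$B_n$), $B_{a_n}^{-1}S_{a_n}\Rightarrow\eta$. The regular-variation properties of $(a_n)$ and of the normalizing sequence (Proposition~\ref{Homogenous} and Theorem~\ref{RegularVariation}) keep $A_n:=B_{a_n}^{-1}B'_n$ in a fixed compact subset of $GL(\mathbb R^d)$ and give $A_n\to A$ for a fixed invertible matrix $A=A_\lambda$; hence $B_n'^{-1}S_{a_n}=A_n^{-1}(B_{a_n}^{-1}S_{a_n})\Rightarrow A^{-1}\eta$ on $\mathbb R^d$. The limit law has a smooth density (smoothness as noted after Theorem~\ref{LocalLimit}) which, because $\eta$ is adapted and symmetric, is strictly positive on all of $\mathbb R^d$; consequently its periodization $\rho$ on $\mathbb T$ is continuous and bounded above and below by positive constants. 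It then remains to upgrade the weak convergence $B_n'^{-1}S_{a_n}\Rightarrow A^{-1}\eta$ to the local statement
\[
\sup_{y\in\mathcal L^{(n)}}\bigl|\,v_n^{-1}\widetilde\mu_n^{\,\ast a_n}(y)-\rho(y)\,\bigr|\longrightarrow 0\qquad(n\to\infty).
\]

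To do this I would substitute the local limit theorem $\mu^{(a_n)}(x)=|\det B_{a_n}|^{-1}\bigl(g(B_{a_n}^{-1}x)+\varepsilon_n(x)\bigr)$, $\|\varepsilon_n\|_\infty\to0$, into the sum above. Using $|\det B'_n|/|\det B_{a_n}|=|\det A_n|\to|\det A|$ and $A_n\to A$, the main term $|\det A_n|\sum_z g\bigl(A_n(\tilde y+z)\bigr)$ is the value at $\tilde y$ of the periodization of $u\mapsto|\det A_n|g(A_nu)$, which converges to $\rho(y)$ uniformly in $y$ thanks to the continuity and the uniform integrable radial majorant of the operator-stable density $g$. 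The delicate point, and the step I expect to be the real obstacle, is the error contribution $|\det A_n|\sum_{z}\varepsilon_n\bigl(B'_n(\tilde y+z)\bigr)$: since this is a sum over infinitely many lattice points, $\|\varepsilon_n\|_\infty\to 0$ alone is not enough. As in \cite{Donsker1979}, I would control it by supplementing Theorem~\ref{LocalLimit} with a uniform integrable majorant $|\det B_{a_n}|\,\mu^{(a_n)}(x)\le\Psi(B_{a_n}^{-1}x)$ with $\Psi\in L^1(\mathbb R^d)$ (available from the Fourier estimates underlying \cite{Griffin1986} together with the regularly varying tail of $\mu$ coming from the domain-of-attraction hypothesis). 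Splitting the $z$-sum at $|A_n(\tilde y+z)|\le M$: on the near part there are only $O(M^d)$ indices, so the error is $O(M^d\|\varepsilon_n\|_\infty)\to0$ for fixed $M$; on the far part the whole contribution is dominated by $|\det A_n|\sum_{|A_n(\tilde y+z)|>M}\Psi(A_n(\tilde y+z))\lesssim\int_{|u|>cM}\Psi$, made small by taking $M$ large (and likewise for the far tail of $\rho$). Letting first $n\to\infty$ and then $M\to\infty$ yields the displayed uniform limit.

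Finally, once the uniform convergence to $\rho$ is established the lemma is immediate: for all $n$ beyond some $n_0$ and all $y\in\mathcal L^{(n)}$ one has $\tfrac12\inf_{\mathbb T}\rho\le v_n^{-1}\widetilde\mu_n^{\,\ast a_n}(y)\le 2\sup_{\mathbb T}\rho$, hence $\sup_x\widetilde\mu_n^{\,\ast a_n}(x)\le c\,\inf_x\widetilde\mu_n^{\,\ast a_n}(x)$ with $c=4\sup_{\mathbb T}\rho/\inf_{\mathbb T}\rho<\infty$, uniformly in $n\ge n_0$. It is the compactness of the torus that makes $c$ independent of $n$, and the argument does not depend on the particular value of $\lambda>0$.
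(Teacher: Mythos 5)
Your proposal starts from the same ingredients the paper uses — Theorem~\ref{LocalLimit} (Griffin's local limit theorem) together with the regular variation of the normalizing sequence, then continuity and strict positivity of the wrapped limiting density on the compact torus — and, rightly, identifies the delicate step that the paper compresses into the phrase ``projecting onto $\mathbb{T}$'': passing from the pointwise sup bound on $\mathbb{Z}^{d}$ to the periodized statement on $\mathbb{T}$ is not automatic, since the wrapped sum runs over infinitely many lattice points and $\|\varepsilon_{n}\|_{\infty}\to 0$ by itself gives no control over the far part of the sum. That diagnosis is correct, and your near/far splitting is the natural thing to try.

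The gap is in the far-tail control. The whole argument hinges on the existence of a uniform integrable majorant $|\det B_{a_{n}}|\,\mu^{(a_{n})}(x)\le\Psi(B_{a_{n}}^{-1}x)$ with $\Psi\in L^{1}(\mathbb{R}^{d})$, which you assert is ``available from the Fourier estimates underlying \cite{Griffin1986} together with the regularly varying tail of $\mu$.'' In the setting of this paper this is not free: Theorem~\ref{LocalLimit} only gives a vanishing $\sup$-norm error, (C-$B_{n}$) allows a general strict domain of operator-attraction (not normal attraction), and $\eta$ may have a Gaussian component, in which directions $\mu$ has no heavy, regularly varying tail to exploit and the limit density decays faster than any polynomial while $\mu^{(n)}$ need not. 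Such pointwise dominations are standard in the one-parameter stable case used by Donsker--Varadhan, but in the operator-stable setting they would require a separate proof, and none is offered here. The paper's own (very terse) proof sidesteps this entirely by running the Fourier-analytic argument of \cite{Griffin1986} directly over the $|\det B_{\lfloor\lambda a_{n}\rfloor}|$ characters of the finite group $\mathcal{L}^{(n)}$, which produces the wrapped local limit theorem without any physical-space majorant. Your physical-space route is a legitimate alternative in principle, but as written it rests on an unproved estimate, and that is where the argument is incomplete.
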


\begin{proof}
Recall the local limit theorem in \cite{Griffin1986},%
\begin{equation*}
\lim \sup_{n\rightarrow \infty }\sup_{x\in 
\mathbb{Z}
^{d}}\left\vert \det B_{n}\right\vert \cdot \left\vert \mu
^{(n)}(x)-\left\vert \det B_{n}^{-1}\right\vert g(B_{n}^{-1}x)\right\vert =0.
\end{equation*}%
Applying  this local limit result along the sequence $\{\left\lfloor \lambda
a_{n}\right\rfloor \}$ and projecting onto $\mathbb{T}$, we have%
\begin{equation*}
\lim \sup_{n\rightarrow \infty }\sup_{y\in \mathcal{L}^{(n)}}\left\vert
\left\vert \det B_{a_{n}}\right\vert \widetilde{\mu }_{n}^{\ast a_{n}}(y)-%
\widetilde{g}_{\lambda ^{-1}}(y)\right\vert =0.
\end{equation*}
Since the density $g$ is continuous, the desired result follows. 
\end{proof}

We have the following uniform estimate with respect to the starting point.

\begin{lem}
\label{lem1} Let $x$ and $y$ be any two points in $\mathcal{L}_{\lambda
}^{(n)}.$ There exists an integer $n_{0}$ such that for any $n\geq n_{0}$
and any $\theta >0,$%
\begin{equation*}
E_{y}\left[ \exp \left( \theta \sum_{k=1}^{n}V(\widetilde{S}%
_{k}^{(n)})\right) \right] \leq C_{n,\theta }E_{x}\left[ \exp \left( \theta
\sum_{k=1}^{n}V(\widetilde{S}_{k}^{(n)})\right) \right] ,
\end{equation*}%
where $C_{n,\theta }=c\exp (4\theta a_{n})$ and $c$ and $n_{0}$ are as in 
\emph{Lemma 4.2}.
\end{lem}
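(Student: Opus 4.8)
The plan is to compare the two expectations by prepending to the walk started at $x$ a ``burn-in'' period of length $a_{n}$, after which — by the Harnack-type estimate of Lemma~\ref{lem-DVharnack} — the distribution of the walk on the (finite) torus lattice $\mathcal{L}^{(n)}_{\lambda}$ is comparable, up to the constant $c$, to the distribution of the walk started at $y$. Fix $\lambda>0$ and $n\ge n_{0}$, and set, for $z\in\mathcal{L}^{(n)}_{\lambda}$,
\begin{equation*}
\phi(z):=E_{z}\Bigl[\exp\Bigl(\theta\sum_{k=1}^{n}V(\widetilde{S}^{(n)}_{k})\Bigr)\Bigr],
\end{equation*}
which is finite and strictly positive since $V$ is bounded and the fundamental domain contains only finitely many lattice points. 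Applying the Markov property at time $a_{n}$ gives, for every $z$,
\begin{equation*}
E_{z}\Bigl[\exp\Bigl(\theta\sum_{k=a_{n}+1}^{a_{n}+n}V(\widetilde{S}^{(n)}_{k})\Bigr)\Bigr]=\sum_{w\in\mathcal{L}^{(n)}_{\lambda}}\widetilde{\mu}_{n}^{\ast a_{n}}(w-z)\,\phi(w).
\end{equation*}

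First I would invoke Lemma~\ref{lem-DVharnack}: for $n\ge n_{0}$ one has $\widetilde{\mu}_{n}^{\ast a_{n}}(p)\le c\,\widetilde{\mu}_{n}^{\ast a_{n}}(q)$ for all lattice points $p,q$, hence $\widetilde{\mu}_{n}^{\ast a_{n}}(w-y)\le c\,\widetilde{\mu}_{n}^{\ast a_{n}}(w-x)$ for every $w$; since $\phi\ge 0$, summing this inequality against $\phi(w)$ yields
\begin{equation*}
E_{y}\Bigl[\exp\Bigl(\theta\sum_{k=a_{n}+1}^{a_{n}+n}V(\widetilde{S}^{(n)}_{k})\Bigr)\Bigr]\le c\,E_{x}\Bigl[\exp\Bigl(\theta\sum_{k=a_{n}+1}^{a_{n}+n}V(\widetilde{S}^{(n)}_{k})\Bigr)\Bigr].
\end{equation*}
Next I would remove the time shift. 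Since $\|V\|_{\infty}\le 2\|\chi_{n}\|_{\infty}=2|\det B_{\lfloor\lambda a_{n}\rfloor}|^{-1}\to 0$, we may assume (enlarging $n_{0}$) that $\|V\|_{\infty}\le 1$, and the index sets $\{a_{n}+1,\dots,a_{n}+n\}$ and $\{1,\dots,n\}$ differ in exactly $2a_{n}$ positions, so along any path,
\begin{equation*}
\Bigl|\sum_{k=a_{n}+1}^{a_{n}+n}V(\widetilde{S}^{(n)}_{k})-\sum_{k=1}^{n}V(\widetilde{S}^{(n)}_{k})\Bigr|\le 2a_{n}\|V\|_{\infty}\le 2a_{n}.
\end{equation*}
Taking expectations of the resulting pointwise bounds $e^{\pm\theta(\cdots)}$ shows that the left-hand side of the last display is at least $e^{-2\theta a_{n}}\phi(y)$ and the right-hand side is at most $c\,e^{2\theta a_{n}}\phi(x)$; chaining these gives $\phi(y)\le c\,e^{4\theta a_{n}}\phi(x)=C_{n,\theta}\,\phi(x)$, which is the assertion (and exchanging $x$ and $y$ gives the symmetric inequality).

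I do not expect a serious obstacle here: all the analytic content — the near-uniformity of $\widetilde{\mu}_{n}^{\ast a_{n}}$ on the torus lattice, coming from the local limit theorem (Theorem~\ref{LocalLimit}) — is already packaged into Lemma~\ref{lem-DVharnack}. The only points needing care are purely bookkeeping: getting the $2a_{n}$-term discrepancy between the two index ranges right so that the exponent is precisely $4\theta a_{n}$, and checking $\|V\|_{\infty}\le 1$ for large $n$ so the constant can be stated cleanly as $c\,e^{4\theta a_{n}}$. One should also keep $\lambda$ fixed throughout and remember that $\widetilde{\mu}_{n}$, $\widetilde{S}^{(n)}_{k}$ and $\mathcal{L}^{(n)}$ carry a suppressed dependence on $\lambda$, so that the $c$ and $n_{0}$ obtained are exactly the $\lambda$-dependent constants furnished by Lemma~\ref{lem-DVharnack}.
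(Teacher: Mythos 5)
Your overall plan — insert an $a_n$-step burn-in, compare the resulting transition kernels via the Harnack estimate of Lemma~\ref{lem-DVharnack}, and then strip the burn-in off using the boundedness of $V$ — is exactly the Donsker--Varadhan route that the paper is citing, and every step up through the application of Lemma~\ref{lem-DVharnack} is sound. The gap is in the last bookkeeping step and, more seriously, in the claimed control of $\|V\|_{\infty}$.

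You assert $\|V\|_{\infty}\le 2\|\chi_{n}\|_{\infty}=2|\det B_{\lfloor\lambda a_{n}\rfloor}|^{-1}\to 0$ so that $\|V\|_{\infty}\le 1$ eventually. That is not correct. The functions $V$ at hand are $(K_{\epsilon}-I)(\chi_{n}*g)$ with $\|g\|_{\infty}\le 1$ and $\chi_{n}$ normalized to be a probability density (the $|\det B_{\lfloor\lambda a_n\rfloor}|^{-1}$ in the paper's display for $\chi_n$ must be $|\det B_{\lfloor\lambda a_n\rfloor}|$; otherwise $\int\chi_n\ne 1$ and $\widetilde L^{n}_{k}=\widetilde L^{(n)}_{k}*\chi_n$ would not be a probability measure). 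So $\|\chi_{n}*g\|_{\infty}\le\|\chi_{n}\|_{1}\|g\|_{\infty}=1$ and hence $\|V\|_{\infty}\le 2$ — a bound that does not improve as $n\to\infty$. With $\|V\|_{\infty}\le 2$, your two-sided shift by $2a_n$ terms produces a discrepancy of $4a_{n}\|V\|_{\infty}$ and therefore $C_{n,\theta}=c\,e^{8\theta a_{n}}$, off by a factor of $2$ in the exponent from what the lemma asserts.

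The fix is the slightly more economical time shift used by Donsker--Varadhan: compare $\sum_{1}^{n}$ with $\sum_{a_{n}+1}^{n}$ (which differ in only $a_{n}$ positions) rather than with $\sum_{a_{n}+1}^{a_{n}+n}$ (which differ in $2a_{n}$). Writing $\psi_{m}(w):=E_{w}\bigl[\exp(\theta\sum_{1}^{m}V)\bigr]$ and applying the Markov property at time $a_{n}$ one gets
\begin{equation*}
\psi_{n}(y)\le e^{\theta a_{n}\|V\|_{\infty}}\sum_{w}\widetilde{\mu}_{n}^{*a_{n}}(w-y)\,\psi_{n-a_{n}}(w)
\le c\,e^{\theta a_{n}\|V\|_{\infty}}\sum_{w}\widetilde{\mu}_{n}^{*a_{n}}(w-x)\,\psi_{n-a_{n}}(w)
\le c\,e^{2\theta a_{n}\|V\|_{\infty}}\psi_{n}(x),
\end{equation*}
where the first and last inequalities each cost only $\theta a_{n}\|V\|_{\infty}$ in the exponent. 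With $\|V\|_{\infty}\le 2$ this is precisely $C_{n,\theta}=c\,e^{4\theta a_{n}}$. Note that the weaker $c\,e^{8\theta a_{n}}$ would still suffice for Theorem~\ref{superexponential}, since there $\theta$ is taken of order $1/a_{n}$; but to prove the lemma as stated you need the one-sided shift.
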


\begin{proof}
This follows from Lemma \ref{lem-DVharnack} as in the proof of 
\cite[Lemma 4.3]{Donsker1979}.
\end{proof}

\begin{lem}
\bigskip There exists an integer $n_{0}$ such that for any $n\geq n_{0}$ and
any $\theta >0,$%
\begin{equation*}
E_{0}\left[ \exp \left( \theta \sum_{k=1}^{n}V(\widetilde{S}%
_{k}^{(n)})\right) \right] \leq C_{n,\theta }\exp \left( n\widetilde{I}%
_{n}^{\ast }(\theta V)\right) ,
\end{equation*}%
where $C_{n,\theta }$ is as in \emph{Lemma \ref{lem1}} and $\widetilde{I}%
_{n}^{\ast }$ is Legendre transform of 
\begin{equation*}
\widetilde{I}_{n}(\eta )=-\inf_{u\in \mathcal{U}_{\mathbb{T}}}\int_{_{%
\mathbb{T}}}\log \frac{\widetilde{\mu }_{n}u}{u}d\eta ,
\end{equation*}%
that is, 
\begin{equation*}
\widetilde{I}_{n}^{\ast }(\theta V)=\sup_{\eta \in \mathcal{M}_{1}(\mathbb{T}%
)}\left\{ \int_{_{\mathbb{T}}}\theta Vd\eta -\widetilde{I}_{n}(\eta
)\right\} .
\end{equation*}
\end{lem}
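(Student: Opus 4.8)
The plan is to realise the left-hand side as $(P_{\theta V}^{\,n}\mathbf{1})(0)$ for a Feynman--Kac transfer operator $P_{\theta V}$ on the finite lattice $\mathcal{L}_{\lambda}^{(n)}$, to compare this with the $n$-th power of the Perron eigenvalue of $P_{\theta V}$ up to the multiplicative error $C_{n,\theta}$ provided by Lemma \ref{lem1}, and finally to identify the logarithm of that eigenvalue with $\widetilde{I}_{n}^{\ast}(\theta V)$.

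First I would set $w_{n}(x):=E_{x}\bigl[\exp(\theta\sum_{k=1}^{n}V(\widetilde{S}_{k}^{(n)}))\bigr]$ for $x\in\mathcal{L}_{\lambda}^{(n)}$ and, by conditioning on the first step of the walk and iterating, observe that $w_{n}=P_{\theta V}^{\,n}\mathbf{1}$, where $P_{\theta V}$ is the operator $u\mapsto\widetilde{\mu}_{n}\ast(e^{\theta V}u)$. Since $\mathcal{L}_{\lambda}^{(n)}$ is a finite set, $P_{\theta V}$ is a nonnegative matrix, so by the Perron--Frobenius theorem its spectral radius $\lambda_{n}(\theta V)>0$ is an eigenvalue admitting a nonnegative left eigenvector, which I normalise to a probability vector $\psi_{n}$ on $\mathcal{L}_{\lambda}^{(n)}$. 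Pairing against $\psi_{n}$ then gives the exact identity $\langle\psi_{n},w_{n}\rangle=\langle\psi_{n},P_{\theta V}^{\,n}\mathbf{1}\rangle=\lambda_{n}(\theta V)^{n}$.

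Next I would apply Lemma \ref{lem1} with starting points $0$ and $x$, which gives $w_{n}(0)\le C_{n,\theta}\,w_{n}(x)$ for every $x\in\mathcal{L}_{\lambda}^{(n)}$; averaging this inequality against $\psi_{n}$ and using $\sum_{x}\psi_{n}(x)=1$ yields
\[
w_{n}(0)\ \le\ C_{n,\theta}\,\langle\psi_{n},w_{n}\rangle\ =\ C_{n,\theta}\,\lambda_{n}(\theta V)^{n}.
\]
It remains to identify $\log\lambda_{n}(\theta V)$ with $\widetilde{I}_{n}^{\ast}(\theta V)$. Conjugating $P_{\theta V}$ by the diagonal operator $\mathrm{diag}(e^{\theta V})$ shows it is similar to $u\mapsto e^{\theta V}(\widetilde{\mu}_{n}\ast u)$, so by the Collatz--Wielandt formula $\log\lambda_{n}(\theta V)=\inf_{u>0}\sup_{x}\bigl[\theta V(x)+\log\tfrac{(\widetilde{\mu}_{n}\ast u)(x)}{u(x)}\bigr]$, the infimum being over positive functions on $\mathcal{L}_{\lambda}^{(n)}$ and hence, by smooth positive extension to the torus, over $u\in\mathcal{U}_{\mathbb{T}}$. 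Rewriting the inner supremum as a supremum over $\eta\in\mathcal{M}_{1}(\mathbb{T})$ and exchanging the two extrema---this is the Donsker--Varadhan variational formula for the top eigenvalue of a tilted finite-state Markov operator---gives $\log\lambda_{n}(\theta V)=\sup_{\eta}\{\int_{\mathbb{T}}\theta V\,d\eta-\widetilde{I}_{n}(\eta)\}=\widetilde{I}_{n}^{\ast}(\theta V)$. Combining this with the displayed inequality proves the lemma.

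The operator identity and the Collatz--Wielandt bound are routine; the one genuinely substantial ingredient is Lemma \ref{lem1}, which rests on the uniform Harnack estimate of Lemma \ref{lem-DVharnack} and ultimately on the local limit theorem of \cite{Griffin1986}. The point requiring the most care is the last step: justifying the minimax exchange in the variational formula for $\lambda_{n}(\theta V)$---equivalently, proving both inequalities between $\log\lambda_{n}(\theta V)$ and the Legendre transform $\widetilde{I}_{n}^{\ast}(\theta V)$ directly, using the Perron eigenfunction for one direction and the left eigenmeasure for the other---while keeping the multiplicative constant exactly equal to the $C_{n,\theta}$ of Lemma \ref{lem1}.
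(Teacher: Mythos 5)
Your argument is structurally sound and, up to the last step, essentially reconstructs the argument of Donsker--Varadhan's Lemma~4.4, which is exactly what the paper's one-line proof invokes. The identity $w_n=P_{\theta V}^n\mathbf 1$, the pairing with a left Perron probability eigenvector $\psi_n$, and the use of Lemma~\ref{lem1} to absorb the $n$-independent direction of the Harnack comparison into $C_{n,\theta}$ are all correct, and together they yield $w_n(0)\le C_{n,\theta}\,\lambda_n(\theta V)^n$ cleanly.

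The one place where the write-up is thinner than it should be is the identification $\log\lambda_n(\theta V)=\widetilde I_n^\ast(\theta V)$. Two remarks. First, you only need the inequality $\log\lambda_n(\theta V)\le\widetilde I_n^\ast(\theta V)$, and the equality you assert can actually fail: the supremum defining $\widetilde I_n^\ast$ runs over all of $\mathcal M_1(\mathbb T)$, including measures supported on cosets $x_0+\mathcal L^{(n)}_\lambda$ other than the lattice containing the walk, and on those cosets the operator $e^{\theta V}\widetilde\mu_n\ast$ may have a strictly larger Perron root since $V$ is a genuine function on $\mathbb T$. So you should claim (and only need) an inequality. Second, the exchange of extrema is the hard direction of minimax here: with $\phi=\log u$, the map $(\eta,\phi)\mapsto\int\bigl[\theta V+\log\frac{\widetilde\mu_n e^\phi}{e^\phi}\bigr]d\eta$ is affine in $\eta$ and convex in $\phi$ (the convolution term is a log-sum-exp), and $\mathcal M_1(\mathcal L^{(n)}_\lambda)$ is compact and convex, so Sion's minimax theorem gives
\[
\log\lambda_n(\theta V)=\inf_{u>0}\max_{x\in\mathcal L^{(n)}_\lambda}\Bigl[\theta V+\log\tfrac{\widetilde\mu_n u}{u}\Bigr]=\sup_{\eta\in\mathcal M_1(\mathcal L^{(n)}_\lambda)}\inf_{u>0}\int\Bigl[\theta V+\log\tfrac{\widetilde\mu_n u}{u}\Bigr]d\eta\le\widetilde I_n^\ast(\theta V),
\]
the last inequality because enlarging the class of $\eta$ to $\mathcal M_1(\mathbb T)$ can only increase the supremum, while for $\eta$ carried by the lattice the infimum over $u\in\mathcal U_{\mathbb T}$ coincides with the infimum over positive functions on the lattice (only lattice values of $u$ enter $\widetilde\mu_n u$ on the lattice). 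With this paragraph added, your proof is complete and matches the approach the paper takes by reference to \cite[Lemma 4.4]{Donsker1979}.
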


\begin{proof}
 Follow \cite[Lemma 4.4]{Donsker1979}.
\end{proof}

Finally, we need the following technical lemma that controls error terms as $%
n\rightarrow \infty$.

\begin{lem}
Let $\eta $ be a probability measure on $\mathcal{L}^{(n)}$ such that $%
\widetilde{I}_{n}(\eta )\leq \frac{\sigma }{a_{n}},$ where $\sigma >0.$ Let $%
V=(K_{\epsilon }-I)\chi _{n}g$ where $g\in C(\mathbb{T}),\left\Vert
g\right\Vert _{\infty }\leq B.$ Then for any $t>0,$%
\begin{equation*}
\int_{_{\mathbb{T}}}Vd\eta \leq B[2h(t\sigma )+2\Delta
_{t}(n)+k_{t}(\epsilon )],
\end{equation*}%
where 
\begin{equation*}
h(l):=2\inf_{a>0}\frac{l+a-\log (1+a)}{a}
\end{equation*}%
and%
\begin{equation*}
\Delta _{t}(n)=\int_{_{\mathbb{T}}}\left\vert \chi _{n}\ast \widetilde{\mu }%
_{n}^{(\left\lfloor ta_{n}\right\rfloor )}(x)-\widetilde{g}_{t/\lambda
}(x)\right\vert dx,
\end{equation*}%
\begin{equation*}
k_{t}(\epsilon )=\sup_{y\in (-\frac{\epsilon }{2},\frac{\epsilon }{2}%
)^{d}}\int_{_{\mathbb{T}}}\left\vert \widetilde{g}_{t/\lambda }(x-y)-%
\widetilde{g}_{t/\lambda }(x)\right\vert dx.
\end{equation*}%
Moreover, we have that $h(l)\rightarrow 0$ as $l\rightarrow 0$ and, for any
fixed $t>0,$ $\Delta _{t}(n)\rightarrow 0$ as $n\rightarrow \infty $ and $%
k_{t}(\epsilon )\rightarrow 0$ as $\epsilon \rightarrow 0.$
\end{lem}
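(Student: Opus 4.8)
The strategy is to decompose $\int_{\mathbb{T}}V\,d\eta$ into an \emph{entropy part}, governed by the hypothesis $\widetilde{I}_{n}(\eta)\le\sigma/a_{n}$, and a \emph{local limit part}, governed by $\Delta_{t}(n)$ and $k_{t}(\epsilon)$. This follows \cite[Lemma 4.5]{Donsker1979}; the only substantive change is that in the local limit part the classical local limit theorem and Gaussian scaling are replaced by Theorem \ref{LocalLimit} and by the regularly varying matrices $B_{n}$. Write $P$ for the Markov operator of the projected walk, $Pf=\widetilde{\mu}_{n}\ast f$, set $m=\lfloor ta_{n}\rfloor$, and let $\eta P^{m}$ denote the law at time $m$ of that walk started from $\eta$, so that $\int\phi\,d(\eta P^{m})=\int P^{m}\phi\,d\eta$ for bounded $\phi$. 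From $\widetilde{I}_{n}(\eta)=-\inf_{u}\int_{\mathbb{T}}\log\frac{Pu}{u}\,d\eta\le\sigma/a_{n}$ one gets $\int\log\frac{Pu}{u}\,d\eta\ge-\widetilde{I}_{n}(\eta)$ for every positive $u$ on the (finite) lattice $\mathcal{L}^{(n)}$, and one records that $m\,\widetilde{I}_{n}(\eta)\le\lfloor ta_{n}\rfloor\,\sigma/a_{n}\le t\sigma$ while $l\mapsto h(l)$ is non-decreasing.

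For the entropy part I would establish that $\int\phi\,d\eta-\int\phi\,d(\eta P^{m})\le\|\phi\|_{\infty}\,h\bigl(m\,\widetilde{I}_{n}(\eta)\bigr)$ for every bounded $\phi$. The affine change $\phi\mapsto(\phi+\|\phi\|_{\infty})/(2\|\phi\|_{\infty})$ rescales both sides by a common positive factor (the additive constant drops since $\eta$ and $\eta P^{m}$ are probability measures), so it suffices to treat $0\le\phi\le1$ and to show $\int\phi\,d\eta-\int\phi\,d(\eta P^{m})\le\inf_{a>0}\frac{m\widetilde{I}_{n}(\eta)+a-\log(1+a)}{a}$. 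For fixed $a>0$, putting $u_{j}=P^{j}(1+a\phi)\ge1$, the telescoping identity $\log\frac{u_{m}}{u_{0}}=\sum_{j=1}^{m}\log\frac{Pu_{j-1}}{u_{j-1}}$ and $\int\log\frac{Pu_{j-1}}{u_{j-1}}\,d\eta\ge-\widetilde{I}_{n}(\eta)$ give $\int\log\frac{1+aP^{m}\phi}{1+a\phi}\,d\eta\ge-m\widetilde{I}_{n}(\eta)$; bounding the $\log(1+aP^{m}\phi)$ term from above by Jensen (through $\log(1+a\int P^{m}\phi\,d\eta)$), the $\log(1+a\phi)$ term from below through $\log(1+a)\int\phi\,d\eta$ (concavity of $s\mapsto\log(1+as)$ on $[0,1]$), and then using $\log(1+x)\le x$ together with $0\le\int\phi\,d\eta,\int\phi\,d(\eta P^{m})\le1$, one arrives at $a\int\phi\,d\eta\le m\widetilde{I}_{n}(\eta)+a\int\phi\,d(\eta P^{m})+a-\log(1+a)$, which is the claim. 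Taking $\phi=V$ and using $\|V\|_{\infty}=\|(K_{\epsilon}-I)(\chi_{n}\ast g)\|_{\infty}\le2\|\chi_{n}\ast g\|_{\infty}\le2\|g\|_{\infty}\le2B$ together with monotonicity of $h$, this yields $\int V\,d\eta-\int V\,d(\eta P^{m})\le2B\,h(t\sigma)$.

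For the local limit part, symmetry of $\mu$ gives $\int V\,d(\eta P^{m})=\int(\widetilde{\mu}_{n}^{(m)}\ast V)\,d\eta$; since the convolutions with $\widetilde{\mu}_{n}^{(m)}$, with $\chi_{n}$, and with $\psi_{\epsilon}$ all commute, $\widetilde{\mu}_{n}^{(m)}\ast V=(K_{\epsilon}-I)(g\ast\rho_{n})$ where $\rho_{n}:=\chi_{n}\ast\widetilde{\mu}_{n}^{(m)}$ is a probability density on $\mathbb{T}$ with $\|\rho_{n}-\widetilde{g}_{t/\lambda}\|_{1}=\Delta_{t}(n)$. Splitting $g\ast\rho_{n}=g\ast\widetilde{g}_{t/\lambda}+g\ast(\rho_{n}-\widetilde{g}_{t/\lambda})$, the $\eta$-integral of $(K_{\epsilon}-I)\bigl(g\ast(\rho_{n}-\widetilde{g}_{t/\lambda})\bigr)$ is at most $\|K_{\epsilon}-I\|_{\infty\to\infty}\,\|g\|_{\infty}\,\Delta_{t}(n)\le2B\Delta_{t}(n)$; and for $G:=g\ast\widetilde{g}_{t/\lambda}$ one has $|(K_{\epsilon}-I)G(x)|\le\sup_{|y|<\epsilon/2}|G(x-y)-G(x)|\le\|g\|_{\infty}\sup_{|y|<\epsilon/2}\|\widetilde{g}_{t/\lambda}(\cdot-y)-\widetilde{g}_{t/\lambda}\|_{1}=B\,k_{t}(\epsilon)$, so its $\eta$-integral is at most $B\,k_{t}(\epsilon)$. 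Hence $\int V\,d(\eta P^{m})\le B\,k_{t}(\epsilon)+2B\Delta_{t}(n)$, and adding the entropy bound gives $\int V\,d\eta\le B[\,2h(t\sigma)+2\Delta_{t}(n)+k_{t}(\epsilon)\,]$; applying the same argument to $-g$ gives the two-sided estimate used in bounding $\|K_{\epsilon}f-f\|_{1}$.

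Finally, the three limits: $h(l)\to0$ as $l\to0$ by taking $a=\sqrt{l}$ (so $\frac{l+\sqrt{l}-\log(1+\sqrt{l})}{\sqrt{l}}\to0$); $k_{t}(\epsilon)\to0$ as $\epsilon\to0$ by continuity of translations in $L^{1}(\mathbb{T})$ applied to the (smooth) density $\widetilde{g}_{t/\lambda}$; and $\Delta_{t}(n)\to0$ as $n\to\infty$ is where the operator-stable structure is used. Here the plan is to apply Theorem \ref{LocalLimit} along $m=\lfloor ta_{n}\rfloor$, giving $\sup_{x}|\det B_{m}|\cdot|\mu^{(m)}(x)-|\det B_{m}^{-1}|\,g(B_{m}^{-1}x)|\to0$, to combine it with the regular variation of $(B_{n})$ (Theorem \ref{RegularVariation}), which yields $B_{m}B_{\lfloor\lambda a_{n}\rfloor}^{-1}\to(t/\lambda)^{E}$, and with the operator-stable scaling $g_{t/\lambda}(x)=|\det(t/\lambda)^{E}|^{-1}g((t/\lambda)^{-E}x)$; with a routine tightness estimate these force $\chi_{n}\ast\widetilde{\mu}_{n}^{(m)}$ to converge in $L^{1}(\mathbb{T})$ to the periodization $\widetilde{g}_{t/\lambda}$ of $g_{t/\lambda}$, i.e.\ $\Delta_{t}(n)\to0$. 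This $L^{1}$ upgrade of the sup-norm local limit theorem (scaling the $m$-step kernel by $B_{\lfloor\lambda a_{n}\rfloor}^{-1}$ rather than by $B_{m}^{-1}$) is the main point requiring care; the remainder is the computation of \cite[Lemma 4.5]{Donsker1979} carried over essentially verbatim.
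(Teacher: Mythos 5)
Your argument is correct and reproduces the approach of \cite[Lemma 4.5]{Donsker1979}, which is exactly what the paper relies on (the paper defers entirely to that reference for this lemma). Both the entropy estimate via the telescoping $\log\frac{P^m(1+a\phi)}{1+a\phi}$ and the $\eta P^m$-decomposition into the $\Delta_t(n)$ and $k_t(\epsilon)$ terms via the local limit theorem match the cited proof, with the operator-stable scaling and regular variation of $(B_n)$ correctly substituting for the classical isotropic scaling in the $\Delta_t(n)\to 0$ step.
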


\begin{proof}
 See \cite[Lemma 4.5]{Donsker1979}
\end{proof}

With these five lemmas, the line of reasoning used in \cite[Theorem 5]%
{Donsker1979} gives us Theorem \ref{superexponential}.

\bigskip

\section{Appendix}

\subsection{Proof of regular variation properties}

In this subsection we deduce from the regular variation of the sequence $%
(B_{n})$ the properties of $\widetilde{F}$ stated in the technical
proposition \ref{Homogenous}. We follow \cite{Biskup2001} closely.

\begin{proof}[Proof of {\em Proposition \ref{Homogenous}}]

By Theorem \ref{RegularVariation} (i.e.,
\cite[Theorem 1.10.19 ]{Hazod2001}), under the convergence
assumption $
B_{n}^{-1}\mu ^{(n)}\Longrightarrow \eta $,
there exists a modified normalization sequence $(B_{n}^{\prime
})$ with $B'_n=B_{n}S_{n}$,  $S_{n}\in \mbox{Ivn}(\eta )$,
such that $(B_{n}^{\prime })$ has the regular variation property 
\begin{equation*}
B_{n}^{\prime }(B_{\left\lfloor nt\right\rfloor }^{\prime })^{-1}\rightarrow
t^{-E}
\end{equation*}%
where the convergence is uniform in $t$ on compact subsets of $%
\mathbb{R}
_{+}^{\times }.$ Since $S_{n}\in \mbox{Ivn}(\eta )$ we have \begin{equation*}
(B_{n}^{\prime })^{-1}\mu ^{(n)}\Longrightarrow \eta .\end{equation*}%
Further, since $\mbox{Inv}(\eta )$ is a
compact group, we must have $\det S_{n}=1,$ $\det B_{n}^{\prime }=\det B_{n}.$
Hence we can replace $B_{n}$ in the scaling assumption by $B_{n}^{\prime }$ and
 we have 
\begin{equation*}
\lim_{n\rightarrow \infty }\frac{a_{n}\det (B_{a_{n}}^{\prime })}{n}F\left( 
\frac{n}{\det (B_{a_{n}}^{\prime })}y\right) =\widetilde{F}(y),
\end{equation*}%
uniformly over compact sets in $(0,\infty ).$

Set%
\begin{equation*}
\widetilde{F}_{n}(y):=\frac{a_{n}\det (B_{a_{n}}^{\prime })}{n}F\left( \frac{%
n}{\det (B_{a_{n}}^{\prime })}y\right).
\end{equation*}%
The scaling assumption now reads $\lim_{n\rightarrow \infty }\widetilde{F}%
_{n}(y)=\widetilde{F}(y).$ Note that by assumption, $F:[0,\infty
)\rightarrow \lbrack 0,\infty )$ is a concave, increasing function with $%
F(0)=0,$ therefore both $\widetilde{F}_{n}$ and $\widetilde{F}$ are concave,
non-decreasing, not identically zero with value 0 at 0. Hence $\widetilde{F}%
_{n}$ and $\widetilde{F}$ are continuous and strictly positive in $(0,\infty
),$ and by concavity, $y\rightarrow \frac{\widetilde{F}_{n}(y)%
}{y}$ and $y\rightarrow \frac{\widetilde{F}(y)}{y}$ are both non-increasing
functions.

Now we show that for any $\lambda \in (0,1),$ $\frac{a_{\left\lfloor
\lambda n\right\rfloor }}{a_{n}}$ tends to a finite non-zero limit as $%
n\rightarrow \infty $ $.$ Fix a $y>0$ and write 
\begin{eqnarray*}
\widetilde{F}_{\left\lfloor \lambda n\right\rfloor }(y) &=&\frac{%
a_{\left\lfloor \lambda n\right\rfloor }\det (B_{a_{\left\lfloor \lambda
n\right\rfloor }}^{\prime })}{\left\lfloor \lambda n\right\rfloor }F\left( 
\frac{\left\lfloor \lambda n\right\rfloor }{\det (B_{a_{\left\lfloor \lambda
n\right\rfloor }}^{\prime })}y\right) \\
&=&\frac{a_{\left\lfloor \lambda n\right\rfloor }\det (B_{a_{\left\lfloor
\lambda n\right\rfloor }}^{\prime })}{\left\lfloor \lambda n\right\rfloor }%
F\left( \frac{n}{\det (B_{a_{n}}^{\prime })} \frac{\left\lfloor \lambda
n\right\rfloor /n}{\det (B_{a_{\left\lfloor \lambda n\right\rfloor
}}^{\prime })/\det (B_{a_{n}}^{\prime })}y\right) \\
&=&\frac{a_{\left\lfloor \lambda n\right\rfloor }}{a_{n}}\frac{\det
(B_{a_{\left\lfloor \lambda n\right\rfloor }}^{\prime })/\det
(B_{a_{n}}^{\prime })}{\left\lfloor \lambda n\right\rfloor /n} 
\widetilde{F}_{n}\left( \frac{\left\lfloor \lambda n\right\rfloor /n}{\det
(B_{a_{\left\lfloor \lambda n\right\rfloor }}^{\prime })/\det
(B_{a_{n}}^{\prime })}y\right) .
\end{eqnarray*}%

As $(a_{n})$ is an increasing
sequence and $\det (B_{n})$ is non-decreasing with respect to $n$, we have
$\det (B_{a_{\left\lfloor \lambda n\right\rfloor }}^{\prime })/\det
(B_{a_{n}}^{\prime })\leq 1$ 
 Since $%
y\rightarrow \frac{\widetilde{F}_{n}(y)}{y}$ is non-increasing, we have%
\begin{equation*}
\frac{\widetilde{F}_{n}\left( \frac{\left\lfloor \lambda n\right\rfloor /n}{%
\det (B_{a_{\left\lfloor \lambda n\right\rfloor }}^{\prime })/\det
(B_{a_{n}}^{\prime })}y\right) }{\frac{\left\lfloor \lambda n\right\rfloor /n%
}{\det (B_{a_{\left\lfloor \lambda n\right\rfloor }}^{\prime })/\det
(B_{a_{n}}^{\prime })}y}\leq \frac{\widetilde{F}_{n}(\frac{\left\lfloor
\lambda n\right\rfloor }{n} y)}{\frac{\left\lfloor \lambda
n\right\rfloor }{n} y}.
\end{equation*}%
Therefore%
\begin{equation*}
\widetilde{F}_{\left\lfloor \lambda n\right\rfloor }(y)\leq \frac{%
a_{\left\lfloor \lambda n\right\rfloor }}{a_{n}} \frac{\widetilde{F}%
_{n}\left( \frac{\left\lfloor \lambda n\right\rfloor }{n} y\right) }{%
\frac{\left\lfloor \lambda n\right\rfloor }{n}}.
\end{equation*}%
Letting $n\rightarrow \infty $ on both sides, the scaling assumption yields 
\begin{equation*}
\widetilde{F}(y)\leq \lim \inf_{n\rightarrow \infty }\frac{a_{\left\lfloor
\lambda n\right\rfloor }}{a_{n}}\frac{1}{\lambda }\widetilde{F}%
(\lambda y).
\end{equation*}%
Therefore 
\begin{equation*}
\lim \inf_{n\rightarrow \infty }\frac{a_{\left\lfloor \lambda n\right\rfloor
}}{a_{n}}\geq \frac{\lambda \widetilde{F}(y)}{\widetilde{F}(\lambda y)}.
\end{equation*}

\bigskip

Since $(a_{n})$ is an increasing sequence by assumption, we have 
\begin{equation*}
\frac{\lambda \widetilde{F}(y)}{\widetilde{F}(\lambda y)}\leq \lim
\inf_{n\rightarrow \infty }\frac{a_{\left\lfloor \lambda n\right\rfloor }}{%
a_{n}}\leq \lim \sup_{n\rightarrow \infty }\frac{a_{\left\lfloor \lambda
n\right\rfloor }}{a_{n}}\leq 1.
\end{equation*}

Replacing $\lambda $ by $\frac{1}{\lambda },$ we conclude that for all $%
\lambda \in (0,\infty ),$ $\frac{a_{\left\lfloor \lambda n\right\rfloor }}{%
a_{n}}$ is uniformly bounded away from 0 and uniformly bounded from above.

\bigskip

Let $\phi (\lambda )$ be defined for each $\lambda \in (0,\infty )$ as a
sub-sequential limit of $\frac{a_{\left\lfloor \lambda n\right\rfloor }}{a_{n}%
}.$ Namely, choose some ($\lambda $-dependent) sub-sequence $t_{n}\rightarrow
\infty $ and  set $\phi (\lambda )=\lim_{n\rightarrow \infty }\frac{%
a_{\left\lfloor \lambda t_{n}\right\rfloor }}{a_{t_{n}}}.$ From the above
reasoning we know that $\phi (\lambda )\in (0,\infty ).$ Consider the equation%
\begin{equation*}
\widetilde{F}_{\left\lfloor \lambda n\right\rfloor }(y)=\frac{%
a_{\left\lfloor \lambda n\right\rfloor }}{a_{n}} \frac{\det
(B_{a_{\left\lfloor \lambda n\right\rfloor }}^{\prime })/\det
(B_{a_{n}}^{\prime })}{\left\lfloor \lambda n\right\rfloor /n} 
\widetilde{F}_{n}\left( \frac{\left\lfloor \lambda n\right\rfloor /n}{\det
(B_{a_{\left\lfloor \lambda n\right\rfloor }}^{\prime })/\det
(B_{a_{n}}^{\prime })}y\right) ,
\end{equation*}%
and take the limit along the sub-sequence $(t_{n}).$ 
We can indeed take the limit on the right hand side of the equation because 
$\widetilde{F}$ is continuous, and the convergences   $\widetilde{F}%
_{n}(y)\rightarrow \widetilde{F}(y)$ and $B_{n}^{\prime }(B_{\left\lfloor
nt\right\rfloor }^{\prime })^{-1}\rightarrow t^{-E}$ are uniform over
compact sets. This yields 
\begin{equation*}
\widetilde{F}(y)=\phi (\lambda ) \frac{\phi (\lambda )^{\tra E}}{\lambda }%
 \widetilde{F}\left( \frac{\lambda }{\phi (\lambda )^{\tra E}}y\right) .
\end{equation*}%
Note that the function 
$$z\rightarrow \frac{z^{\tra E}}{\lambda } 
\widetilde{F}\left( \frac{\lambda }{z^{\tra E}}y\right) $$ is non-decreasing
because $y\rightarrow \frac{\widetilde{F}(y)}{y}$ is non-increasing. As $%
z\rightarrow \frac{\widetilde{F}(y)}{z}$ is strictly decreasing, the solution
$z_0=z(\lambda,y)$ to 
\begin{equation*}
\frac{\widetilde{F}(y)}{z}=\frac{z^{\tra E}}{\lambda }\cdot \widetilde{F}\left( 
\frac{\lambda }{z^{\tra E}}y\right)
\end{equation*}%
is unique. Hence the limit $\phi (\lambda )=\lim_{n\rightarrow \infty }\frac{%
a_{\left\lfloor \lambda n\right\rfloor }}{a_{n}}$ exists in $(0,\infty )$
for all $\lambda \in (0,\infty ).$

Observe that  
\begin{eqnarray*}
\phi (\lambda _{1}\lambda _{2}) &=&\lim_{n\rightarrow \infty }\frac{%
a_{\left\lfloor \lambda _{1}\lambda _{2}n\right\rfloor }}{a_{n}} \\
&=&\lim_{n\rightarrow \infty }\frac{a_{\left\lfloor \lambda _{1}\lambda
_{2}n\right\rfloor }}{a_{\left\lfloor \lambda _{2}n\right\rfloor }}\cdot 
\frac{a_{\left\lfloor \lambda _{2}n\right\rfloor }}{a_{\left\lfloor
n\right\rfloor }} \\
&=&\phi (\lambda _{1})\phi (\lambda _{2}).
\end{eqnarray*}%
Therefore $\phi$ is multiplicative and $
\phi (\lambda )=\lambda ^{\kappa }$
with $\kappa =\log _{2}\phi (2).$ Plugging this back in 
\begin{equation*}
\widetilde{F}(y)=\phi (\lambda )\cdot \frac{\phi (\lambda )^{\tra E}}{\lambda }%
\cdot \widetilde{F}\left( \frac{\lambda }{\phi (\lambda )^{\tra E}}y\right) ,
\end{equation*}%
we have%
\begin{equation*}
\widetilde{F}(y)=\lambda ^{\kappa }\cdot \frac{\lambda ^{\kappa \tra E}}{%
\lambda }\cdot \widetilde{F}\left( \frac{\lambda }{\lambda ^{\kappa \tra E}}%
y\right) .
\end{equation*}%
Setting $y=1$ gives
\begin{equation*}
\widetilde{F}(1)=\lambda ^{\kappa +\kappa \tra E-1}\cdot \widetilde{F}(\lambda
^{1-\kappa \tra E}),
\end{equation*}%
so that
\begin{equation*}
\widetilde{F}(y)=\widetilde{F}(1)y^{\frac{1-\kappa \tra E-\kappa }{1-\kappa \tra E}%
}.
\end{equation*}

The fact that
\begin{equation*}
\lim_{n\rightarrow \infty }\frac{\log a_{n}}{\log n}=\kappa ,
\end{equation*}%
follows exactly from the reasoning in \cite{Biskup2001}. 
\end{proof}

\subsection{Discussion of the constant $k(\protect\eta,\widetilde{F})$ of
Lemma \protect\ref{Constant}}

In this subsection, we follow the truncation argument in \cite{Donsker1975}
to prove Lemma \ref{Constant}. With the notation of Section \ref{sec-DV},
let 
\begin{equation*}
J:=\sup_{\lambda >0}\inf_{f\in \mathcal{F}_{\mathbb{T}}}\left\{ \lambda ^{-1}%
\mathcal{E}_{\widetilde{\eta }}\left( \sqrt{f},\sqrt{f}\right) +\lambda
^{(1-\gamma )\tra E}\int_{\mathbb{T}}\widetilde{F}(f(x))dx\right\}
\end{equation*}%
be the upper bound appearing in Proposition \ref{UpperBound}. Let $\epsilon
>0$ be an arbitrary small number. To prove Lemma \ref{Constant}, it suffices
to find $\Omega \in \mathcal{G}$ and $g\in \mathcal{F}_{\Omega }$ such that 
\begin{equation*}
\mathcal{E}_{\eta }\left( \sqrt{g},\sqrt{g}\right) +\int_{\Omega }\widetilde{%
F}(g(x))dx\leq J+\epsilon .
\end{equation*}%
For any $\lambda $ (we will choose $\lambda$ large enough later on), by the
definition of $J$, there exists $f\in \mathcal{F}_{\mathbb{T}}$ such that 
\begin{equation*}
\lambda ^{-1}\mathcal{E}_{\widetilde{\eta }}(\sqrt{f},\sqrt{f})+\lambda
^{(1-\gamma )\tra E}\int_{\mathbb{T}}\widetilde{F}(f(x))dx<J+\frac{\epsilon 
}{2}.
\end{equation*}%
We can think of functions on $\mathbb{T}$ also as functions on the
fundamental domain $[0,1)^{d}.$ Following \cite[Lemma 3.4]{Donsker1975}, let 
\begin{equation*}
E^{\lambda }=\bigcup _{i=1}^{d}\left(\left\{0\leq x_{i}\leq \frac{1}{\sqrt[4]%
{\lambda }}\right\}\bigcup \left\{1-\frac{1}{\sqrt[4]{\lambda }}\leq
x_{i}<1\right\}\right).
\end{equation*}%
Note that there exists $a\in \mathbb{T}$ such that the translated function $%
f_{a}(x)=f(x-a) $ satisfies%
\begin{equation*}
\int_{E^{\lambda }}f_{a}dx\leq \frac{2d}{\sqrt[4]{\lambda }}.
\end{equation*}%
Because of translation invariance of the expression on the torus, we can
replace $f$ by $f_{a}$ in the expression without changing the value.
Therefore we may assume that $f\in \mathcal{F}_{\mathbb{T}}$ satisfies 
\begin{equation}  \label{cruxf}
\lambda ^{-1}\mathcal{E}_{\widetilde{\eta }}(\sqrt{f},\sqrt{f})+\lambda
^{(1-\gamma )\tra E}\int_{\mathbb{T}}\widetilde{F}(f(x))dx<J+\frac{\epsilon 
}{2},
\end{equation}%

and%
\begin{equation*}
\int_{E^{\lambda }}fdx\leq \frac{2d}{\sqrt[4]{\lambda }}.
\end{equation*}

Consider a smooth bump function $\phi _{0}$ on $\mathbb{R}$ such that $\phi _{0}=1$ on $[\frac{1}{\sqrt[4]{\lambda}},1-\frac{1}{\sqrt[4]{\lambda}}]$, it vanishes outside $\left(\frac{1}{2\sqrt[4]{\lambda}},1-\frac{1}{2\sqrt[4]{\lambda}}\right)$, and $\left\vert
\bigtriangledown \phi _{0}\right\vert \leq 3 \sqrt[4]%
{\lambda }$.
Let $\widetilde{\phi _{0}}(x_{1},...,x_{d})=\phi _{0}(x_{1})...\phi
_{0}(x_{d})$ and $\psi (x)=\widetilde{\phi _{0}}(x)^{2}.$ Then $\left\Vert
\bigtriangledown \widetilde{\phi _{0}}\right\Vert \leq 3 \sqrt{d}\cdot \sqrt[4]%
{\lambda }.$

Let $T_{\lambda }:=\lambda ^{E}\left( [0,1)^{d}\right) $, that is the image
of the fundamental domain $[0,1)^{d}$ under the transformation $\lambda
^{E}. $ Given a function $h$ defined on $[0,1)^{d},$ let $h_{\lambda }$ be
the function on $T_{\lambda }$ defined by 
\begin{equation*}
h_{\lambda }(x):=\left\vert \det (\lambda ^{-E})\right\vert h(\lambda
^{-E}x).
\end{equation*}

Now, set $\Omega =\lambda ^{E}(0,1)^{d}$ where $\lambda $ is sufficiently
large and 
\begin{equation*}
g(x):=\frac{(f\psi )_{\lambda }(x)}{\int_{\mathbb{R}^{d}}(f\psi )_{\lambda
}(x)dx}.
\end{equation*}%
Then, we claim that 
\begin{equation*}
\mathcal{E}_{\eta }\left( \sqrt{g},\sqrt{g}\right) +\int_{\Omega}\widetilde{F%
}(g(x))dx\leq J+\epsilon .
\end{equation*}%
To see this, first note that $g$ is supported on $T_{\lambda }$ and that, by
the scaling properties $t^{E}(W)=t\cdot W$ of the L\'{e}vy measure, we have 
\begin{equation*}
\mathcal{E}_{\eta }\left( \sqrt{g},\sqrt{g}\right) \leq \lambda ^{-1}%
\mathcal{E}_{\widetilde{\eta }}\left( \sqrt{g_{1/\lambda }},\sqrt{%
g_{1/\lambda }}\right) .
\end{equation*}%
Also, since $\widetilde{F}(y)=\widetilde{F}(1)y^{\gamma }$, we have 
\begin{equation*}
\int_{\Omega}\widetilde{F}(g(x))dx=\lambda ^{(1-\gamma )\tra E}\int_{\mathbb{%
T}}\widetilde{F}(g_{1/\lambda }(x))dx.
\end{equation*}%
Hence we obtain 
\begin{eqnarray*}
\lefteqn{\mathcal{E}_{\eta }\left( \sqrt{g},\sqrt{g}\right) +\int_{\Omega}%
\widetilde{F}(g(x))dx} \\
&\leq &\lambda ^{-1}\mathcal{E}_{\widetilde{\eta }}\left( \sqrt{g_{1/\lambda
}},\sqrt{g_{1/\lambda }}\right) +\lambda ^{(1-\gamma )\tra E}\int_{\mathbb{T}%
}\widetilde{F}(g_{1/\lambda }(x))dx.
\end{eqnarray*}

Since $f$ satisfies (\ref{cruxf}), the choice of the bump function and the
fact that real part of the eigenvalues of $E$ are all $\geq \frac{1}{2}$
guarantee that for $\lambda $ sufficiently large, 
\begin{equation*}
\lambda ^{-1}\mathcal{E}_{\widetilde{\eta }}\left( \sqrt{g_{1/\lambda }},%
\sqrt{g_{1/\lambda }}\right) +\lambda ^{(1-\gamma )\tra E}\int_{\mathbb{T}}%
\widetilde{F}(g_{1/\lambda }(x))dx<J+\epsilon .
\end{equation*}

\subsection{Explicit computation of constants}

In this section, we illustrate Theorem \ref{ReturnAsymptotic} 
concerning switch-walk-switch random walks on certain wreath products $K\wr H$
and give some indications concerning exact the computation 
of the constant $k(\eta,\widetilde{F}_K)$.
Let $\nu $ denote a symmetric probability measure
on the lamp-group $K$ and let $F:(0,\infty)\ra (0,\infty)$ be such that $F(n)=
-\log \nu^{(2n)}(e_K)$. 
Assume that $%
\mu $ is a symmetric measure on the  base-group $H=\mathbb{Z}^{d}$ satisfying
the convergence assumption 
\begin{equation*}
n^{-E}\mu ^{(n)}\Longrightarrow \eta ,
\end{equation*}
that is, $\mu$ is in the domain of normal attraction of $\eta$ where $\eta$ is 
an operator-stable law with exponent $E$.
Set
\begin{equation*}
\hat{\eta}=e^{-\Theta}\;\mbox{ and } \widehat{L_\Theta}f= \Theta \hat{f}.
\end{equation*}%
We will treat cases where $F$ satisfies the scaling assumption (S-$n^E$-$a_n$)
for some sequence $a_n$. Let  $\widetilde{F}$ be the 
corresponding limit function  and $\gamma$ be the associated scaling exponent.
See Definition \ref{def-scal} and Proposition \ref{Homogenous}.

Let $q$ be the switch-walk-switch measure on $G=K\wr \mathbb{Z}^{d}$ given
by $q=\nu \ast \mu \ast \nu $.

\begin{exa}[Power decay on $K$, $\gamma =0$] 
This is the continuation of Example \ref{LampPowerDecay}.
Namely, assume
$cn^{-\theta }\leq \nu ^{(2n)}(e_{K})\leq Cn^{-\theta }$.
This means  that $|F(y) - \theta \log y|\le C'$ and 
$\widetilde{F}(y)=\theta \mathbf 1_{(0,\infty)}(y)$, that is, $\gamma=0$.
Under this hypothesis, Theorem \ref{ReturnAsymptotic} yields 
\begin{equation*}
\frac{1}{(2n)^{\tra E/(\tra E+1)}(\log 2n)^{1/(\tra E+1)}}\log
q^{(2n)}(e)=-k(\eta, \theta\mathbf 1_{(0,\infty)} ).
\end{equation*}%
A simple scaling argument as in \cite{Donsker1975,Donsker1979} 
shows that the constant 
$k(\eta, \theta\mathbf 1_{(0,\infty)} )$ 
as in
Lemma \ref{Constant} can be written as%
\begin{equation}
k(\eta, \theta\mathbf 1_{(0,\infty)} ) =\theta ^{1/(\tra E+1)}(\tra E+1)\left( \frac{\lambda
_{1}(\Theta )}{\tra E}\right) ^{\tra E/(\tra E+1)}.
\label{thetaconstant}
\end{equation}%
Here 
\begin{equation*}
\lambda _{1}(\Theta )=\inf_{B:\left\vert B\right\vert =1}\lambda
_{1}(\Theta ,B)
\end{equation*}%
where the infimum is taken over all bounded open sets $B\subset \mathbb{R}^{d}$ 
such that the Lebesgue measure $\left\vert B\right\vert =1,\left\vert \partial
B\right\vert =0,$ and $\lambda _{1}(\Theta ,B)$ is the principle
eigenvalue for $L_{\Theta }$ with Dirichlet boundary on $B$. 
\end{exa}

\begin{exa}[Nonamenable $K$, $\gamma =1$]
Suppose the lamp group $K$ is nonamenable and 
let $\rho $ denote the spectral radius of $\nu$ so that
\begin{equation*}
\left( \nu ^{(2n)}(e_{K})\right) ^{\frac{1}{2n}}\rightarrow \rho .
\end{equation*}%
By \cite[Theorem 3.16]{Pittet2002}, the 
switch-walk-switch measure $q=\nu \ast \mu \ast \nu $  on 
$G=K\wr \mathbb{Z}^{d}$ has spectral radius $%
\rho ^{2}$, namely, 
\begin{equation*}
\left( q^{(2n)}(\mathbf{e})\right) ^{\frac{1}{2n}}\rightarrow \rho ^{2}.
\end{equation*}%
 
We can recover this result using Theorem \ref{ReturnAsymptotic} 
(actually, a rather trivial special case).
We have  
$$F(y)= (\log\rho^2) y +o(y) \mbox{ and }\widetilde{F}= (\log \rho^2) y.$$ 
The variational problem giving the constant $k(\eta,\widetilde{F})$ becomes 
\begin{eqnarray*}
k(\eta,\widetilde{F}) 
&=&\log \rho^2 +\inf \{\mathcal{E}_{\eta }(f,f):f\geq 0,\left\Vert
f\right\Vert _{2}=1\} \\
&=&\log \rho^2 .
\end{eqnarray*}

Further, if a more precise  local limit theorem is known for the random walk 
driven by $\nu $ on $K$, we can derive a log-limit for 
$\rho^{-4n}q^{(2n)}(\mathbf{e})$. 
For example, assume that
\begin{equation*}
\nu ^{(2n)}(e_{K})\sim c(\nu )n^{-\theta }\rho ^{2n},
\end{equation*}%
for some $\theta>0$. Then we have 
\begin{equation*}
\log \left( \rho ^{-4n}q^{(2n)}(\mathbf{e})\right) \sim -k(\eta, \theta
\mathbf 1_{(0,\infty)}
)(2n)^{\tra E/(\tra E+1)}(\log 2n)^{1/(\tra E+1)}
\end{equation*}
where the constant $k(\eta,\theta \mathbf 1_{(0,\infty)})$ is given by ($\ref{thetaconstant}$).
\end{exa}

\begin{exa}[Cases when $\gamma\in (0,1)$]
The case when $\log \nu ^{(2n)}(e_{K})\sim -cn^{\gamma }$, 
$\gamma \in (0,1)$, 
 presents two different difficulties. First, there are very few examples of group $K$ for which such asymptotics is known (even so, we do produce such examples 
above).  Second, the corresponding variational problem describing 
$k(\eta, s\mapsto c s^\gamma)$ 
is not as well studied and explicit solutions 
are not known except for certain cases. See \cite{Schmidt} where is $\gamma$ is half of our $\gamma$. 

Assume that $K,\nu$ are such that $\log \nu ^{(2n)}(e_{K})\sim -cn^{\gamma }$, 
$\gamma \in (0,1)$ so that $\widetilde{F}(y)=cy^\gamma$. Assume further that
  $H=\mathbb{Z}$ and $\mathcal{E%
}_{\eta }\left( f,f\right) =a\int_{\mathbb{R}}\left\vert \nabla f\right\vert
^{2}dx$. In this case,  \cite[Proposition 5.1]{Schmidt} 
provides an explicit solution for the variational problem describing the constant $k(\eta,\widetilde{F})$ and one obtains
\begin{equation*}
\log q^{(2n)}(\mathbf{e})\sim -
k(\eta ,\widetilde{F})
(2n)^{\frac{1+\gamma }{3-\gamma }%
}
\end{equation*}%
where 
\begin{equation*}
k(\eta ,\widetilde{F})=
c^{\frac{2}{3-\gamma }}(2a)^{\frac{1-\gamma }{3-\gamma }%
}\left( \frac{3-\gamma }{1+\gamma }\right) \left( \frac{\sqrt{\pi }\Gamma
\left( \frac{3-\gamma }{2-2\gamma }\right) }{\Gamma \left( \frac{1}{1-\gamma 
}\right) }\right) ^{\frac{2-2\gamma }{3-\gamma }},
\end{equation*}%
and the minimizer is the function $(\cos \left\vert x\right\vert )^{\frac{1}{%
1-\gamma }}\boldsymbol{1}_{[0, \pi /2]}(|x|)$,
properly dilated and normalized. 

When $H=\mathbb Z^d$, $\mathcal{E%
}_{\eta }\left( f,f\right) =a\int_{\mathbb{R}^d}\left\vert \nabla f\right\vert
^{2}dx$,  and $\gamma $ is $1/2$ --- for example, this is
achieved by the switch-walk-switch random walk on the wreath product 
$\mathbb{Z}_{2}\wr \mathbb{Z}%
^{2}$ --- the minimizer is given by a Bessel function and the constant
$k(\eta,s\mapsto cs^{1/2})$ is
explicitly computable, see \cite[Proposition 5.2]{Schmidt}.
\end{exa}


\providecommand{\bysame}{\leavevmode\hbox to3em{\hrulefill}\thinspace} %
\providecommand{\MR}{\relax\ifhmode\unskip\space\fi MR } 
\providecommand{\MRhref}[2]{  \href{http://www.ams.org/mathscinet-getitem?mr=#1}{#2}
} \providecommand{\href}[2]{#2}

\end{document}